\numberwithin{equation}{section}
\numberwithin{equation}{section}
\newtheorem{proposition}{Proposition}[section]
\newtheorem{lemma}[proposition]{Lemma}
\newtheorem{corollary}[proposition]{Corollary}
\newtheorem{theorem}[proposition]{Theorem}
\newenvironment{customthm}[1]
  {\innercustomthm}
  {\endinnercustomthm}
\theoremstyle{definition}
\newtheorem{definition}[proposition]{Definition}
\theoremstyle{remark}
\newtheorem{remark}[proposition]{Remark}
\newcommand{\arxiv}[1]{\href{http://arxiv.org/abs/#1}{\tt arXiv:\nolinkurl{#1}}}
\newcommand{\Rmnum}[1]{\expandafter\@slowromancap\romannumeral #1@}
\def \g{\mathfrak{g}}
\def \dm{\diamond}
\def \N{\mathbb{N}}
\def \Q{\mathbb{Q}}
\def \Z{\mathbb{Z}}
\def \I{\mathbb{I}}
\def \Br{\mathrm{Br}}
\def \bP{\mathbf{P}}
\def \bF{\mathbb{F}}
\def \bc {\mathbf{c}}
\def \bd {\mathbf{d}}
\def \fg{\mathfrak{g}}
\def \A{\mathcal{A}}
\def \fX{\Upsilon}
\def \wI{\I_{\circ}}
\def \wItau{\I_{\circ,\tau}}
\def \bI{\I_{\bullet}}
\def \diag{\mathrm{diag}}
\def \gr{\mathrm{gr}}
\def \tfX{\widetilde{\Upsilon}}
\def \cR{\mathcal{R}}
\def \bs{\mathbf{r}} 
\def \bF{\mathbb{F}}
\def \bw{w_\bullet}
\def \bwi{w_{\bullet,i}}
\def \bW{W_{\bullet}}
\def \bbw{{\boldsymbol{w}}}
\def \tbU{\tU_{\bullet}}
\def \ba{\mathbf{a}}
\def \tk{k}
\def \tT{\widetilde{\mathscr T}}
\def \tTD{\widetilde{T}}
\def \bT{\mathbf{T}}
\def \Id{\mathrm{Id}}
\newcommand{\U}{\mathbf{U}}
\newcommand{\tK}{K}
\newcommand{\tU}{\widetilde{{\mathbf U}} }
\newcommand{\tUi}{\widetilde{{\mathbf U}}^\imath}
\newcommand{\Aut}{\operatorname{Aut}\nolimits}
\newcommand{\qbinom}[2]{\begin{bmatrix} #1\\#2 \end{bmatrix} }
\def \ov{\overline}
\def \un{\underline}
\newcommand{\nc}{\newcommand}
\nc{\greentext}[1]{\textcolor{green}{#1}}
\nc{\redtext}[1]{\textcolor{red}{#1}}
\nc{\bluetext}[1]{\textcolor{blue}{#1}}
\nc{\brown}[1]{\browntext{ #1}}
\nc{\green}[1]{\greentext{ #1}}
\nc{\red}[1]{\redtext{ #1}}
\nc{\blue}[1]{\bluetext{ #1}}
\def \Hom{\mathrm{Hom}}
\def \Q {\mathbb Q}
\def \TT{\widetilde{\mathbf T}}
\def \bvs{{\boldsymbol{\varsigma}}}
\def \vs{\varsigma}
\def \U{\mathbf U}
\def \Ui{\mathbf{U}^\imath}
\def \reW{W^\circ}
\newcommand{\ev}{\bar{0}}
\newcommand{\odd}{\bar{1}}
\begin{document}

\title[PBW bases for $\imath$quantum groups]{PBW bases for $\imath$quantum groups
}

	\author[Ming Lu]{Ming Lu}
	\address{Department of Mathematics, Sichuan University, Chengdu 610064, P.R.China}
	\email{luming@scu.edu.cn}

\author[Ruiqi Yang]{Ruiqi Yang}
\address{Department of Mathematics, Sichuan University, Chengdu 610064, P.R.China}
\email{yangruiqi@stu.scu.edu.cn}

	\author[Weinan Zhang]{Weinan Zhang}
	\address{Department of Mathematics and New Cornerstone
Science Laboratory, The University of Hong Kong, Hong Kong SAR, P.R.China}
	\email{mathzwn@hku.hk}

\subjclass[2020]{Primary 17B37.}

\keywords{Quantum groups, quantum symmetric pairs, PBW bases, braid group actions}

\begin{abstract}
We establish PBW type bases for $\imath$quantum groups of arbitrary finite type, using the relative braid group symmetries.  Explicit formulas for root vectors are provided for $\imath$quantum groups of each rank 1 type. We show that our PBW type bases give rise to integral bases for the modified $\imath$quantum groups. The leading terms of our bases can be identified with the usual PBW bases in the theory of quantum groups. 
\end{abstract}

\maketitle

\setcounter{tocdepth}{1}

\tableofcontents

\section{Introduction}

\subsection{Backgrounds}


Introduced by Lusztig, the PBW bases have played an essential role in the theory of quantum groups $\U$. These bases are constructed using the braid group symmetries (see \cite[Chapter 8]{Jan96}), and have applications in the algebraic construction of the canonical bases \cite{Lus90b,Lus91}. Let $\A=\Z[q,q^{-1}]$ and $\U^-_\A$ be Lusztig's integral form for the half $\U^-$ of $\U$ \cite[3.1.13]{Lus93}. Lusztig showed that the PBW bases are integral bases for $\U^-_\A$, and give rise to integral bases for the modified quantum group $\dot{\U}$ \cite{Lus93}.

Let $\theta$ be an involution on a Lie algebra $\g$. It is well known that irreducible symmetric pairs $(\fg,\fg^\theta)$ are classified by Satake diagrams $(\I =\I_\circ \cup \I_\bullet, \tau)$; here $\I =\I_\circ \cup \I_\bullet$ is a partition of vertices in the underlying Dynkin diagram and $\tau$ is a diagram involution.
As a quantization of symmetric pairs, Letzter \cite{Le99} introduced quantum symmetric pairs ($\U$,$\Ui$).  
The $\imath$quantum groups $\Ui=\Ui_\bvs$ are right coideal subalgebras of $\U$ defined using the Satake datum depending on parameters $\bvs=(\varsigma_i)_{i\in\I_\circ}$.  
Motivated by the Hall algebra realization of $\imath$quantum groups \cite{LW19a}, a universal $\imath$quantum group $\tUi$ is defined as a coideal subalgebra of the Drinfeld double quantum group $\tU$, and the $\imath$quantum group $\Ui_\bvs$ with parameters can be recovered from $\tUi$ by a central reduction; see \cite{LW19a,WZ23}. We call $\Ui$ (or $\tUi$) quasi-split if $\I_\bullet =\emptyset$.

In the theory of quantum symmetric pairs, the quasi $K$-matrix was introduced in \cite[\S2.3]{BW18a} for $\Ui_\bvs$ with some constraints on parameters $\bvs$; a proof for greater generality was given in \cite{BK19}. In \cite{AV22, Ko21}, the authors removed the constraints on parameters and constructed the quasi $K$-matrix for general parameters; their results were reformulated for $\Ui_\bvs$ and adapted to $\tUi$ in \cite{WZ23}. The quasi $K$-matrix has been shown to be crucial in the construction of $\imath$canonical bases \cite{BW18a,BW21} and in the construction of relative braid group symmetries \cite{WZ23} for $\imath$quantum groups.
 
It was conjectured by Kolb and Pellegrini in \cite{KP11} that there exist relative braid group symmetries on the $\imath$quantum group $\Ui$ associated to the relative Weyl group of the underlying symmetric pair. For mostly quasi-split finite type, they {\em loc. cit.} constructed such symmetries with the help of computer calculations. Dobson formulated relative braid group symmetries for general type AIII and AIV in \cite{D19}. Recently, via a new approach using quasi $K$-matrices, Wang and the third author \cite{WZ23} constructed relative braid group symmetries on $\imath$quantum groups of arbitrary finite type, as well as compatible symmetries on finite-dimensional $\Ui$-modules. This construction has been generalized to the quasi-split Kac-Moody setting in \cite{Z23}. 

In \cite{LW19a}, the first author and Wang developed $\imath$Hall algebras of $\imath$quiver algebras to realize the universal quasi-split $\imath$quantum groups $\tUi$ (of finite type). The BGP type reflection functors are defined for $\imath$quiver algebras in \cite{LW21a}, which are used to obtain relative braid group symmetries $\TT_i$ on $\tUi$. For a class of quasi-split type ADE, the reflection functors can be used to define the $q$-root vectors in $\tUi$, and these $q$-root vectors lead to a natural PBW type basis for $\tUi$ and $\Ui$; see \cite{LW21a}. 

Since the relative braid group symmetries are now available for arbitrary finite type, it is a natural to construct PBW type bases for $\imath$quantum groups of arbitrary finite type.

On the other hand, 
Bao and Wang introduced in \cite{BW18b} the modified $\imath$quantum group $\dot{\U}^\imath$ and its integral $\A$-form $\dot{\U}^\imath_\A$, analogous to the construction of modified quantum groups in \cite{Lus93}.
In the same paper, they constructed an $\imath$-canonical basis for $\dot{\U}^\imath_\A$ with certain restrictions (see \cite[Section 3.4]{BW18b}) on the parameters $\bvs$; then many nice properties of $\dot{\U}^\imath_\A$, such as the freeness, follow from the existence of the $\imath$-canonical basis. It is natural to ask if these properties still hold for a larger set of parameters.


\subsection{Goal}
\label{subsec:goal}
The goal of this paper is to construct PBW type bases for $\imath$quantum groups $\tUi$ and $\Ui$ of arbitrary finite type and the integral form of modified $\imath$quantum groups $\dot{\U}^\imath$. The basic tools are the relative braid group symmetries and $\imath$-divided powers. 


\subsection{PBW bases for $\imath$quantum groups over $\Q(q)$}
\label{sec:intro2}
In the first part, we focus on constructing the PBW type bases for $\tUi$ over the field $\Q(q)$. The PBW type bases for $\Ui$ are obtained by a central reduction. 

 We briefly review the PBW bases for quantum groups. Let $W=\langle s_i\mid i\in\I\rangle$ be the Weyl group for $\g$ with the length function $\ell(\cdot)$ and the longest element $w_0$. Set $\cR^+$ to be the set of positive roots and $\cR^+(w)=\cR^+\cap w(\cR^-)$ for any $w\in W$. Let $\tTD_i$ be a variant of Lusztig's symmetries on $\tU$; see Proposition \ref{prop:braid1}. Using these braid group symmetries and a reduced expression of $w_0$, one can define root vectors $F_\beta\in\U^-(=\tU^-)$ for $\beta\in\cR^+$, and these root vectors lead to a PBW basis for $\U^-$; see Proposition~\ref{prop:PBW-QG}. 
 

Let $ (\I =\I_\circ \cup \I_\bullet, \tau)$ be a Satake diagram and $W^\circ=\langle \bs_i\mid i\in \wItau\rangle$ be the associated relative Weyl group. Set $W_\bullet=\langle s_i\mid i\in\I_\bullet\rangle$. The semidirect product $W_\bullet \rtimes W^\circ$ is identified with a subgroup of $W$, and the longest element $w_0$ of $W$ is identified with $\bbw_0w_\bullet$ in $W_\bullet \rtimes W^\circ$, where $\bbw_0 $ and $w_\bullet$ are the longest elements of $W^\circ$ and $W_\bullet$ respectively; cf. \cite{DK19}. We show in Lemma~\ref{lem:positive-roots-rankone} that $\cR^+$ is the disjoint union of two sets
\begin{align}\label{eq:intro1}
 \cR^+(\bbw_0)  \text{ and } \cR^+_\bullet,
\end{align}
where $\cR^+_\bullet$ is the positive system of $\I_\bullet$. 

Due to the natural decomposition \eqref{eq:intro1}, we will construct root vectors $B_\beta$ for $\beta\in \cR^+(\bbw_0)$ and $F_\gamma,E_\gamma$ for $\gamma\in \cR^+_\bullet$ separately. The monomials of all these root vectors for a fixed order will eventually provide the PBW type basis for $\tUi$.

By \cite[Theorem 4.2]{BW18b}, Lusztig's braid group symmetries $\widetilde{T}_i$ for $i\in\I_\bullet$ restrict to $\tUi$. 
Then $F_\gamma$, $E_\gamma$ can be constructed in the same way as the case of quantum groups using  $\widetilde{T}_i$ for $i\in\I_\bullet$. 

The nontrivial part is the construction of root vectors $B_{\beta}$ for $\beta\in  \cR^+(\bbw_0) $. In this case, Lusztig symmetries $\widetilde{T}_i$ for $i\in \wI$ do not restrict to $\tUi$, and it turns out that one should use the relative braid group symmetries $\TT_i$ introduced in \cite{WZ23} on $\tUi$.
Given a reduced expression $\bbw_0=\bs_{i_1}\bs_{i_2}\cdots\bs_{i_\ell}$ in $W^\circ$, there exists a unique $1\leq j\leq \ell$ and a unique $\beta_0\in\cR^+(\bs_{i_j})$ such that $\beta=\bs_{i_1}\bs_{i_2}\cdots \bs_{i_{j-1}}(\beta_0)$. One can regard $\cR^+(\bs_{i_j})$ as a subset of positive roots associated to the rank 1 subdiagram. Hence, we first construct root vectors $B_{\beta_0}$ for each rank 1 Satake diagram in Section \ref{sec:root-rank1}. 
In the higher rank case, we define the root vectors $B_\beta$ for $\beta\in\cR^+(\bbw_0)$ to be
\begin{align}\label{def:Bbetadiv}
    B_\beta:=\TT_{i_1}\TT_{i_2}\cdots \TT_{i_{j-1}}(B_{\beta_0});
\end{align}
see \eqref{def:Bbeta}. 
We define for any $\ba=(a_\beta)_{\beta\in\cR^+(\bbw_0)}\in\N^{\ell(\bbw_0)}$, $\bc=(c_{\gamma})_{\gamma\in\cR^+_\bullet}\in\N^{|\cR^+_\bullet|}$,
\begin{align*}
    B^{\ba}:=\prod_{\beta\in\cR^+(\bbw_0)} B_{\beta}^{a_\beta},
    \qquad
F_\bullet^{\bc}:=\prod_{\gamma\in\cR^+_\bullet}F_\gamma^{c_\gamma},\qquad E_\bullet^{\bc}:=\prod_{\gamma\in\cR^+_\bullet}E_\gamma^{c_\gamma},
\end{align*}
where orders of the products are induced by the given reduced expressions of $\bbw_0$ and $\bw$ in $W$. 

\begin{customthm}{\bf A}[Theorem \ref{thm:iPBW}]
Let $\bbw_0=\bs_{i_1}\bs_{i_2}\cdots\bs_{i_\ell}$ be a reduced expression of the longest element $\bbw_0$ in $W^\circ$, and $\bw=s_{j_1}s_{j_2}\cdots s_{j_r}$ be a reduced expression of the longest element $w_\bullet$ in $W_\bullet$. Then the monomials 
$B^\ba F_\bullet^{\bc} E_\bullet^{\bd}$ ($\ba\in \N^{\ell(\bbw_0)},\bc,\bd\in\N^{r}$) 
form a $\tU^{\imath0}$-basis of $\tUi$.
\end{customthm}

We explain the proof of this theorem. Following \cite[\S 2.4-2.5]{KY21}, the $\imath$quantum group $\tUi$ admits a filtration such that the associated graded algebra is isomorphic to $\tU^-\otimes\tU_\bullet^+\otimes \tU^{\imath0}$, where $\tU^{\imath0}$ is the Cartan part of $\tUi$ (see Section~\ref{sec:QSP} for precise definitions). We show that the leading term of $B_\beta$ equals $F_\beta$ for $\beta\in \cR^+(\bbw_0)$. In the rank 1 case, this property is clear from the explicit formulas of $B_\beta$; in the higher rank case, this property is proved in Lemma~\ref{lem:leadingPBW}, using the quasi $K$-matrix and the defining intertwining relation \eqref{eq:compTT} for $\TT_i$ established in \cite{WZ23}. This property implies that the images of monomials $B^\ba F_\bullet^{\bc} E_\bullet^{\bd}$ form a $\tU^{\imath0}$-basis for the associated graded algebra, and hence these monomials also form a basis for $\tUi$.

\subsection{Integral PBW bases for the modified $\imath$quantum group $\dot{\U}^\imath$}
\label{sec:intro3}

By \cite{BW18b,BW21}, the modified $\imath$quantum group contains the orthogonal idempotents $\mathbf{1}_\zeta$ ($\zeta\in X_\imath$), and is of the form 
$\dot{\U}^\imath=\bigoplus_{\zeta\in X_\imath}\dot{\U}^\imath\mathbf{1}_{\zeta}$,
where the $\imath$-weight lattice $X_\imath$ is defined in \eqref{def:XiYi}.

It was shown in \cite{WZ23} that the distinguished parameters $\bvs_\diamond$ (see Table \ref{table:localSatake}) are most natural for the construction of relative braid group symmetries. Since $q^{1/2}$ appears in the distinguished parameters, we shall consider the ring $\bar{\A}=\Z[q^{1/2},q^{-1/2}]$ and an integral $\bar{\A}$-form $\dot{\U}^\imath_{\bar{\A}}$ of $\dot{\U}^\imath=\dot{\U}^\imath_\bvs$ defined in a similar way to \cite[Definition 3.19]{BW18b}; see Definition~\ref{def:integral-Uidot}.

In order to construct a PBW type basis for $\dot{\U}^\imath_{\bar\A}$, we need to establish ``$\imath$-divided powers'' $B_{\beta}^{(m)},\beta\in\cR^+(\bbw_0),m\geq0$ (which differ from usual divided powers in general) for root vectors $B_\beta$. 
For each rank 1 Satake diagram, we construct $B_{\beta}^{(m)}$ case by case in Section~\ref{subsec:divid-root-rank1}, using $\imath$-divided powers $B_{i}^{(m)}$ (=$B_{\alpha_i}^{(m)}$) given in \cite{BW18b,BW21}. We prove that $B_{\beta}^{(m)}\mathbf{1}_\zeta$ lie in the integral form $\dot{\U}^\imath_{\bar{\A}}$ if the parameters $\vs_i\in\bar{\A}$ for $i\in\I_\circ$; see Lemma \ref{lem:div-integral}. In the higher rank case, we define $B_\beta^{(m)}$ similar to \eqref{def:Bbetadiv} using the relative braid group symmetries:
\begin{align}\label{def:Bbetadiv-2}
    B_\beta^{(m)}:=\bT_{i_1}\bT_{i_2}\cdots \bT_{i_{j-1}}(B_{\beta_0}^{(m)});
\end{align}
see \eqref{def:Bbeta-div}. The divided powers $E_\gamma^{(m)}$, $F_{\gamma}^{(m)}$ for $\gamma\in\cR^+_\bullet$ are the same as Lusztig's.

It remains to show that 
$B_\beta^{(m)}\mathbf{1}_\zeta\in \dot{\U}^\imath_{\bar{\A}}$ in the higher rank case. The relative braid group symmetry $\bT_i$ on $\Ui$ induces an automorphism, also denoted by $\bT_{i}$, on $\dot{\U}^\imath$. When parameters satisfy $\vs_i =q^{a_i}\vs_{\diamond,i},a_i\in\Z,i\in\I_\circ$, we show that $\dot{\U}^\imath_{\bar{\A}}$ is invariant under the actions of $\bT_i$ in Theorem \ref{prop:integral-braid}. This result is first established for the distinguished parameters $\bvs_\diamond$ using the integrality of quasi $K$-matrices in Proposition~\ref{lem:integral-braid}, and then extended to the general case using the isomorphism $\phi_{\bvs,\bvs'}$ (see Lemma~\ref{lem:div-iso}) for modified $\imath$quantum groups associate to different parameters. Theorem \ref{prop:integral-braid} implies the integrality of $B_\beta^{(m)}\mathbf{1}_\zeta$ by \eqref{def:Bbetadiv-2}.

Now we define 
\begin{align*}
    B^{(\ba)}:=\prod_{\beta\in\cR^+(\bbw_0)} B_{\beta}^{(a_\beta)}, \qquad F_\bullet^{(\bc)}:=\prod_{\gamma\in\cR^+_\bullet}F_\gamma^{(c_\gamma)},\qquad E_\bullet^{(\bc)}:=\prod_{\gamma\in\cR^+_\bullet}E_\gamma^{(c_\gamma)},
\end{align*}
where orders of the products is induced by a given reduced expression of $w_\bullet$ and $\bbw_0$.

\begin{customthm}{\bf B}[Theorem \ref{thm:PBWUidot}]
Let $\bvs=(\vs_i)_{i\in\I_\circ}$ be parameters such that $\vs_i$ has the form $ \vs_i =q^{a_i}\vs_{\diamond,i},a_i\in\Z$ for any $i\in\I_\circ$.
Let $\bbw_0=\bs_{i_1}\bs_{i_2}\cdots\bs_{i_\ell}$ be a reduced expression of the longest element $\bbw_0$ in $W^\circ$, and $w_\bullet=s_{j_1}s_{j_2}\cdots s_{j_r}$ a reduced expression of the longest element $w_\bullet$ in $W_\bullet$. Then the monomials 
$B^{(\ba)} F_\bullet^{(\bc)} E_\bullet^{(\bd)}\mathbf{1}_\zeta$ ($\ba\in \N^{\ell(\bbw_0)},\bc,\bd\in\N^{|\cR_\bullet^+|}$, $\zeta\in X_\imath$)  
form an $\bar{\A}$-basis of $\dot{\U}^\imath_{\bar{\A}}$.    
\end{customthm}

It is worth noting that the parameters $\bvs$ given in Theorem {\bf B} are different from those considered in \cite{BW18b,BW21} to construct $\imath$-canonical bases. Thus our results imply that the integral form $\dot{\U}^\imath_{\bar{\A}}$ is a free $\bar{\A}$-module for a new class of parameters; we further show that $\dot{\U}^\imath_{\bar{\A}}$ is linearly isomorphic to the integral form of the parabolic subalgebra; see Proposition \ref{prop:UiPintegral}. 

Unlike the case of $\U^-$, there is no known PBW basis approach for the canonical basis on the modified quantum groups. Hence, one may not expect a construction of the $\imath$canonical basis on $\dot{\U}^\imath$ using the PBW basis.

\subsection{Organization}
The paper is organized as follows. We review quantum groups, $\imath$quantum groups and their braid group symmetries  in section \ref{sec:prelim}. In section \ref{sec:root-rank1}, we construct the $q$-root vectors for rank 1 Satake diagrams. Section \ref{sec:PBW-Ui} is devoted to formulating the PBW bases of $\imath$quantum groups.
In Section \ref{sec:Uidot}, we studied general properties of modified $\imath$quantum groups and define the $\imath$divided powers of root vectors for $\imath$quantum groups. In Section \ref{subsec:braid-Uidot}, we construct the PBW bases for the integral form of modified $\imath$quantum groups.

\vspace{2mm}
\noindent{\bf Acknowledgments.} 
We thank Weiqiang Wang for very helpful discussions. WZ thanks Huanchen Bao and Jinfeng Song for inspiring discussions in the early stage of this project. ML thanks the Institute of Mathematics at Academia Sinica (Chinese Taipei) for hospitality
and support which has greatly facilitated the completion of this work. ML is partially supported by the National Natural Science Foundation of China (No. 12171333, 1226113149). WZ is partially supported by  the New Cornerstone Foundation through the
New Cornerstone Investigator grant, and by Hong Kong RGC grant 14300021, both awarded to Xuhua He.
      
\vspace{2mm}

\noindent{\bf Statements and Declarations} 
The authors have no competing interests to declare that are relevant to the content of this article.
The authors declare that the data supporting the findings of this study are available within the paper.
\section{Preliminaries}
\label{sec:prelim}

\subsection{Quantum groups}


Let $(\I,\cdot)$ be a Cartan datum of finite type and $(Y,X,\langle \cdot ,\cdot\rangle,\dots)$ be a root datum of type $(\I,\cdot)$; cf. \cite[2.2.1]{Lus93}. Then we have
\begin{itemize}
\item[(a)] two finitely generated free abelian groups $Y,X$ and a perfect bilinear pairing $\langle\cdot,\cdot\rangle:Y\times X\rightarrow\Z$;
\item[(b)] an embedding $\I\subset X$ ($i\mapsto \alpha_i$) and an embedding $\I\subset Y$ ($i\mapsto h_i$) such that $\langle h_i,\alpha_j\rangle =2\frac{i\cdot j}{i\cdot i}$ for all $i,j\in \I$.
\end{itemize}

We always assume that the root datum $(Y,X,\langle \cdot ,\cdot\rangle,\dots)$ is {\em simply connected}. i.e., $Y=\bigoplus_{i\in\I} \Z h_i$, and $X=\Hom(Y,\Z)$.  
We call $X$ the \emph{weight lattice} and $Y$ the \emph{coroot lattice}.
  
Let $C:= (c_{ij}):=(\langle h_i, \alpha_j\rangle)$ be the Cartan matrix and $D$ be the diagonal matrix
\[
D=\diag(\epsilon_i\mid i\in \I),  \text{ where } \epsilon_i=\frac{i\cdot i}{2}  \;\; (\forall i\in \I).
\]
 Then $DC$ is symmetric.

Let $s_i:Y\rightarrow Y$ be the homomorphism given by $s_i(\mu)=\mu-\langle \mu,\alpha_i\rangle h_i$.  
Let $W=\langle s_i\mid i\in \I \rangle$ be the Weyl group. We denote by $\ell(\cdot)$ the length function on $W$. Let $\cR=\{w\alpha_i\in X\mid w\in W, i\in\I\}$ be the set of roots, and denote by $\cR^+$ the corresponding set of positive roots. Similarly, denote by $\cR^\vee\subseteq Y$  the set of coroots. We denote by $\rho^\vee\in Y$ the half sum of all positive coroots, and by $\rho\in X$ the half sum of all positive roots.

Let $q$ be an indeterminate and $\Q(q)$ be the field of rational functions in $q$ with coefficients in $\Q$, the field of rational numbers. 
We denote
\[
q_i:=q^{\epsilon_i}, \qquad \forall i\in \I.
\]
Denote, for $r,m \in \N,i\in \I$,
\[
 [r]_i =\frac{q_i^r-q_i^{-r}}{q_i-q_i^{-1}},
 \quad
 [r]_i!=\prod_{s=1}^r [s]_i, \quad \qbinom{m}{r}_i =\frac{[m]_i [m-1]_i \ldots [m-r+1]_i}{[r]_i!}.
\]

For $A, B$ in a $\Q(q)$-algebra, we shall denote $[A,B]_{q^a} =AB -q^aBA$, and $[A,B] =AB - BA$. The quantum group $\U$ is defined to be the $\Q(q)$-algebra generated by $E_i,F_i$ for $i\in\I$, and $\tK_\mu,$, $\mu\in Y$, subject to the following relations:
\begin{align}
&K_0=1,\; K_{\mu}K_{\nu}=K_{\mu+\nu}, \quad \quad \forall \mu,\nu\in Y;\label{eq:KK}
\\
&K_{\mu}E_i=q^{\langle \mu,\alpha_i\rangle} E_iK_\mu, \quad K_\mu F_i=q^{-\langle \mu,\alpha_i\rangle}F_iK_\mu, \quad \forall i\in \I, \mu\in Y.\\
&[E_i,F_j]=\delta_{ij}\frac{\tK_i-\tK_i^{-1}}{q_i-q_i^{-1}}, \text{ where }\tK_i=K_{h_i}^{\epsilon_i}, \; \;\forall i\in \I.\label{Q4}
\end{align}
and the quantum Serre relations, for $i\neq j \in \I$,
\begin{align}
& \sum_{r=0}^{1-c_{ij}} (-1)^r  E_i^{(r)} E_j  E_i^{(1-c_{ij}-r)}=0,
  \label{eq:serre1} \\
& \sum_{r=0}^{1-c_{ij}} (-1)^r  F_i^{(r)} F_j  F_i^{(1-c_{ij}-r)}=0,
  \label{eq:serre2}
\end{align}
where $E_i^{(r)}=\frac{E_i^r}{[r]_i^!}$, $F_i^{(r)}=\frac{F_i^r}{[r]_i^!}$ are the divided powers.




Let $\U^+, \U^0, \U^-$ be the subalgebras of $\U$ generated by $\{E_i\mid i\in \I\}, \{K_\mu\mid \mu\in Y\}, \{F_i\mid i\in \I\}$, respectively. There is a natural triangular decomposition $\U\cong \U^+\otimes \U^0\otimes \U^-$. Then $\U$ is a $\Z\I$-graded algebra $\U=\oplus_{\mu\in\Z\I}\U_\mu$ by setting $\deg E_i=-\alpha_i$, $\deg F_i=\alpha_i$, and $\deg K_\mu=0$. Note that $\U_0=\U^0$. 
 Moreover, $\U^+$ is a $\Z\I$-graded algebra by $\deg E_i=-\alpha_i$, we denote the grading algebra by $\U^+=\oplus_{\mu} \U^+_\mu$.
The comultiplication $\Delta: \U \rightarrow \U \otimes \U$ is given by
\begin{align}  \label{eq:Delta}
	\begin{split}
		\Delta(E_i)  = E_i \otimes 1 + \tK_i \otimes E_i, & \quad \Delta(F_i) = 1 \otimes F_i + F_i \otimes \tK_{i}^{-1}, \\
		\Delta(\tK_\mu) = \tK_\mu\otimes \tK_\mu, &\quad \forall i\in\I, \mu\in Y.
	\end{split}
\end{align}

We also have a Drinfeld double quantum group $\tU$ (see e.g. \cite{LW19a}), which is  is defined to be the $\Q(q)$-algebra generated by $E_i,F_i, \tK_i,\tK_i'$, $i\in \I$, where $\tK_i, \tK_i'$ are invertible, subject to the following relations:
\begin{align}
[E_i,F_j]= \delta_{ij} \frac{\tK_i-\tK_i'}{q_i-q_i^{-1}},  &\qquad [\tK_i,\tK_j]=[\tK_i,\tK_j']  =[\tK_i',\tK_j']=0,
\label{eq:KK}
\\
\tK_i E_j=q_i^{c_{ij}} E_j \tK_i, & \qquad \tK_i F_j=q_i^{-c_{ij}} F_j \tK_i,
\label{eq:EK}
\\
\tK_i' E_j=q_i^{-c_{ij}} E_j \tK_i', & \qquad \tK_i' F_j=q_i^{c_{ij}} F_j \tK_i',
 \label{eq:K2}
\end{align}
and \eqref{eq:serre1}--\eqref{eq:serre2}.
Similar to $\U$, we can define subalgebras $\tU^+\cong \U^+,\tU^-\cong\U^-,\tU^0$. 
The comultiplication $\Delta:\tU\rightarrow \tU\otimes\tU$ is given by
\begin{align}  
	\begin{split}
		\Delta(E_i)  = E_i \otimes 1 + \tK_i \otimes E_i, & \quad \Delta(F_i) = 1 \otimes F_i + F_i \otimes \tK_{i}', \\
		\Delta(\tK_i) = \tK_i\otimes \tK_i, &\quad\Delta(\tK'_i) = \tK'_i\otimes \tK'_i,\quad \forall i\in\I.
	\end{split}
\end{align}

There exists an anti-involution $\sigma$ on $\tU$ that fixes $E_i,F_i$ and swap $K_i\leftrightarrow K_i'$, for $i\in\I$. 

Note that $\tK_i \tK_i'$ are central in $\tU$ for all $i\in \I$. We have a canonical homomorphism $$\pi: \tU\rightarrow \U$$ 
given by $E_i\mapsto E_i$, $F_i\mapsto F_i$, $K_i\mapsto K_i$, $K_i'\mapsto K_i^{-1}$, for $i\in\I$. Obviously, $\ker\pi=(K_iK_i'-1\mid i\in\I)$.

For $i\in\I$, denote by $r_{i}: \U^+ \rightarrow \U^+$ the unique $\Q(q)$-linear maps \cite{Lus93}  such that
\begin{align}  \label{eq:rr}
r_{i}(1) = 0, \quad r_{i}(E_{j}) = \delta_{ij},
\quad r_{i}(xx') = xr_{i}(x') + q_i^{-\langle h_i,\mu'\rangle} r_{i}(x)x',
\end{align}
for $x \in \U^+_{\mu}$ and $x' \in \U^+_{\mu'}$; cf. $\deg E_i=-\alpha_i$, $i\in\I$. Since $\tU^+$ is naturally isomorphic to $ \U^+$, we also have a linear map $r_i$ on $\tU^+$.


\subsection{Braid group symmetries and PBW bases of quantum groups}

Let $\Br(W)$ be the braid group associated to the Weyl group $W$, generated by $t_i$ ($i\in\I$).
Lusztig introduced braid group symmetries $T_{i,e}',T_{i,e}''$ for $i\in\I$ and $e=\pm1$, on the Drinfeld-Jimbo quantum group $\U$ \cite[\S37.1.3]{Lus93}. Denote $T_i:=T_{i,1}''$. These braid group symmetries can be lifted to  the Drinfeld double $\tU$; see \cite[Propositions 6.20–6.21]{LW21}, which are  denoted by $\widetilde{T}_{i,e}',\widetilde{T}_{i,e}''$. We recall $\widetilde{T}_i:=\widetilde{T}_{i,1}''$ in the following.  In fact, by using the anti-involution $\sigma$ of $\tU$, we have 
\begin{align}
\widetilde{T}_i^{-1}=\sigma\circ\widetilde{T}_i\circ\sigma,\qquad \forall i\in\I.
\end{align}


\begin{proposition}
[\cite{Lus90a}]
   \label{prop:braid1}
Set $r=-c_{ij}$. There exists an automorphism $\widetilde{T}_{i}$, for $i\in \I$, on $\tU$ such that
\begin{align*}
&\widetilde{T}_{i}(K_\mu)= K_{s_i(\mu)},
\qquad \widetilde{T}_{i}(K'_\mu)= K'_{s_i(\mu)},\;\;\forall \mu\in \Z\I\\
&\widetilde{T}_{i}(E_i)=- F_i  (K_i') ^{-1},\qquad \widetilde{T}_{i}(F_i)=- K_i^{-1}E_i,\\
&\widetilde{T}_{i}(E_j)= \sum_{s=0}^r (-1)^s q_i^{-s} E_i^{(r-s)} E_j E_i^{(s)},\qquad  j\neq i,\\
&\widetilde{T}_{i}(F_j)= \sum_{s=0}^r (-1)^s q_i^{s} F_i^{(s)} F_j F_i^{(r-s)}, \qquad  j\neq i.
\end{align*}
Moreover, there exists a group homomorphism $\Br(W)\rightarrow \Aut(\tU)$, $t_i\mapsto \widetilde{T}_i$ for $i\in\I$.    
\end{proposition}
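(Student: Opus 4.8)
The plan is to deduce the proposition from Lusztig's corresponding result for the Drinfeld--Jimbo quantum group $\U$ \cite{Lus90a}, exploiting that $\tU$ differs from $\U$ only in that the Cartan part $\tU^0$ is doubled and the relation $[E_i,F_i]=(\tK_i-\tK_i')/(q_i-q_i^{-1})$ carries an independent ``primed'' Cartan element in place of $\tK_i^{-1}$. First I would define $\widetilde T_i\colon\tU\to\tU$ on the generators by the displayed formulas and check that it respects each defining relation of $\tU$. The relations among the Cartan generators hold at once because $s_i$ is a group automorphism of $Y$. The commutation relations \eqref{eq:EK}--\eqref{eq:K2} hold because $\widetilde T_i(E_j)$ and $\widetilde T_i(F_j)$ are $\Z\I$-homogeneous of degrees $-s_i(\alpha_j)$ and $s_i(\alpha_j)$ and $s_i$ preserves $\langle\cdot,\cdot\rangle$, so the powers of $q$ match. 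The Serre relations \eqref{eq:serre1}--\eqref{eq:serre2} reduce, exactly as in Lusztig's treatment of $\U$, to computations in the rank $\le 2$ sub-root-datum attached to the two indices occurring; the verifications that additionally involve $\widetilde T_i(E_i)$ or $\widetilde T_i(F_i)$ run through the same manipulations as over $\U$, simply keeping $\tK_i$ and $\tK_i'$ separate wherever Lusztig's computation produces $\tK_i^{\pm 1}$.

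The one relation I would verify in detail is the mixed relation, $\widetilde T_i([E_i,F_j])=\delta_{ij}\widetilde T_i\!\big((\tK_i-\tK_i')/(q_i-q_i^{-1})\big)$. For $i=j$ it is a short calculation: substituting $\widetilde T_i(E_i)=-F_i(\tK_i')^{-1}$ and $\widetilde T_i(F_i)=-\tK_i^{-1}E_i$ and commuting the Cartan factors through via \eqref{eq:EK}--\eqref{eq:K2}, one finds $[\widetilde T_i(E_i),\widetilde T_i(F_i)]=-[E_i,F_i]\,\tK_i^{-1}(\tK_i')^{-1}=(\tK_i^{-1}-(\tK_i')^{-1})/(q_i-q_i^{-1})$, and this equals $\widetilde T_i$ of the right-hand side since $s_i(h_i)=-h_i$. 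For $i\ne j$ one checks $[\widetilde T_i(E_i),\widetilde T_i(F_j)]=[\widetilde T_i(E_j),\widetilde T_i(F_i)]=0$ and $[\widetilde T_i(E_j),\widetilde T_i(F_k)]=\delta_{jk}\widetilde T_i\!\big((\tK_j-\tK_j')/(q_j-q_j^{-1})\big)$; since $\widetilde T_i(E_j)$ is a polynomial in $E_i,E_j$ and $\widetilde T_i(F_k)$ a polynomial in $F_i,F_k$, these follow by running the very manipulations Lusztig uses over $\U$, with each application of $[E_i,F_i]$ now producing $\tK_i-\tK_i'$. All of this lives in the rank $\le 2$ sub-root-datum of $\{i,j\}$. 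Once $\widetilde T_i$ is a well-defined algebra endomorphism, it is invertible: since $\sigma$ is an anti-involution of $\tU$, the composite $\sigma\circ\widetilde T_i\circ\sigma$ is again an algebra endomorphism, and a check on generators shows it is a two-sided inverse, so $\widetilde T_i\in\Aut(\tU)$.

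It remains to produce the homomorphism $\Br(W)\to\Aut(\tU)$, i.e.\ to verify the braid relations $\underbrace{\widetilde T_i\widetilde T_j\widetilde T_i\cdots}_{m_{ij}}=\underbrace{\widetilde T_j\widetilde T_i\widetilde T_j\cdots}_{m_{ij}}$ for $i\ne j$, with $m_{ij}$ the order of $s_is_j$. Both sides are algebra automorphisms, so it suffices to compare them on the generators, and only the rank $\le 2$ sub-root-datum attached to $\{i,j\}$ intervenes, leaving the finite list of types $A_1\times A_1$, $A_2$, $B_2$, $G_2$. On the Cartan generators the identity is the braid relation for $s_i,s_j$ acting on $Y$; on the $E$'s and $F$'s it is Lusztig's rank-$2$ computation carried out inside $\tU$, with the $\tK'$-contributions produced and handled as in the previous paragraph. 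I expect this final rank-$2$ check of the braid relations — together with the underlying rank-$2$ identities of \cite{Lus90a} for $\U$ — to be the main obstacle, all the rest being bookkeeping with $\Z\I$-weights and the doubled Cartan. As a shortcut one may instead invoke \cite{Lus90a} for $\U$ together with \cite[Propositions 6.20--6.21]{LW21} for the lift to $\tU$.
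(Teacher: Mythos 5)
The paper offers no proof of Proposition~\ref{prop:braid1}: it is stated as a citation to \cite{Lus90a}, with the preceding paragraph pointing to \cite[Propositions~6.20--6.21]{LW21} for the lift from $\U$ to $\tU$. Your closing sentence invokes precisely those two references and so matches the paper's route, while your self-contained sketch — defining $\widetilde T_i$ on generators, checking relations by tracking $\tK_i'$ separately from $\tK_i^{-1}$, obtaining the inverse as $\sigma\circ\widetilde T_i\circ\sigma$, and checking braid relations — is a correct supplement, with the displayed computation of $[\widetilde T_i(E_i),\widetilde T_i(F_i)]$ carried out correctly. One small imprecision worth flagging: the Serre-relation and braid-relation verifications are not confined to rank~$\le 2$. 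Applying $\widetilde T_i$ to the Serre relation between $E_j,E_k$ with $i\notin\{j,k\}$, or checking the braid relation of $\widetilde T_i,\widetilde T_j$ on a generator $E_k$ with $k\notin\{i,j\}$, takes place in the rank-3 subalgebra attached to $\{i,j,k\}$. What is true — and is probably what you intend — is that on $\tU^{+}$ (and $\tU^{-}$) the operators $\widetilde T_i$ agree with Lusztig's $T_i$, so those rank-3 checks are inherited verbatim from the $\U$ result, and the genuinely new bookkeeping with the doubled Cartan ($\tK_i$ versus $\tK_i'$) occurs only in the rank-2 identities you isolate.
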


Hence, we can define
\begin{align}\widetilde{T}_w 
 := \widetilde{T}_{i_1}\cdots
\widetilde{T}_{i_r} \in \Aut(\tU),
\end{align}
where $w = s_{i_1}
\cdots s_{i_r}$
is any reduced expression of $w \in W$. 

Set $\A=\Z[q,q^{-1}]$. 
Let $\tU_{\A}^-$ be the $\A$-subalgebra of $\tU^-$ generated by $ F_i^{(a)}$ for  $i\in \I,a\in \N$. Similarly, one can define $\tU_{\A}^+$.

Let $w\in W$ with a reduced expression ${w}=s_{i_1} \cdots s_{i_l}$. We set
\begin{align}
\cR^+(w)=\{\alpha_{i_1}, s_{i_1}(\alpha_{i_2}), s_{i_1}s_{i_2}(\alpha_{i_3}), \ldots, s_{i_1}s_{i_2}\cdots s_{i_{l-1}}(\alpha_{i_l})\}=\cR^+\cap w(\cR^-).
\end{align}
Let $w_0$ be the longest element of $W$. Then $\cR^+(w_0)=\cR^+$.

\begin{proposition}[\text{\cite[Proposition 1.10]{Lus90a},\cite[Proposition 5.7]{Lus90c}}]
\label{prop:PBW-QG}
Fix a reduced expression ${w_0}=s_{i_1} \cdots s_{i_l}$ for the longest element $w_0\in W$. Set
\begin{align}
\label{eq:F-root}
F_{\beta_k}=\widetilde{T}_{i_1}\widetilde{T}_{i_2}\cdots \widetilde{T}_{i_{k-1}}(F_{i_k}),\qquad 
F_{\beta_k}^{(a)}=\widetilde{T}_{i_1}\widetilde{T}_{i_2}\cdots \widetilde{T}_{i_{k-1}}(F_{i_k}^{(a)}),
\end{align}
for $1\leq k\leq l,a\in \N$. Then the monomials 
$$F^{\ba}=F_{\beta_1}^{a_1} F_{\beta_2}^{a_2}\cdots F_{\beta_l}^{a_l}, \qquad \ba=(a_1,\ldots, a_l)\in \N^l$$ 
form a $\Q(q)$-basis for $\tU^-$, and  the monomials $$F^{(\ba)}:=F_{\beta_1}^{(a_1)} F_{\beta_2}^{(a_2)}\cdots F_{\beta_l}^{(a_l)}, \qquad \ba=(a_1,\ldots, a_l)\in \N^l$$ form an $\A$-basis for $\tU_\A^-$.
\end{proposition}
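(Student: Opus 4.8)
\emph{Outline of the proof.} The corresponding statement with $\U^-$ in place of $\tU^-$ and Lusztig's $T_i=T_{i,1}''$ in place of $\widetilde T_i$ is \cite[Proposition~1.10]{Lus90a} (for the $\Q(q)$-basis) and \cite[Proposition~5.7]{Lus90c}, \cite[\S40--41]{Lus93} (for the $\A$-form); the plan is to transport this along the canonical projection $\pi\colon\tU\to\U$. We use two facts. First, since $\ker\pi=(\tK_i\tK_i'-1\mid i\in\I)$ lies in $\tU^0$ under the triangular decomposition, $\pi$ restricts to an algebra isomorphism $\tU^-\xrightarrow{\ \sim\ }\U^-$ sending $F_i\mapsto F_i$, and likewise to an isomorphism $\tU_\A^-\xrightarrow{\ \sim\ }\U_\A^-$ sending $F_i^{(a)}\mapsto F_i^{(a)}$. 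Second, $\widetilde T_i$ is a lift of $T_i$, i.e. $\pi\circ\widetilde T_i=T_i\circ\pi$ (\cite[Propositions~6.20--6.21]{LW21}; one may verify this directly on the generators $E_j,F_j,\tK_\mu,\tK'_\mu$ from the formulas of Proposition~\ref{prop:braid1}, keeping in mind $\pi(\tK'_\mu)=\tK_\mu^{-1}$). Iterating along a reduced word gives $\pi\circ\widetilde T_w=T_w\circ\pi$ for every $w\in W$.

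\emph{The root vectors lie in $\tU^-$ (resp. $\tU_\A^-$).} Put $w=s_{i_1}\cdots s_{i_{k-1}}$, so that $ws_{i_k}$ is reduced. The membership $\widetilde T_w(F_{i_k})\in\tU^-$ is the $\tU$-analogue of the classical fact that $T_w(F_{i_k})\in\U^-$ when $ws_{i_k}>w$; it is proved by the same induction on $\ell(w)$ as in \cite[\S38, \S40]{Lus93}, carried out inside $\tU$, and is recorded in \cite{LW21}. Since $\widetilde T_w$ is an algebra homomorphism, $F_{\beta_k}^{(a)}=\widetilde T_w(F_{i_k}^{(a)})=F_{\beta_k}^{\,a}/[a]_{i_k}^{!}$ also lies in $\tU^-$. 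Applying $\pi$ and using $\pi\circ\widetilde T_w=T_w\circ\pi$, we get that $\pi$ sends $F_{\beta_k}$ and $F_{\beta_k}^{(a)}$ to Lusztig's root vectors $T_w(F_{i_k})$ and $T_w(F_{i_k}^{(a)})$ in $\U^-$, the latter lying in $\U_\A^-$; since $\pi|_{\tU^-}$ is injective and $\pi(\tU_\A^-)=\U_\A^-$, this forces $F_{\beta_k}^{(a)}\in\tU_\A^-$.

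\emph{Conclusion.} Under the isomorphism $\pi|_{\tU^-}$ (resp.\ $\pi|_{\tU_\A^-}$), the monomial $F^{\ba}$ (resp.\ $F^{(\ba)}$) of $\tU^-$ is carried exactly to Lusztig's PBW monomial for the same reduced expression in $\U^-$ (resp.\ $\U_\A^-$). By the cited classical PBW theorem these form a $\Q(q)$-basis of $\U^-$ (resp.\ an $\A$-basis of $\U_\A^-$); hence, being the image of such a basis under a $\Q(q)$- (resp.\ $\A$-) algebra isomorphism, the $F^{\ba}$ form a $\Q(q)$-basis of $\tU^-$ and the $F^{(\ba)}$ form an $\A$-basis of $\tU_\A^-$.

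\emph{Main point of difficulty.} There is no real obstacle beyond importing the classical statement for $\U^-$; its own proof (spanning via Levendorskii--Soibelman straightening relations with $\A$-coefficients, and linear independence via the nondegenerate bilinear form on Lusztig's algebra $\mathbf f$) I would simply cite rather than reproduce. The only things requiring genuine care are bookkeeping: reconciling the normalization conventions for $\widetilde T_i$ on $\tU$ with those for $T_i$ on $\U$ when checking $\pi\circ\widetilde T_i=T_i\circ\pi$, and making sure that the lifted braid symmetries $\widetilde T_w$ indeed send the generators $F_{i}^{(a)}$ into $\tU_\A^-$ whenever $ws_i$ is reduced --- which, as explained above, follows from the $\U$-case once $F_{\beta_k}\in\tU^-$ is known.
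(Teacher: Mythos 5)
Your proposal is correct and follows the route the paper implicitly takes: it gives no independent proof but simply cites the classical PBW theorem for $\U^-$, and the content is exactly the transport along the isomorphism $\pi\colon\tU^-\xrightarrow{\ \sim\ }\U^-$ together with the compatibility of $\widetilde{T}_i$ with a Lusztig-type braid symmetry on $\U$. One small caveat worth flagging in your bookkeeping step: the explicit formulas in Proposition~\ref{prop:braid1} project under $\pi$ to the Jantzen-normalized symmetry (i.e.\ $\widetilde{T}_i(E_i)\mapsto -F_i\tK_i$, matching $T''_{i,-1}$ rather than $T''_{i,1}$ in Lusztig's book notation), but since all of Lusztig's variants $T'_{i,e}$, $T''_{i,e}$ yield PBW bases for $\U^-$ and $\U^-_\A$, this has no effect on the conclusion.
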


In fact, for any total order on $\cR^+$, the monomials 
$$F^\ba=\prod_{\beta\in\cR^+} F_\beta^{a_\beta},\qquad \ba=(a_\beta)\in\N^l$$
form a $\Q(q)$-basis for $\tU^-$; see \cite[Proposition 1.10]{Lus90a}.

\subsection{Satake diagrams and relative root systems}


Given a subset $\bI\subset \I$, denote by $W_{\bullet}$ the parabolic subgroup of $W$ generated by $s_i,i\in \bI.$ Set $\bw$ to be the longest element of $W_\bullet$. Let $\cR_{\bullet}$ be the set of roots with the simple system $\{\alpha_i\mid i\in \bI\}$.  Similarly, $\cR_{\bullet}^\vee$ is the set of coroots which lie in the span of $h_i,i\in \bI.$
Let $\rho_\bullet$ be the half sum of positive roots in the root system $\cR_{\bullet}$, and $\rho_\bullet^\vee$ be the half sum of positive coroots in $\cR_\bullet^\vee$.

An {\em admissible pair} $(\I=\bI \cup \wI,\tau)$ (cf. \cite{Ko14}) consists of a partition $\bI\cup \wI$ of $\I$, and an involution $\tau$ of the Cartan datum $(\I,\cdot)$ (possibly $\tau=\Id$) such that
\begin{itemize}
\item[(1)] $\tau(\bI)=\bI$, 
\item[(2)]
 $\bw(\alpha_j) = - \alpha_{\tau j}$ for $j\in \bI$,
\item[(3)]
 If $j\in \wI$ and $\tau j =j$, then $\langle \rho_{\bullet}^\vee,\alpha_j\rangle\in \Z$.
\end{itemize}

The partition $\bI\cup \wI$ can be visualized as a bicoloring on the Dynkin diagram, where vertices in $\bI$ are colored black and vertices in $\wI$ are colored white; $\tau$ is identified with a Dynkin diagram involution. This bicolored Dynkin diagram decorated by the diagram involution $\tau$ is known as the Satake diagram. We shall interchangeably use the terminologies: admissible pairs and Satake diagrams.

A symmetric pair $(\g,\theta)$ (of finite type) consists of a semisimple Lie algebra $\g$ and an involution $\theta$ on $\g$. For each Satake diagram, there is an associated involution $\theta$ on $\g$, whose action on the weight lattice is given by $\theta=-\bw \circ \tau$. The irreducible symmetric pairs are classified by Satake diagrams.

Let $(Y,X,\langle \cdot ,\cdot\rangle,\dots)$ be a root datum of type $(\I,\cdot)$. Then  $\tau$ extends to an involution on $X$ and an involution on $Y$, such that the perfect bilinear pairing is invariant under the involution $\tau$. Furthermore,  $\theta=-w_\bullet\circ \tau$ is an involution of $X$ and $Y$. Following \cite{BW18b}, the $\imath$-weight lattice and $\imath$-root lattice are defined to be:
\begin{align}\label{def:XiYi}
X_\imath=X/\check{X}, \text{ where }\check{X}=\{\lambda-\theta(\lambda)\mid \lambda\in X\},
\quad
Y^\imath=\{\mu\in Y\mid \theta(\mu)=\mu\}.
\end{align}
Denote by $\ov{\lambda}$ the image of  $\lambda\in X$ in $X_\imath$. 

Let $\wItau$ be a fixed set of representatives for $\tau$-orbits on $\wI$. The {\em real rank} of a Satake diagram $(\I=\bI \cup \wI,\tau)$ is defined to be the cardinality of $\wItau$.

We define real rank 1 Satake subdiagrams following the convention in \cite{DK19,WZ23}. For $i\in \wI$, we set $\I_{\bullet,i}:=\bI\cup \{i,\tau i\}$ and there is a real rank 1 Satake subdiagram $(\I_{\bullet,i}=\{i,\tau i\}\cup\bI,\tau\big|_{\I_{\bullet,i}})$.
Let $\cR_{\bullet,i}$ be the set of roots which are linear combinations of $\alpha_j,j\in \I_{\bullet,i}$; $\cR_{\bullet,i}$ is naturally a root system with the simple system $\{\alpha_j\mid j\in \I_{\bullet,i}\}$. Denote by $\cR_{\bullet,i}^+$ the corresponding positive system of $\cR_{\bullet,i}$. Let $W_{\bullet,i}$ be the parabolic subgroup of $W$ generated by $s_i,i\in \I_{\bullet,i}$ and $w_{\bullet,i}$ the longest element of $W_{\bullet,i}$.

Define $\bs_i\in W_{\bullet,i}$ such that
\begin{align}\label{def:bsi}
\bwi= \bs_i w_\bullet \, (=w_\bullet \bs_i).
\end{align}
It follows from the definition of admissible pairs that $\bwi,\bs_i,$ and $w_\bullet$ commute with each other and
$\ell(\bwi) = \ell(\bs_i) + \ell(w_\bullet)$. Then the subgroup of $W$, known as the relative Weyl group,
\[
\reW :=\langle \bs_i\mid i\in \wItau \rangle,
\]
is a Weyl group by itself with its simple reflections identified with $\{\bs_i \mid i\in \wItau\}$.

Let $\bbw_0$ be the longest element in $W^\circ$. It is known that $w_0=\bbw_0 \bw=\bw \bbw_0 $ and $\ell(w_0)=\ell(\bbw_0)+\ell(\bw)$.

\begin{lemma}
\label{lem:positive-roots-rankone}
\begin{itemize}
\item[(1)] $\cR^+$ is the disjoint union of $\cR_{\bullet}^+$ and $\cR^+(\bbw_0)$.
\item[(2)] $\cR_{\bullet,i}^+$ is the disjoint union of $\cR_{\bullet}^+$ and $\cR^+(\bs_i)$.
\end{itemize}
\end{lemma}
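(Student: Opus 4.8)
The plan is to deduce both parts from the elementary fact that $\cR^+(uv) = \cR^+(u) \sqcup u\big(\cR^+(v)\big)$ for any $u,v \in W$ with $\ell(uv) = \ell(u) + \ell(v)$, combined with the relations $w_0 = \bw\,\bbw_0 = \bbw_0\,\bw$ and $\ell(w_0) = \ell(\bbw_0) + \ell(\bw)$ recalled above.

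For (1), I would apply this fact to $w_0 = \bw\,\bbw_0$. Since $\cR^+(w_0) = \cR^+$ and $\cR^+(\bw) = \cR_\bullet^+$ ($\bw$ being the longest element of $W_\bullet$), this gives $\cR^+ = \cR_\bullet^+ \sqcup \bw\big(\cR^+(\bbw_0)\big)$, hence $\bw\big(\cR^+(\bbw_0)\big) = \cR^+ \setminus \cR_\bullet^+$. Next I would record that $\bw$ restricts to an involution of $\cR^+ \setminus \cR_\bullet^+$: as $\bw \in W_\bullet$ it permutes $\cR_\bullet$, hence permutes $\cR \setminus \cR_\bullet$, and since $\bw(\alpha) - \alpha \in \bigoplus_{j \in \bI}\Z\alpha_j$, a positive root $\alpha \notin \cR_\bullet$ retains a positive coefficient at some simple root outside $\bI$, so $\bw(\alpha)$ is again positive. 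Applying $\bw$ to the displayed identity and using $\bw^2 = 1$ then yields $\cR^+(\bbw_0) = \cR^+ \setminus \cR_\bullet^+$, which is the asserted disjoint decomposition. (One could instead apply the factorization fact to $w_0 = \bbw_0\,\bw$ directly, but the same involution argument would still be needed to see $\bbw_0(\cR_\bullet^+) = \cR_\bullet^+$.)

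For (2), I would run exactly this argument inside the real rank 1 Satake subdiagram $(\I_{\bullet,i} = \{i,\tau i\} \cup \bI,\ \tau|_{\I_{\bullet,i}})$. There the ambient finite Weyl group is $W_{\bullet,i}$ with longest element $\bwi = \bs_i\bw$ and $\ell(\bwi) = \ell(\bs_i) + \ell(\bw)$, the ``black'' parabolic is still $W_\bullet$ with longest element $\bw$, and the associated relative Weyl group is $\langle\bs_i\rangle$ with longest element $\bs_i$; so part (1), applied to this sub-datum, gives $\cR_{\bullet,i}^+ = \cR_\bullet^+ \sqcup \cR^+(\bs_i)$. The one compatibility point to note is that $\cR^+(\bs_i)$ computed inside $\cR_{\bullet,i}$ agrees with $\cR^+ \cap \bs_i(\cR^-)$: since $\bs_i \in W_{\bullet,i}$ preserves signs on $\cR \setminus \cR_{\bullet,i}$, any root sent negative by $\bs_i^{-1}$ already lies in $\cR_{\bullet,i}$.

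Overall the argument is short and elementary, so there is no serious obstacle; the only point requiring care is the identification $\cR^+(\bbw_0) = \cR^+ \setminus \cR_\bullet^+$ itself (rather than merely $\bw(\cR^+(\bbw_0)) = \cR^+ \setminus \cR_\bullet^+$), which is exactly where the involution property of $\bw$ on $\cR^+ \setminus \cR_\bullet^+$ enters, and likewise for the corresponding step inside the rank 1 subdiagram.
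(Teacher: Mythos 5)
Your proof is correct and follows essentially the same route as the paper: both decompose $\cR^+=\cR^+(w_0)$ via the length-additive factorization of $w_0$ into $\bbw_0$ and $\bw$ and then upgrade the resulting decomposition by a stability argument, the only difference being that the paper uses $w_0=\bbw_0\bw$ and checks that $\bbw_0$ permutes $\cR^+(\bw)$ (via the definition of $\bs_i$), whereas you use $w_0=\bw\,\bbw_0$ and check that $\bw$ permutes $\cR^+\setminus\cR_\bullet^+$ (via $\bw\in W_\bullet$). Your extra remark for (2) --- that $\cR^+(\bs_i)$ computed inside $\cR_{\bullet,i}$ agrees with $\cR^+\cap\bs_i(\cR^-)$ because $\bs_i\in W_{\bullet,i}$ preserves signs on $\cR\setminus\cR_{\bullet,i}$ --- is a compatibility point the paper leaves implicit, and it is worth keeping.
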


\begin{proof}
Statement (2) is obtain by applying (1) to the real rank 1 subdiagram $(\I_{\bullet,i}=\{i,\tau i\}\cup\bI,\tau\big|_{\I_{\bullet,i}})$. It suffices to prove (1). 

Since $w_0 =\bbw_0\bw $ and $\ell(w_0)=\ell(\bbw_0)+\ell(\bw)$, a reduced expression of $w_0$ can be obtained by composing reduced expressions of $\bw$ and $\bbw_0$ in $W$. Then we have
\begin{align}
\cR^+=\cR^+(w_0)=\cR^+(\bbw_0) \sqcup \bbw_0\big( \cR^+(\bw)\big).
\end{align}
By the definition \eqref{def:bsi}, $\bs_i$ permutes $\cR^+(\bw)$ and hence $\bbw_0$ permutes $\cR^+(\bw)$. Therefore, we have $ \bbw_0\big( \cR^+(\bw)\big)= \cR^+(\bw)$ and then (1) follows.
\end{proof}

We introduce a subgroup of $W$:
\begin{align}
W^\theta = \{w \in W \mid w\theta = \theta w\}.
\end{align}
By definition, $\bs_i,s_j\in W^\theta $ for $i\in \wItau, j\in \bI$. Then $W_\bullet $ and $ W^\circ$ can be naturally identified as subgroups of $ W^\theta$. Hence, $W_\bullet $ and $ W^\circ$ acts naturally on $X_\imath$ and $Y^\imath$.

It is well known that (see, e.g., \cite[\S2.2]{DK19})
\begin{align}\label{eq:Wproduct}
W^\theta= W_\bullet \rtimes W^\circ .
\end{align}
We shall refer to the braid group associated to the relative Weyl group $W^\circ$ as the
relative braid group and denote it by $\Br(W^\circ
)$. Accordingly, we denote the braid group
associated to $W_\bullet$ by $\Br(W_\bullet)$.

\subsection{Quantum symmetric pairs and $\imath$quantum groups}
\label{sec:QSP}

We recall the definition of quantum symmetric pairs from \cite{Let02, Ko14}; see also \cite{LW19a,WZ23}.  Given a Satake diagram $(\I=\wI\cup\bI,\tau )$, the {\em universal  $\imath$quantum group} $\tUi$ is the subalgebra of $\tU$ generated by
\begin{align}
\label{def:iQG}
\begin{split}
&B_i=F_i+ \tTD_{\bw}( E_{\tau i}) K_i', \qquad k_i^{\pm1}=(K_i K_{\tau i}')^{\pm1} \qquad (i\in \wI),
\\
&E_j,\quad B_j=F_j,\quad K_j^{\pm 1},\quad (K_j')^{\pm1}\qquad (j\in \bI).
\end{split}
\end{align}
By definition, $\tUi$ contains the Drinfeld double $$\tU_\bullet=\langle E_j,F_j,K_j^{\pm 1},(K_j')^{\pm1}\mid j\in \bI\rangle$$ 
associated to
$\I_\bullet$ as a subalgebra.
We define $\tbU^\pm,\tbU^0$ in a similar way as $\tU^\pm, \tU^0$ and then there is a natural triangular decomposition $\tbU\cong \tbU^+ \otimes \tbU^0 \otimes \tbU^-$.
Denote $\tU^{\imath 0}$  the subalgebra of $\tUi$ generated by $k_i^{\pm1},K_j^{\pm 1},(K_j')^{\pm 1},\;i\in \wI,j\in \bI$.

The pair $(\tU,\tUi)$ is known as the (universal) quantum symmetric pair associated to $(\I=\wI \cup \bI,\tau )$. 

By \cite[Proposition 3.12]{WZ23}, there exists a unique anti-involution $\sigma^\imath$ of $\tUi$ such that 
\begin{align}
    \sigma^\imath(B_i)=B_i,\qquad \sigma^\imath(x)=\sigma(x),\text{ for }i\in\I_\circ,x\in\tU^{\imath0}\tU_\bullet.
\end{align}


 Set $\deg B_i=1$ ($i\in\I_\circ$), $\deg F_i= 0=\deg E_i$ ($i\in\bI$) and $\deg \tk_i=0$ ( $i\in\I_\circ$), $\deg K_i=0=\deg K_i'$ ($i\in\I_\bullet$). 
 By \cite[(2.21)]{KY21}, we have the following algebra isomorphism 
 \begin{align}
 \label{eq:grade}
 \phi: \tU^- \otimes \tU_{\bullet}^+ \otimes \tU^{\imath0}\longrightarrow \tU^{\imath,\gr},\qquad
 F_i\mapsto \begin{cases} B_i& \text{ for } i\in\I_\circ, \\ F_i&\text{ for }i\in\I_\bullet.\end{cases}
 \end{align}

Let $\bF$ be the algebraic closure of $\Q(q)$. 
Consider the quantum group $\U$ over the field $\bF$, which is denoted by $\U_\bF$. 
The Letzter's $\imath$quantum group $\Ui=\Ui_{\bvs}$, with parameters 
$\bvs=(\vs_i)_{i\in\I_\circ}\in (\bF^\times)^{\I_\circ}$, 
is the $\bF$-subalgebra of the quantum group $\U_\bF$  generated by
\begin{align*}
B_i=F_i+\vs_i T_{w_\bullet}(E_{\tau i})\tK_i^{-1}\; (i\in \I_\circ), \quad K_\mu \;(\mu\in Y^\imath),
\quad
F_i \; (i\in \I_\bullet), \quad E_i\;(i\in\I_\bullet).
\end{align*}
By definition, the algebra $\Ui$ contains $\U_\bullet$ (over $\bF$) as a subalgebra.  When the parameters $\bvs\in(\Q(q)^\times)^{\I_\circ}$, we can also consider $\Ui$ to be over the field $\Q(q)$. So we sometimes denote the $\bF$-algebra $\Ui$ by $\Ui_\bF$ to emphasis the ground field $\bF$.

Consider $\tUi$ over the field $\bF$, which is denoted by $\tUi_\bF$. 
There is a canonical homomorphism 
\begin{align}\label{def:pibvs}
\pi^\imath_\bvs: \tUi_\bF\rightarrow \Ui_\bvs
\end{align}
given by
$B_i\mapsto B_i$, 
$k_i\mapsto \vs_{\tau i} K_iK_{\tau i}^{-1}$ ($i\in\I_\circ$), $K_i\mapsto K_i$ ($i\in\I_\bullet$), $K_i'\mapsto K_i^{-1}$ ($i\in\I_\bullet$); see \cite[Proposition 2.8]{WZ23},  \cite[Proposition 6.2]{LW19a}.





\subsection{Relative braid group symmetries}
In order to give the braid group symmetries on $\imath$quantum groups, we shall consider quantum groups and $\imath$quantum groups over the algebraic closure $\bF$ of $\Q(q)$, which are denoted by $\tU_\bF,\tUi_\bF$ respectively.

\begin{proposition}[\text{\cite[Theorem~3.6]{WZ23}}]
\label{prop:Kmatrix}
There exists a unique element $\widetilde{\Upsilon}=\sum_{\mu\in\N\I}\widetilde{\Upsilon}^\mu$ such that $\widetilde{\Upsilon}^0=1$, $\widetilde{\Upsilon}^\mu\in\tU^+_\mu$ and the following identities hold:
\begin{align}
B_i\widetilde{\Upsilon}=&\widetilde{\Upsilon}B_i^\sigma,\qquad (i\in\I_\circ), 
\\
x\widetilde{\Upsilon}=&\widetilde{\Upsilon}x,\qquad (x\in\tU^{\imath0}\tU_\bullet),
\end{align}
where $B_i^\sigma:=\sigma(B_i)=F_i+K_i\widetilde{T}_{w_\bullet}^{-1}(E_{\tau i})$.  Moreover, $\widetilde{\Upsilon}^\mu=0$ unless $\theta(\mu)=-\mu$.
\end{proposition}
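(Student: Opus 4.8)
The statement to prove is Proposition~\ref{prop:Kmatrix}, the existence and uniqueness of the quasi $K$-matrix $\widetilde{\Upsilon}$ for the universal $\imath$quantum group $\tUi$, together with the parity constraint $\widetilde{\Upsilon}^\mu = 0$ unless $\theta(\mu) = -\mu$.

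\textbf{Overall strategy.} The plan is to prove existence and uniqueness by an inductive construction on the grading, turning the intertwining relations $B_i\widetilde{\Upsilon} = \widetilde{\Upsilon}B_i^\sigma$ ($i\in\I_\circ$) and $x\widetilde{\Upsilon} = \widetilde{\Upsilon}x$ ($x\in\tU^{\imath0}\tU_\bullet$) into a recursive system of equations for the homogeneous components $\widetilde{\Upsilon}^\mu \in \tU^+_\mu$. The key technical device will be Lusztig's skew-derivations $r_i$ (and their left-handed analogues ${}_ir$) on $\tU^+$ introduced in \eqref{eq:rr}: the point is that for $y \in \tU^+_\mu$, the commutator $[B_i, y]$, after projecting modulo lower-degree terms and using the triangular decomposition of $\tU$, is controlled by $r_i(y)$ and ${}_ir(y)$, which have degree $\mu - \alpha_i$. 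This converts the equation in degree $\mu$ into an equation expressing $r_i(\widetilde{\Upsilon}^\mu)$ and ${}_ir(\widetilde{\Upsilon}^\mu)$ in terms of the already-constructed components $\widetilde{\Upsilon}^{\mu'}$ with $\mu' < \mu$, the elements $\widetilde{T}_{w_\bullet}(E_{\tau i})$, and the coefficients coming from how $K_i, K_i'$ interact with $\tU^+_\mu$.

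\textbf{Key steps in order.} First I would normalize $\widetilde{\Upsilon}^0 = 1$ and verify that the relations with $x \in \tU^{\imath0}\tU_\bullet$ force $\widetilde{\Upsilon}^\mu$ to be $\tU_\bullet$-invariant (under the adjoint-type action built from $\widetilde{T}_{w_\bullet}$) and to satisfy the weight constraint forcing $\theta(\mu) = -\mu$ — this last point comes from comparing the $\tU^{\imath0}$-weights on the two sides, since $k_i$ and $K_j, K_j'$ act by the character $\mu$ on the left and by $\theta$-twisted character on the right, so a nonzero $\widetilde{\Upsilon}^\mu$ requires these to agree. Second, I would show uniqueness: if $\widetilde{\Upsilon}, \widetilde{\Upsilon}'$ both satisfy the conditions, then their difference $\Xi$ satisfies $B_i\Xi = \Xi B_i^\sigma$ with $\Xi^0 = 0$; examining the lowest nonzero component $\Xi^\mu$ and applying the $r_i$ argument shows $r_i(\Xi^\mu) = 0$ for all $i$, hence $\Xi^\mu \in \tU^+_0 = \Q(q)$, forcing $\Xi^\mu = 0$, a contradiction. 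Third, for existence I would construct $\widetilde{\Upsilon}^\mu$ recursively: assuming $\widetilde{\Upsilon}^{\mu'}$ defined and satisfying all relations for $\mu' < \mu$, the recursion gives candidate values for $r_i(\widetilde{\Upsilon}^\mu)$; one must check (a) these candidates are \emph{consistent}, i.e. $r_i$-relations and ${}_ir$-relations agree and satisfy the compatibility $r_i \circ {}_jr = {}_jr \circ r_i$ type constraints coming from the braid/Serre structure, so that an element $\widetilde{\Upsilon}^\mu \in \tU^+_\mu$ with these skew-derivatives actually exists and is unique (this uses that $\tU^+_\mu \hookrightarrow \bigoplus_i \tU^+_{\mu-\alpha_i}$ via $(r_i)$ when $\mu \neq 0$), and (b) that the resulting element automatically satisfies the $\tU_\bullet$-invariance and weight conditions, which should follow because $B_i^\sigma$ and the $\tU_\bullet$-part were built $\tau$- and $W_\bullet$-equivariantly.

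\textbf{Main obstacle.} The hard part will be the consistency check in step three: verifying that the over-determined system $\{r_i(\widetilde{\Upsilon}^\mu) = (\text{known})\}_{i\in\I_\circ}$ together with the left-version $\{{}_ir(\widetilde{\Upsilon}^\mu) = (\text{known})\}$ is actually solvable in $\tU^+_\mu$. This requires a careful bookkeeping of the rank-$2$ relations (quantum Serre relations, and the interaction of $B_i, B_j$ for $i \ne j$ in $\I_\circ$) to show the necessary cocycle-type identities among the right-hand sides; in the quasi-split case this is the classical computation of \cite{BW18a,BK19}, but in the presence of $\I_\bullet \neq \emptyset$ one must additionally track the element $\widetilde{T}_{w_\bullet}(E_{\tau i})$ and use that $\widetilde{T}_{w_\bullet}$ intertwines the $\tU_\bullet$-structure correctly, as in the reformulation of \cite{AV22,Ko21} adapted to $\tUi$ in \cite{WZ23}. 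Since this is precisely \cite[Theorem~3.6]{WZ23}, I would ultimately appeal to that reference for the detailed rank-$2$ verification, while the inductive skeleton and the uniqueness and parity arguments above are self-contained.
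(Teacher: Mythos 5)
This proposition is stated in the paper as an imported result, cited directly from \cite[Theorem~3.6]{WZ23}, and the paper gives no proof of its own. Your outline correctly reproduces the standard argument from that literature (the $r_i$/${}_ir$ recursion on homogeneous components, uniqueness via $\bigcap_i\ker r_i\cap\tU^+_\mu=0$ for $\mu\neq 0$, and the weight comparison with $\tU^{\imath0}\tU_\bullet$ for the parity constraint), and like the paper you ultimately defer the hard consistency check to the same reference, so there is nothing to compare: both treatments rest on \cite{WZ23}.
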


Let $\ba = (a_i)_{i\in\I} \in (\bF^\times)^\I
$. By \cite[Proposition 2.1]{WZ23}, there exists an
automorphism $\widetilde{\Psi}_\ba$ on the $\bF$-algebra $\tU_\bF$ such that
\begin{align}
\widetilde{\Psi}_\ba:\tK_i\mapsto a_i^{1/2}\tK_i, \quad \tK_i'\mapsto a^{1/2}_i\tK_i',\quad E_i\mapsto a_i^{1/2}E_i,\quad F_i\mapsto F_i.
\end{align}
We have an automorphism $\Phi_\ba$ on the $\bF$-algebra $\U$ such that
\begin{align}
\label{def:Phi}
    \Phi_\ba:K_i\mapsto K_i,\quad E_i\mapsto a_i^{1/2}E_i,\quad F_i\mapsto a_i^{-1/2}F_i.
\end{align}

Let $\bvs_\diamond=(\vs_{\diamond,i})_{i\in\I}$ be the scalars such that (cf. \cite[(2.21)]{WZ23})
\begin{align}
\vs_{\diamond,i}=\begin{cases}
    -q_i^{-\langle h_i,\alpha_i+w_\bullet\alpha_{\tau i}\rangle/2},&\text{ if } i\in\I_\circ,
    \\
    1,&\text{ if }i\in\I_\bullet.
\end{cases}
\end{align}

Define the following rescaled version $\tT_i$ of Lusztig's braid group symmetries for $i\in\I$ by
$
\tT_i:=\widetilde{\Psi}_{\bvs_\diamond}^{-1}\circ \widetilde{T}_{i}\circ\widetilde{\Psi}_{\bvs_\diamond}.
$

For $i\in\I_{\circ}$, denote by $\widetilde{\Upsilon}_i$ the quasi K-matrix associated to the real rank 1 subdiagram 
$(\I_{\bullet,i}=\{i,\tau i\}\cup\I_\bullet,\tau|_{\I_{\bullet,\tau}})$. 

\begin{proposition}
\label{prop:braid-iQG}
(a) \cite[Theorem 4.7]{WZ23} For $i\in\I_{\circ,\tau}$, there exists an automorphism $\TT_i$ on $\tUi$ such that
\begin{align}
\label{eq:compTT}
\TT_i^{-1}(x) \fX_i =\fX_i \tT_{\bs_i}^{-1}(x),\qquad \forall x\in \Ui.
\end{align}


(b) \cite[Theorem 4.2]{BW18b} For $j\in\I_\bullet$, the automorphism $\tT_j=\widetilde{T}_j$ on $\tU_\bF$ restricts to an automorphism of $\tUi$. Moreover, the action of $\tT_j$ on $B_i$ $ (i\in\I_\circ)$ is given by
\begin{align}
\tT_j(B_i)=\sum_{s=0}^r(-1)^sq_j^s F_j^{(s)}B_iF_j^{(r-s)}, \qquad \text{ for }r=-c_{ij}.
\end{align}    

(c) \cite[Theorem 9.3]{WZ23} There exists a braid group action of $\Br(W^\theta)=\Br(W_\bullet) \rtimes \Br(W^\circ
)$ on $\tUi$ as automorphisms of algebras generated by $\tT_j$ $
(j \in \I_\bullet)$ and $\TT_i$ $(i\in\I_{\circ,\tau})$.
\end{proposition}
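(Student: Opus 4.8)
\emph{What has to be proved.} Parts (a) and (b) already furnish the automorphisms $\TT_i\in\Aut(\tUi)$ ($i\in\I_{\circ,\tau}$) and $\widetilde{T}_j=\tT_j\in\Aut(\tUi)$ ($j\in\I_\bullet$); the task is to check that these operators satisfy the defining relations of $\Br(W^\theta)$. Using the identification $\Br(W^\theta)=\Br(W_\bullet)\rtimes\Br(W^\circ)$ (reflecting $W_\bullet\trianglelefteq W^\theta$ from \eqref{eq:Wproduct}, which I take as a known group-theoretic fact), this amounts to three families of relations in $\Aut(\tUi)$: (i) the braid relations of $\Br(W_\bullet)$ among the $\widetilde{T}_j$; (ii) the braid relations of $\Br(W^\circ)$ among the $\TT_i$; and (iii) the ``mixed'' relations expressing that conjugation by $\TT_i$ carries $\widetilde{T}_j$ to the operator attached to $\bs_i s_j\bs_i^{-1}\in W_\bullet$.

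\emph{Relations (i) and (iii).} Relation (i) is immediate: by Proposition~\ref{prop:braid1} the $\widetilde{T}_j$ ($j\in\I_\bullet$) satisfy the $\Br(W_\bullet)$-braid relations already as automorphisms of $\tU_\bF$, and by Proposition~\ref{prop:braid-iQG}(b) they preserve $\tUi$, so the relations hold in $\Aut(\tUi)$. For (iii): conjugation by $\bs_i$ normalizes $W_\bullet$ and permutes $\cR_\bullet^+$, hence induces a diagram automorphism $\eta_i$ of the sub-Dynkin diagram on $\I_\bullet$, and the relation to verify reads $\TT_i\circ\widetilde{T}_j\circ\TT_i^{-1}=\widetilde{T}_{\eta_i(j)}$ (with the usual adjustment when $\bs_i s_j\bs_i^{-1}$ is not itself a simple reflection). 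Since both sides are algebra automorphisms, it suffices to compare them on the generators $B_k$ ($k\in\I_\circ$), $E_j,F_j$ ($j\in\I_\bullet$) and $\tU^{\imath0}$; via the explicit formulas of Proposition~\ref{prop:braid-iQG}(a)--(b), the intertwining relation \eqref{eq:compTT}, and the combinatorial facts $\bs_iw_\bullet=w_\bullet\bs_i$, $\ell(\bs_iw_\bullet)=\ell(\bs_i)+\ell(w_\bullet)$, this collapses to a bounded-rank calculation, since each operator changes only the generators indexed by vertices near $\I_{\bullet,i}$.

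\emph{Relation (ii): the core.} By Tits' theorem it suffices, for each pair $i\ne j\in\I_{\circ,\tau}$ with $m$ the order of $\bs_i\bs_j$ in $W^\circ$, to establish the single braid identity $\underbrace{\TT_i\TT_j\TT_i\cdots}_{m}=\underbrace{\TT_j\TT_i\TT_j\cdots}_{m}$ in $\Aut(\tUi)$. I would deduce this by transferring it to finite-dimensional modules: $\tUi$ embeds in $\tU$, and $\tU$ (hence its subalgebra $\tUi$) acts faithfully on the direct sum of its finite-dimensional modules, so it is enough to prove the corresponding identity for the compatible braid operators $\bT_i^M$ on every finite-dimensional $\Ui_\bvs$-module $M$ from \cite{WZ23}, which intertwine $\TT_i$ with the $\tUi$-action. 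On such an $M$ one has $\bT_i^M=\Xi_i^M\circ \tT_{\bs_i}^M$, where $\Xi_i^M$ is the (now finite) action of the rank $1$ quasi $K$-matrix $\fX_i$ and $\tT_{\bs_i}^M$ is the corresponding product of Lusztig's operators; unwinding $\underbrace{\bT_i^M\bT_j^M\cdots}_m$ and $\underbrace{\bT_j^M\bT_i^M\cdots}_m$ and comparing them on weight vectors confines the problem to the real rank $2$ Satake subdiagram on the vertex set $\I_\bullet\cup\{i,\tau i,j,\tau j\}$, where one finishes by the classification of real rank $2$ Satake diagrams, the braid order $m$ being read off from the corresponding rank $2$ restricted root system. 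An alternative, entirely algebra-level route is to iterate \eqref{eq:compTT}, which rewrites $\underbrace{\TT_i^{-1}\TT_j^{-1}\cdots}_m$ as $\mathrm{Ad}(\Xi_{\underline i})\circ\tT_{\bs_i}^{-1}\tT_{\bs_j}^{-1}\cdots$ for an element $\Xi_{\underline i}=\fX_i\cdot\tT_{\bs_i}^{-1}(\fX_j)\cdots$ of a completion of $\tU^+$; one then shows that, although neither $\Xi_{\underline i}$ nor the Lusztig-operator product is independent of the relative reduced word, the composite is, by identifying $\Xi_{\underline i}$ with the full rank $2$ quasi $K$-matrix $\fX_{ij}$ through its characterizing intertwining property (Proposition~\ref{prop:Kmatrix}) together with a factorization of $\fX_{ij}$ along the relative reduced word. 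This route again rests on the rank $2$ classification.

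\emph{Main obstacle.} The crux is relation (ii) in the real rank $2$ cases. A reduced expression of the rank $2$ element $\bs_i\bs_j\cdots$ in $W^\circ$ is in general \emph{not} reduced in the ambient $W$, so Lusztig's braid relations on $\tU$ do \emph{not} transfer directly to the operators $\tT_{\bs_i}$ or $\bT_i^M$: the twisting by the quasi $K$-matrix is precisely what makes the relative braid relation hold, and establishing this (whether as the factorization property of $\fX_{ij}$ or as the weight-vector computation on modules) forces one through the finite list of real rank $2$ Satake diagrams, in particular the non-simply-laced ones and those with $\tau\ne\mathrm{id}$ (types $\mathrm{AIII}$, $\mathrm{AIV}$, $\mathrm{BII}$, and so on).
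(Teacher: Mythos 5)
This proposition is not proved in the paper at all: all three parts are imported from the literature, with (a) and (c) cited as Theorems 4.7 and 9.3 of \cite{WZ23} and (b) as Theorem 4.2 of \cite{BW18b}, so there is no in-paper argument to compare yours against. Your outline does correctly reconstruct the architecture of the proof in those references: splitting the relations of $\Br(W^\theta)=\Br(W_\bullet)\rtimes\Br(W^\circ)$ into the $\Br(W_\bullet)$ relations (immediate from Lusztig's braid relations on $\tU$ plus part (b)), the mixed relations (which the paper records as $\tT_j^{-1}\TT_i^{-1}=\TT_i^{-1}\tT_{\tau_{\bullet,\tau}\tau j}^{-1}$ on $\tUi$; note no ``adjustment'' is ever needed, since $\bs_i$ conjugation always induces a genuine diagram automorphism of $\I_\bullet$), and the $\Br(W^\circ)$ relations, with the last reduced via the intertwining identity \eqref{eq:compTT} and partial quasi $K$-matrices to a real rank $2$ verification.

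As a proof, however, the proposal has a genuine gap exactly where you locate the ``main obstacle'': the rank $2$ verification, which is the entire mathematical content of the cited theorem, is only gestured at. Identifying your $\Xi_{\underline{i}}$ with the rank $2$ quasi $K$-matrix and showing its independence of the relative reduced word is the Dobson--Kolb factorization theorem, itself a long case-by-case analysis over the real rank $2$ Satake diagrams that you do not carry out. The module-theoretic alternative additionally relies on an unjustified faithfulness claim: the Drinfeld double $\tU$ does not act faithfully on the sum of its finite-dimensional modules without handling the central elements $K_iK_i'$ (one must pass to $\Ui_\bvs$ for varying parameters, as in \cite{WZ23}). So your text is an accurate roadmap of the proof in \cite{WZ23}, not a self-contained proof; for the purposes of this paper the citation itself is the intended justification.
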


It is worth noting that the automorhisms $\tT_j$ $
(j \in \I_\bullet)$ and $\TT_i$ $(i\in\I_{\circ,\tau})$ are defined for $\tUi$ over $\Q(q)$, not only for $\tUi_\bF$; see \cite{WZ23}. 
The automorphisms $\TT_i$ on $\tUi$ satisfy 
\begin{align}
\TT_i(x)=\tT_{\bs_i}(\widetilde{\Upsilon}_i)^{-1}\tT_{\bs_i}(x)\tT_{\bs_i}(\widetilde{\Upsilon}_i)
,\qquad \forall x\in\tUi.
    \end{align}
and 
\begin{align}
\label{eq:inverse}
    \TT_i=\sigma^\imath\circ \TT_i^{-1}
\circ\sigma^\imath;
\end{align}
    see \cite[Theorem 6.7]{WZ23}. 

For $j\in\I_\bullet$, we also have $\tT_j(\widetilde{\Upsilon})=\widetilde{\Upsilon}$, and $\tT_j(\widetilde{\Upsilon}_i)=\widetilde{\Upsilon}_i$, for $i\in\I_\circ$; cf. \cite[Proposition 4.13]{BW18b}. Moreover, we have
\begin{align}
    \tT_{j}^{-1}\TT_{i}^{-1}(x)=\TT_i^{-1}\tT_{\tau_{\bullet,\tau}\tau j}^{-1}(x), \qquad \forall i\in\I_{\circ,\tau},j\in\I_\bullet, x\in\tUi.
\end{align}


In this paper, the parameters are required to be {\em balanced}, i.e., if $\vs_i=\vs_{\tau i}$ for any $i\in\I_\circ$.

\begin{lemma}[\text{\cite[Lemma 2.5.1]{Wat21}}] 
\label{lem:iso-paremeter}
For any balanced parameter $\bvs$, there exists an $\bF$-algebra isomorphism $\phi_\bvs:\Ui_{\bvs_\diamond}\rightarrow \Ui_\bvs$ which sends $B_i\mapsto \sqrt{\vs_{\diamond,i}\vs_i^{-1}} B_i$, $E_j\mapsto E_j$, $K_j\mapsto K_j$, $K_\mu\mapsto K_\mu$, for $i\in \I_\circ$, $j\in I_\bullet$, $\mu\in X_\imath$.
\end{lemma}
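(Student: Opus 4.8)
The final statement to prove is Lemma~\ref{lem:iso-paremeter} (Watanabe), which asserts the existence of an $\bF$-algebra isomorphism $\phi_\bvs \colon \Ui_{\bvs_\diamond} \to \Ui_\bvs$ rescaling the $B_i$ by $\sqrt{\vs_{\diamond,i}\vs_i^{-1}}$ and fixing $E_j, K_j, K_\mu$ for $j \in \I_\bullet$, $\mu \in X_\imath$.

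\medskip

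The plan is to realize $\phi_\bvs$ as the restriction of an automorphism of the ambient quantum group $\U_\bF$, and then verify it carries the generators of $\Ui_{\bvs_\diamond}$ to those of $\Ui_\bvs$. First I would set $\ba = (a_i)_{i \in \I} \in (\bF^\times)^\I$ where $a_i := \vs_{\diamond,i}\vs_i^{-1}$ for $i \in \I_\circ$ and $a_i := 1$ for $i \in \I_\bullet$, choosing a fixed square root $a_i^{1/2} = \sqrt{\vs_{\diamond,i}\vs_i^{-1}}$; since $\bF$ is algebraically closed this is legitimate, and balancedness ($\vs_i = \vs_{\tau i}$) ensures $a_i = a_{\tau i}$, which is what makes the construction $\tau$-compatible. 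Then I invoke the automorphism $\Phi_\ba$ on $\U_\bF$ from \eqref{def:Phi}, which fixes each $K_i$, sends $E_i \mapsto a_i^{1/2} E_i$, and sends $F_i \mapsto a_i^{-1/2} F_i$. Since $\Phi_\ba$ is an algebra automorphism of $\U_\bF$ and $\Ui_\bvs \subseteq \U_\bF$ for every $\bvs$, it suffices to check that $\Phi_\ba(\Ui_{\bvs_\diamond}) = \Ui_\bvs$ on generators; the isomorphism $\phi_\bvs$ will be the corestriction.

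\medskip

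The key computation is on the generator $B_i = F_i + \vs_{\diamond,i} T_{w_\bullet}(E_{\tau i}) \tK_i^{-1}$ of $\Ui_{\bvs_\diamond}$ for $i \in \I_\circ$. I would compute
\begin{align*}
\Phi_\ba(B_i) &= \Phi_\ba(F_i) + \vs_{\diamond,i}\, \Phi_\ba\big(T_{w_\bullet}(E_{\tau i})\big)\, \Phi_\ba(\tK_i^{-1}) \\
&= a_i^{-1/2} F_i + \vs_{\diamond,i}\, T_{w_\bullet}\big(\Phi_\ba(E_{\tau i})\big)\, \tK_i^{-1},
\end{align*}
using that $\Phi_\ba$ commutes with $T_{w_\bullet}$ (this holds because $T_{w_\bullet}$ only involves the $\I_\bullet$-generators, on which $\Phi_\ba$ acts trivially since $a_j = 1$ for $j \in \I_\bullet$, so $\Phi_\ba$ and $\widetilde{T}_j$ commute for all $j \in \I_\bullet$, hence $\Phi_\ba$ and $T_{w_\bullet}$ commute). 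Since $\Phi_\ba(E_{\tau i}) = a_{\tau i}^{1/2} E_{\tau i} = a_i^{1/2} E_{\tau i}$ by balancedness, this equals $a_i^{-1/2}\big(F_i + a_i \vs_{\diamond,i} T_{w_\bullet}(E_{\tau i}) \tK_i^{-1}\big) = a_i^{-1/2}\big(F_i + \vs_i T_{w_\bullet}(E_{\tau i})\tK_i^{-1}\big) = a_i^{-1/2} B_i^{\bvs}$, where $B_i^{\bvs}$ is the corresponding generator of $\Ui_\bvs$. For $j \in \I_\bullet$ we have $\Phi_\ba(E_j) = E_j$, $\Phi_\ba(F_j) = F_j$, $\Phi_\ba(K_\mu) = K_\mu$ for $\mu \in Y^\imath \subseteq Y$, so these generators are fixed. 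Thus $\Phi_\ba$ maps the generating set of $\Ui_{\bvs_\diamond}$ onto (scalar multiples of) the generating set of $\Ui_\bvs$, hence $\Phi_\ba(\Ui_{\bvs_\diamond}) \subseteq \Ui_\bvs$; applying the same argument to $\Phi_\ba^{-1} = \Phi_{\ba^{-1}}$ gives the reverse inclusion, so $\phi_\bvs := \Phi_\ba|_{\Ui_{\bvs_\diamond}}$ is an isomorphism onto $\Ui_\bvs$ sending $B_i \mapsto a_i^{-1/2} B_i = \sqrt{\vs_{\diamond,i}\vs_i^{-1}}\,B_i$ (after renormalizing the convention for the square root, or noting $a_i^{-1/2} = \sqrt{\vs_i^{-1}\vs_{\diamond,i}}^{-1}$ — I would match signs with the statement, choosing the root so the formula reads exactly $\sqrt{\vs_{\diamond,i}\vs_i^{-1}}$).

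\medskip

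The main obstacle is the interaction of $\Phi_\ba$ with $T_{w_\bullet}$: one must be careful that $T_{w_\bullet}(E_{\tau i})$ is a fixed element of $\U^+_\bullet$ built from the $\I_\bullet$-generators, and verify that $\Phi_\ba$ restricted to $\U_\bullet$ is the identity (which is immediate from $a_j = 1$ for $j \in \I_\bullet$), so that $\Phi_\ba \circ T_{w_\bullet} = T_{w_\bullet} \circ \Phi_\ba$ as maps $\U_\bF \to \U_\bF$ — more precisely $\Phi_\ba(T_{w_\bullet}(x)) = T_{w_\bullet}(\Phi_\ba(x))$ for any $x \in \U_\bF$. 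A secondary, purely bookkeeping point is the choice of square roots and signs so that the scalar in front of $B_i$ comes out as $\sqrt{\vs_{\diamond,i}\vs_i^{-1}}$ rather than its inverse or negative; since only the squares matter for the algebra relations, any consistent choice gives an isomorphism, and one simply fixes the convention to match. Everything else is a routine check on the defining relations, which is automatic since $\Phi_\ba$ is already known to be an automorphism of $\U_\bF$.
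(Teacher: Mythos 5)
Your proposal is correct and is essentially the approach the paper itself indicates: the lemma is quoted from \cite[Lemma 2.5.1]{Wat21}, and the paper's only comment is precisely that $\phi_\bvs$ is the restriction of $\Phi_{\ov{\bvs}_\diamond\bvs}$ with $\ov{\bvs}_\diamond\bvs=(\vs_{i,\diamond}^{-1}\vs_i)_i$; your verification on generators, including the commutation of $\Phi_\ba$ with $T_{w_\bullet}$ via $a_j=1$ for $j\in\I_\bullet$, fills in the details correctly. One bookkeeping slip: you define $a_i:=\vs_{\diamond,i}\vs_i^{-1}$ at the outset, but your displayed computation uses $a_i\vs_{\diamond,i}=\vs_i$, i.e.\ $a_i=\vs_i\vs_{\diamond,i}^{-1}$ (the paper's convention); with that choice $a_i^{-1/2}=\sqrt{\vs_{\diamond,i}\vs_i^{-1}}$ and the formula matches the statement exactly, so simply invert your initial definition.
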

Since $\bvs$ is a balanced parameter, $\phi_\bvs$ is the restriction of $\Phi_{\ov{\bvs}_\diamond\bvs}$, where $\ov{\bvs}_\diamond\bvs$ is defined by componentwise multiplication with $\ov{\bvs}_\diamond=(\vs_{j,\diamond}^{-1})_{j\in\I_\circ}$.

\section{Root vectors for rank 1 Satake diagrams}
\label{sec:root-rank1}

\begin{table}[h]
\caption{Rank 1 Satake diagrams and local datum}
\label{table:localSatake}
\resizebox{5.5 in}{!}{%
\begin{tabular}{| c | c | c | c |}
\hline
Type & Satake diagram & $\vs_{i,\dm}$ & $\bs_i$
\\
\hline
\begin{tikzpicture}[baseline=0]
\node at (0, -0.15) {AI$_1$};
\end{tikzpicture}

&	
    \begin{tikzpicture}[baseline=0]
		\node  at (0,0) {$\circ$};
		\node  at (0,-.3) {\small 1};
	\end{tikzpicture}
& $\vs_{1,\dm}=-q^{-2}$
& $\bs_1=s_1$
\\
\hline
\begin{tikzpicture}[baseline=0]
\node at (0, -0.15) {AII$_3$};
\end{tikzpicture}
&
   \begin{tikzpicture}[baseline=0]
		\node at (0,0) {$\bullet$};
		\draw (0.1, 0) to (0.4,0);
		\node  at (0.5,0) {$\circ$};
		\draw (0.6, 0) to (0.9,0);
		\node at (1,0) {$\bullet$};
		\node at (0,-.3) {\small 1};
		\node  at (0.5,-.3) {\small 2};
		\node at (1,-.3) {\small 3};
	\end{tikzpicture}
& $\vs_{2,\dm}=-q^{-1}$
& $\bs_2=s_{2132}$
\\
\hline
\begin{tikzpicture}[baseline=0]
\node at (0, -0.2) {AIII$_{11}$};
\end{tikzpicture}
 &
\begin{tikzpicture}[baseline = 6] 
		\node at (-0.5,0) {$\circ$};
		\node at (0.5,0) {$\circ$};
		\draw[bend left, <->] (-0.5, 0.2) to (0.5, 0.2);
		\node at (-0.5,-0.3) {\small 1};
		\node at (0.5,-0.3) {\small 2};
	\end{tikzpicture}
& $\vs_{1,\dm}=-q^{-1}$
& $\bs_1=s_1 s_2$
\\
\hline
\begin{tikzpicture}[baseline=0]
\node at (0, -0.2) {AIV, n$\geq$2};
\end{tikzpicture}
&
\begin{tikzpicture}	[baseline=6]
		\node at (-0.5,0) {$\circ$};
		\draw[-] (-0.4,0) to (-0.1, 0);
		\node  at (0,0) {$\bullet$};
		\node at (2,0) {$\bullet$};
		\node at (2.5,0) {$\circ$};
		\draw[-] (0.1, 0) to (0.5,0);
		\draw[dashed] (0.5,0) to (1.4,0);
		\draw[-] (1.6,0)  to (1.9,0);
		\draw[-] (2.1,0) to (2.4,0);
		\draw[bend left, <->] (-0.5, 0.2) to (2.5, 0.2);
		\node at (-0.5,-.3) {\small 1};
		\node  at (0,-.3) {\small 2};
		\node at (2.5,-.3) {\small n};
	\end{tikzpicture}
& $\vs_{1,\dm}=-q^{-1/2}$
& $\bs_1=s_{1 \cdots  n \cdots 1} $
\\
\hline
\begin{tikzpicture}[baseline=0]
\node at (0, -0.2) {BII, n$\ge$ 2};
\end{tikzpicture} &
    	\begin{tikzpicture}[baseline=0, scale=1.5]
		\node at (1.05,0) {$\circ$};
		\node at (1.5,0) {$\bullet$};
		\draw[-] (1.1,0)  to (1.4,0);
		\draw[-] (1.4,0) to (1.9, 0);
		\draw[dashed] (1.9,0) to (2.7,0);
		\draw[-] (2.7,0) to (2.9, 0);
		\node at (3,0) {$\bullet$};
		\draw[-implies, double equal sign distance]  (3.05, 0) to (3.55, 0);
		\node at (3.6,0) {$\bullet$};
		\node at (1,-.2) {\small 1};
		\node at (1.5,-.2) {\small 2};
		\node at (3.6,-.2) {\small n};
	\end{tikzpicture}	
& $\vs_{1,\dm}=-q_1^{-1}$
& $ \bs_1=s_{1\cdots n \cdots 1}$
\\
\hline
\begin{tikzpicture}[baseline=0]
\node at (0, -0.15) {CII, n$\ge$3};
\end{tikzpicture}
&
		\begin{tikzpicture}[baseline=6]
		\draw (0.6, 0.15) to (0.9, 0.15);
		\node  at (0.5,0.15) {$\bullet$};
		\node at (1,0.15) {$\circ$};
		\node at (1.5,0.15) {$\bullet$};
		\draw[-] (1.1,0.15)  to (1.4,0.15);
		\draw[-] (1.4,0.15) to (1.9, 0.15);
		\draw (1.9, 0.15) to (2.1, 0.15);
		\draw[dashed] (2.1,0.15) to (2.7,0.15);
		\draw[-] (2.7,0.15) to (2.9, 0.15);
		\node at (3,0.15) {$\bullet$};
		\draw[implies-, double equal sign distance]  (3.1, 0.15) to (3.7, 0.15);
		\node at (3.8,0.15) {$\bullet$};
		\node  at (0.5,-0.15) {\small 1};
		\node at (1,-0.15) {\small 2};
		\node at (3.8,-0.15) {\small n};
	\end{tikzpicture}
& $\vs_{2,\dm}=-q_2^{-1/2}$
&	$\bs_2=s_{2\cdots n\cdots 2 1 2\cdots n\cdots 2} $	
\\
\hline
\begin{tikzpicture}[baseline=0]
\node at (0, -0.05) {DII, n$\ge$4};
\end{tikzpicture}&
	\begin{tikzpicture}[baseline=0]
		\node at (1,0) {$\circ$};
		\node at (1.5,0) {$\bullet$};
		\draw[-] (1.1,0)  to (1.4,0);
		\draw[-] (1.4,0) to (1.9, 0);
		\draw[dashed] (1.9,0) to (2.7,0);
		\draw[-] (2.7,0) to (2.9, 0);
		\node at (3,0) {$\bullet$};
		\node at (3.5, 0.4) {$\bullet$};
		\node at (3.5, -0.4) {$\bullet$};
		\draw (3.05, 0.05) to (3.4, 0.39);
		\draw (3.05, -0.05) to (3.4, -0.39);
		\node at (1,-.3) {\small 1};
		\node at (1.5,-.3) {\small 2};
		\node at (3.6, 0.25) {\small n-1};
		\node at (3.5, -0.6) {\small n};
	\end{tikzpicture}		
& $\vs_{1,\dm}=-q^{-1}$
&
\begin{tikzpicture}[baseline=0]
\node at (0, 0.15) { $\bs_1=$};
\node at (0, -0.35) { $ s_{1 \cdots n-2 \cdot n-1 \cdot n \cdot n-2  \cdots 1}$};
\end{tikzpicture}
\\
\hline
\begin{tikzpicture}[baseline=0]
\node at (0, -0.2) {FII};
\end{tikzpicture}
&
\begin{tikzpicture}[baseline=0][scale=1.5]
	\node at (0,0) {$\bullet$};
	\draw (0.1, 0) to (0.4,0);
	\node at (0.5,0) {$\bullet$};
	\draw[-implies, double equal sign distance]  (0.6, 0) to (1.2,0);
	\node at (1.3,0) {$\bullet$};
	\draw (1.4, 0) to (1.7,0);
	\node at (1.8,0) {$\circ$};
	\node at (0,-.3) {\small 1};
	\node at (0.5,-.3) {\small 2};
	\node at (1.3,-.3) {\small 3};
	\node at (1.8,-.3) {\small 4};
\end{tikzpicture}
& $\vs_{4,\dm}= -q_4^{-1/2}$
& $\bs_4=s_{432312343231234}$
\\
\hline
\end{tabular}
}
\newline
\smallskip
\end{table}

Let $(\I=\wI\cup\bI,\tau)$ be a rank $1$ Satake diagram. In the rank 1 case, $\wI=\{i,\tau i\}$ and $\bbw_0=\bs_i$. 
We recall from \cite{WZ23} the list of real rank 1 Satake diagrams and the formulas of $\vs_{i,\diamond}$ and $\bs_i$ for them in Table~\ref{table:localSatake}. Write $\cR^+(\bs_i)=\{\beta_1,\dots,\beta_r\}$.
We shall construct the root vectors $B_{\beta_1},\dots,B_{\beta_r}$ for $\tUi$ in this section, whose leading terms are $F_{\beta_1},\dots, F_{\beta_r}$ (see \eqref{eq:F-root}) respectively. 

\begin{lemma}[\text{see e.g. \cite[Proposition 8.20]{Jan96}}]
\label{lem:Tiwi}
We have $\widetilde{T}_w(F_i) = F_{wi}$, for $i \in \I$ and $w \in W$ such that $wi \in \I$.
\end{lemma}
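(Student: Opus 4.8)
The result is classical, and the plan is to deduce it from the analogous statement for the Drinfeld--Jimbo quantum group $\U$, namely $T_w(F_i)=F_{wi}$ for $wi\in\I$ (see \cite[Proposition 8.20]{Jan96}). Since the symmetries $\widetilde{T}_i$ on $\tU$ lift Lusztig's $T_i$ on $\U$, they are compatible with the canonical projection $\pi\colon\tU\to\U$, i.e.\ $\pi\circ\widetilde{T}_i=T_i\circ\pi$ (readily checked on generators). Hence $\pi\bigl(\widetilde{T}_w(F_i)\bigr)=T_w(F_i)=F_{wi}$. As $\pi$ restricts to an isomorphism $\tU^-\xrightarrow{\sim}\U^-$ sending $F_i\mapsto F_i$, it then suffices to check that $\widetilde{T}_w(F_i)$ lies in $\tU^-$, for in that case $\widetilde{T}_w(F_i)=(\pi|_{\tU^-})^{-1}(F_{wi})=F_{wi}$.

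To see $\widetilde{T}_w(F_i)\in\tU^-$ I would invoke the standard lemma (cf.\ \cite{Jan96}) that $\widetilde{T}_{w'}(F_j)\in\tU^-$ whenever $w'(\alpha_j)\in\cR^+$; this applies here since $w(\alpha_i)=\alpha_{wi}$ is a positive simple root. That lemma is proved by induction on $\ell(w')$ using the rank-one formulas of Proposition~\ref{prop:braid1} and the combinatorics of $\cR^+(w')$. If one prefers an argument internal to $\tU$, one proceeds as follows: $\widetilde{T}_w$ shifts the $\Z\I$-grading by $w$, so $\widetilde{T}_w(F_i)$ lies in the weight space of degree $w(\alpha_i)=\alpha_{wi}$ inside $\tU^-$, which is one-dimensional and spanned by $F_{wi}$ because $\alpha_{wi}$ is simple; moreover, since the structure coefficients of $\widetilde{T}_i$ and of $\widetilde{T}_i^{-1}=\sigma\widetilde{T}_i\sigma$ all lie in $\A=\Z[q,q^{-1}]$, in fact $\widetilde{T}_w(F_i)\in\tU^-_{\A}$. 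Thus $\widetilde{T}_w(F_i)=c\,F_{wi}$ with $c\in\A$; applying the same analysis to $\widetilde{T}_w^{-1}=\sigma\widetilde{T}_{w^{-1}}\sigma$ (and using $w^{-1}(\alpha_{wi})=\alpha_i\in\cR^+$) gives $\widetilde{T}_w^{-1}(F_{wi})=c'F_i$ with $c'\in\A$ and $cc'=1$, so $c$ is a unit of $\A$, i.e.\ $c=\pm q^{n}$ for some $n\in\Z$.

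The main obstacle is the final step, showing $c=1$ (that the sign and the power of $q$ are trivial): this is genuinely non-formal --- already $\widetilde{T}_{s_1}\widetilde{T}_{s_2}(F_1)=F_2$ in type $A_2$ does not follow from the defining formulas by inspection. The plan to handle it is a reduction to rank $\le 2$. Choose a right descent $k$ of $w$ (necessarily $k\neq i$, since $w(\alpha_i)>0$); if $k$ is not joined to $i$ in the Dynkin diagram then $\widetilde{T}_k(F_i)=F_i$ and one concludes by induction on $\ell(w)$ after writing $w=w's_k$ with $\ell(w')=\ell(w)-1$. Otherwise write $w=w'\!\cdot u$ with $u$ in the rank-two parabolic generated by $s_i,s_k$ and $\ell(w)=\ell(w')+\ell(u)$, where $w'$ has no right descent in $\{i,k\}$; then $\widetilde{T}_w(F_i)=\widetilde{T}_{w'}\bigl(\widetilde{T}_u(F_i)\bigr)$, and since $w'$ maps positive roots of the rank-two subsystem to positive roots of $\cR$ while $\alpha_{wi}$ is indecomposable, $u(\alpha_i)$ must be simple, so the problem splits into the rank-two base case $\widetilde{T}_u(F_i)=F_{ui}$ (types $A_1\times A_1$, $A_2$, $B_2$, $G_2$, verified directly via Proposition~\ref{prop:braid1}) and an application of the induction hypothesis to $w'$. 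This low-rank computation is exactly the content carried out in \cite[Proposition 8.20]{Jan96}, which one may also simply cite.
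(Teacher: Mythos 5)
Your argument is correct, but note that the paper does not actually prove this lemma: it is stated with a bare citation to \cite[Proposition 8.20]{Jan96}, so there is no internal proof to compare against. What you supply is the standard argument together with the one genuinely new ingredient needed here, namely the transfer from the Drinfeld--Jimbo algebra $\U$ to the Drinfeld double $\tU$: the intertwining $\pi\circ\widetilde{T}_i=T_i\circ\pi$ plus the fact that $\pi$ is an isomorphism on $\tU^-$ correctly reduces the identity to Jantzen's statement, \emph{provided} one knows $\widetilde{T}_w(F_i)\in\tU^-$ (since $\pi$ is far from injective on all of $\tU$, this membership is essential and you rightly isolate it). Your primary route to that membership --- the positivity lemma $\widetilde{T}_{w'}(F_j)\in\tU^-$ for $w'(\alpha_j)\in\cR^+$, proved by the same induction as in $\U$ because the rank-one formulas of Proposition~\ref{prop:braid1} are identical --- is sound; your ``internal'' alternative is not self-contained as written, since knowing only that $\widetilde{T}_w(F_i)$ has $\Z\I$-degree $\alpha_{wi}$ does not place it in $\tU^-$ (the full weight space $\tU_{\alpha_{wi}}$ is much larger than $\tU^-_{\alpha_{wi}}$), so that paragraph should be read as a unit/integrality refinement on top of the positivity lemma rather than a replacement for it. The descent-based reduction to the rank-two parabolic, and the observation that $u(\alpha_i)$ must be simple because a simple root cannot be a positive combination of two positive roots, are both correct; the rank-two base case is exactly the computation in Jantzen, which both you and the paper ultimately cite.
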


\subsection{$\un{\text{Type  AI}_1}$}
In this case, 
$\cR^+(\bs_1)=\alpha_1$. Set $B_{\alpha_1}=B_1\in\tUi$. 

\subsection{$\un{\text{Type  AII}_3}$}
In this case, 
$\cR^+(\bs_2)=\{\alpha_2,\alpha_1+\alpha_2,\alpha_2+\alpha_3,\alpha_1+\alpha_2+\alpha_3\}$. Using the reduced expression $\bs_2=s_{2132}$ in Table~\ref{table:localSatake} and Lemma \ref{lem:Tiwi}, we have
\begin{align*}
\begin{split}
&F_{\alpha_2}=F_2,\qquad F_{\alpha_1+\alpha_2}=\tTD_2(F_1)=\tTD_1^{-1}(F_2),
\\
&F_{\alpha_2+\alpha_3}=\tTD_{21}(F_3)=\tTD_3^{-1}(F_2), \qquad F_{\alpha_1+\alpha_2+\alpha_3}= \tTD_{213}(F_2)=\tTD_{13}^{-1}(F_2).
\end{split}
\end{align*}
Set 
\begin{align*}
\begin{split}
B_{\alpha_2}=&B_2,\qquad B_{\alpha_1+\alpha_2}=\tTD_1^{-1}(B_2)=[F_1,B_2]_{q},
\\
B_{\alpha_2+\alpha_3}=&\tTD_3^{-1}(B_2)=[F_3,B_2]_q,
\qquad 
B_{\alpha_1+\alpha_2+\alpha_3}= \tTD_{13}^{-1}(B_2)=\big[F_3,[F_1,B_2]_q\big]_q.
\end{split}
\end{align*}
Obviously, $B_{\beta_i}\in \tUi$ for $\beta_i\in \cR^+(\bs_2)$.


\subsection{$\un{\text{Type  AIII}_{11}}$} 

We have $\cR^+(\bs_1)=\{\alpha_1,\alpha_2\}$. Let $B_{\alpha_1}=B_1$, $B_{\alpha_2}=B_2$.

\subsection{$\un{\text{Type  AIV},n\geq2}$ }

In this case, $\cR^+(\bs_1)=\{\beta_i\mid 1\leq i\leq 2n-1 \}$, where 
\begin{align*}
\beta_i=\begin{cases}
\alpha_1+\alpha_2+\cdots+\alpha_i, & \text{ for }1\leq i\leq n,
\\
\alpha_n+\alpha_{n-1}+\cdots+\alpha_{2n-i+1}, &\text{ for }n+1\leq i\leq 2n-1.
\end{cases}
\end{align*}
Using the reduced expression of $\bs_1$ in Table~\ref{table:localSatake}, we have
\begin{align*}
\begin{split}
&F_{\beta_i}=\tTD_{1\cdots(i-1)}(F_i)=\tTD_{2\cdots i}^{-1}(F_1),\qquad 1\leq i\leq n-1,
\\
&F_{\beta_{n}}=\tTD_{1\cdots(n-1)}(F_n)= [F_n,\tTD^{-1}_{2\cdots(n-1)}(F_{1})]_q,
\\
&F_{\beta_{n+i}}=\tTD_{1\cdots n \cdots (n-i+1)}(F_{n-i})=\tTD_{(n-1)\cdots(n-i+1) }^{-1} (F_n),\qquad 1\leq i\leq n-1.
\end{split}
\end{align*}
We define
\begin{align}
\begin{split}
    B_{\beta_i}=&\tTD_{2\cdots i}^{-1}(B_1),\qquad 1\leq i\leq n-1,
    \\
    B_{\beta_n}=&[B_n,\tTD_{2\cdots(n-1)}^{-1}(B_1)]_q=\bigg[B_n,\Big[\big[[[F_{n-1} ,F_{n-2}]_q \cdots,F_3]_q, F_2\big]_q,B_1\Big]_q\bigg]_q,
    \\
B_{\beta_{n+i}}=&\tTD_{(n-1)\cdots(n-i+1) }^{-1} (B_n),\qquad 1\leq i\leq n-1.
\end{split}
\end{align}

\subsection{$\un{\text{Type BII}, n\geq2}$} 

In this case, $\cR^+(\bs_1)=\{\beta_i\mid 1\leq i\leq 2n-1\}$, where
\begin{align}
\beta_i=\begin{cases}
\alpha_1+\alpha_2+\cdots +\alpha_i,& \text{ for }1\leq i\leq n,
\\
\alpha_1+\cdots +\alpha_{2n-i}+2\alpha_{2n+1-i} +\cdots +2\alpha_n  ,&\text{ for }n+1\leq i\leq 2n-1.
\end{cases}
\end{align}
Using the reduced expression of $\bs_1$ in Table~\ref{table:localSatake}, 
we have

\begin{align}
\begin{split}
&F_{\beta_i}=\tTD_{1\cdots (i-1)}(F_i)=\tTD^{-1}_{2\cdots i}(F_1),\qquad 1\leq i\leq n-1,
\\
&F_{\beta_n}=\tTD_{1\cdots(n-1)}(F_n)=[F_n,\tTD_{2\cdots(n-1)}^{-1}(F_1)]_q=\Big[\cdots\big[[F_n,F_{n-1}]_q,F_{n-2}\big]_q,\cdots,F_1\Big]_q,
\\
&F_{\beta_{n+i}}=\tTD_{1\cdots n\cdots(n-i+1)}(F_{n-i})=\tTD_{2\cdots n\cdots(n-i+1)}^{-1}(F_1),\qquad 1\leq i\leq n-1.
\end{split}
\end{align}

We define 
\begin{align*}
&B_{\beta_i}=\tTD_{2\cdots i}^{-1}(B_1),\qquad 1\leq i\leq n-1,
\\
&B_{\beta_n}=[F_n,\tTD_{2\cdots(n-1)}^{-1}(B_1)]_q=[F_n,B_{\beta_{n-1}}]_q,
\\
&B_{\beta_{n+i}}=\tTD_{2\cdots n\cdots(n-i+1)}^{-1}(B_1),\qquad 1\leq i\leq n-1.
\end{align*}

\subsection{$\un{\text{Type  CII},n\geq3}$ }

$\cR^+(\bs_2)=\{\beta_i\mid 1\leq i\leq 4n-5\}$, where 
\begin{align*}
	\beta_i= \begin{cases}  \alpha_2+\alpha_3+\cdots+\alpha_{i+1}, & \text { for } 1  \leq i \leq n-2, 
		\\ 
		2\alpha_2+2 \alpha_3+\cdots+2 \alpha_i+\alpha_{i+1}, & \text { for } i =n-1,
		\\
		\alpha_2+\cdots+\alpha_{2n-i-1}+2\alpha_{2n-i}+\cdots+2\alpha_{n-1}+\alpha_n, & \text { for } n \leq i \leq 2 n-3 ,
		 \\
		\alpha_1+2 \alpha_2+\cdots+2 \alpha_{n-1}+\alpha_n, & \text { for } i=2 n-2 ,
  \\ \alpha_1+\alpha_2+\cdots+\alpha_{i-2 n+3}, & \text { for } 2n-1 \leq i \leq 3n-4, 
		  \\ 
		2 \alpha_1+\cdots+2 \alpha_{n-1}+\alpha_n, & \text { for } i=3n-3,
		   \\
\alpha_1+\cdots+\alpha_{4n-3-i}+2\alpha_{4n-2-i}+\cdots+2\alpha_{n-1}+\alpha_n, & \text{ for } 3n-2\leq i\leq4n-5,\end{cases}
\end{align*}

Using the reduced expression of $\bs_1$ in Table~\ref{table:localSatake}, 
we have
\begin{align*}
    &F_{\beta_i}=\tTD_{2\cdots i}(F_{i+1})=\tTD_{3\cdots (i+1)}^{-1}(F_2),\qquad 1\leq i\leq n-2,
    \\
    &F_{\beta_{n-1}}=\tTD_{2\cdots (n-1)}(F_{n})=\frac{1}{[2]} \Big[\big[F_n, \tTD_{2\cdots (n-2)}(F_{n-1})\big]_{q^2},\tTD_{2\cdots (n-2)}(F_{n-1})\Big]
    \\
    &\quad\quad \quad=\frac{1}{[2]} \Big[\big[F_n, \tTD_{3\cdots (n-1)}^{-1}(F_{2})\big]_{q^2},\tTD_{3\cdots (n-1)}^{-1}(F_2)\Big],
    \\
    &F_{\beta_{n+i}}=\tTD_{2\cdots n\cdots(n-i)}(F_{n-i-1})=\tTD_{3\cdots n\cdots(n-i)}^{-1}(F_2), \qquad 0\leq i\leq n-3,
    \\
    &{F_{\beta_{2n-2}}=\tTD_{2\cdots n\cdots 2}(F_1)=\big[\tTD_1^{-1}(F_2),\tTD_{2\cdots n\cdots 3}(F_2)\big]_q=\big[\tTD_1^{-1}(F_2), F_{\beta_{2n-3}}\big]_q,}
    \\
     &F_{\beta_{i+2n-1}}=\tTD_{2\cdots n\cdots 21\cdots(i+1)}(F_{i+2})= \tTD_{13\cdots (i+2)}^{-1}(F_2),\qquad 0\leq i\leq n-3,
    \\
    &{F_{\beta_{3n-3}}=\tTD_{2\cdots n\cdots 212\cdots(n-1)}(F_{n})= \frac{1}{[2]} \tTD_{2\cdots n\cdots 212\cdots(n-2)}\Big(\big[[F_n,F_{n-1}]_{q^2},F_{n-1}\big]\Big)}
    \\
    &\qquad \qquad{=\frac{1}{[2]}\big[[F_n,F_{\beta_{3n-2}}]_{q^2},F_{\beta_{3n-2}}\big],}
    \\
    &F_{\beta_{i+3n-2}}=\tTD_{2\cdots n\cdots 21\cdots n\cdots (n-i)}(F_{n-i-1})=\tTD_{13\cdots n\cdots(n-i)}^{-1}(F_2),
    \qquad 0\leq i\leq n-3.
\end{align*}
We define
\begin{align} 
\begin{split}
&B_{\beta_i}= \tTD_{3\cdots (i+1)}^{-1}(B_2),\qquad 1\leq i\leq n-2,
    \\
    &B_{\beta_{n-1}} =\frac{1}{[2]} \Big[\big[F_n, B_{\beta_{n-2}}\big]_{q^2},B_{\beta_{n-2}}\Big],
    \\
    &B_{\beta_{n+i}}= \tTD_{3\cdots n\cdots(n-i)}^{-1}(B_2), \qquad 0\leq i\leq n-3,
    \\
    &B_{\beta_{2n-2}}=\big[\tTD_1^{-1}(B_2), B_{\beta_{2n-3}}\big]_q,
    \\
    &B_{\beta_{i+2n-1}}=  \tTD_{13\cdots (i+2)}^{-1}(B_2),\qquad 0\leq i\leq n-3,
    \\
    &B_{\beta_{3n-3}} =\frac{1}{[2]}\big[[F_n,B_{\beta_{3n-2}}]_{q^2},B_{\beta_{3n-2}}\big],
    \\
    &B_{\beta_{i+3n-2}} = \tTD_{13\cdots n\cdots(n-i)}^{-1}(B_2),
    \qquad 0\leq i\leq n-3.
\end{split}
\end{align}
\subsection{$\un{\text{Type  DII},n\geq4}$ }
In this case, $\cR^+(\bs_1)=\{\beta_i\mid 1\leq i\leq 2n-2\}$ where
\begin{align*}
\beta_i=
\begin{cases}
\alpha_1+\alpha_2+\cdots+\alpha_i, & \text{ for }1\leq i\leq n-1,
\\
\alpha_1+\alpha_2+\cdots+\alpha_{n-2}+\alpha_n , &\text{ for } i=n,
\\
\alpha_1+\cdots+\alpha_{2n-i-1}+2\alpha_{2n-i}+\cdots+2\alpha_{n-2} +\alpha_{n-1}+\alpha_n, &\text{ for } n+1\leq i\leq 2n-2.
\end{cases}
\end{align*}
Using the reduced expression of $\bs_1$ in Table~\ref{table:localSatake}, we have
\begin{align}
\begin{split}
&F_{\beta_i}=\tTD_{1\cdots(i-1)}(F_i)=\tTD_{2\cdots i}^{-1}(F_1),\qquad 1\leq i\leq n-1,
\\
&F_{\beta_{n}}=\tTD_{1\cdots(n-1)}(F_n)= \tTD_{2\cdots (n-2)\cdot n}^{-1}(F_1),
\\
&F_{\beta_{n+1}}=\tTD_{1\cdots n}(F_{n-2})=\tTD_{2\cdots n}^{-1}(F_1),
\\
&F_{\beta_{n+i}}=\tTD_{1\cdots n\cdot (n-2)\cdot(n-3)\cdots(n-i)}(F_{n-i-1})=\tTD_{2\cdots n\cdot (n-2)\cdot (n-3)\cdots (n-i)}^{-1}(F_1),\qquad 2\leq i\leq n-2.
\end{split}
\end{align}
We define 
\begin{align}
\begin{split}
&B_{\beta_i}=\tTD_{2\cdots i}^{-1}(B_1),\qquad 1\leq i\leq n-1,
\\
&B_{\beta_{n}}= \tTD_{2\cdots (n-2)\cdot n}^{-1}(B_1),
\\
&B_{\beta_{n+1}}=\tTD_{2\cdots n}^{-1}(B_1),
\\
&B_{\beta_{n+i}}=\tTD_{2\cdots n\cdot (n-2)\cdot (n-3)\cdots (n-i)}^{-1}(B_1),\qquad 2\leq i\leq n-2.
\end{split}
\end{align}

\subsection{$\un{\text{Type  }F_4}$ }

$\cR^+(\bs_4)=\{\beta_i\mid 1\leq i\leq 15\}$, where 
\begin{align*}
&\beta_1=\alpha_4,\quad \beta_2=\alpha_3+\alpha_4,\quad \beta_3=\alpha_2+2\alpha_3+2\alpha_4,\quad \beta_4=\alpha_2+\alpha_3+\alpha_4,\\
&\beta_5=\alpha_1+\alpha_2+2\alpha_3+2
\alpha_4,\quad \beta_6=\alpha_1+2\alpha_2+2\alpha_3+2
\alpha_4,\quad \beta_7=\alpha_1+\alpha_2+\alpha_3+
\alpha_4,
\\
&\beta_8=\alpha_1+2\alpha_2+3\alpha_3+2
\alpha_4,\quad \beta_9=\alpha_2+2\alpha_3+
\alpha_4,\quad \beta_{10}=\alpha_1+2\alpha_2+4\alpha_3+2
\alpha_4,\\
&\beta_{11}=\alpha_1+\alpha_2+2\alpha_3+
\alpha_4,\quad \beta_{12}=\alpha_1+3\alpha_2+4\alpha_3+2
\alpha_4,\\
&\beta_{13}=2\alpha_1+3\alpha_2+4\alpha_3+2
\alpha_4,\quad \beta_{14}=\alpha_1+2\alpha_2+2\alpha_3+
\alpha_4,\\
& \beta_{15}=\alpha_1+2\alpha_2+3\alpha_3+
\alpha_4.
\end{align*}
We have 
\begin{align*}
    &F_{\beta_1}=F_4,\qquad F_{\beta_2}=\tTD_4(F_3)=\tTD_3^{-1}(F_4),\qquad    
    \\
    &F_{\beta_3}=\tTD_{43}(F_2)=\frac{1}{[2]}\Big[\big[F_2,[F_3,F_4]_q\big]_{q^2},[F_3,F_4]_q\Big],\qquad
   F_{\beta_4}=\tTD_{432}(F_3)=\tTD_{32}^{-1}(F_4),\\
   &F_{\beta_5}=\tTD_{4323}(F_1)=\tTD_{1}^{-1}\tTD_{43}(F_2)=\tTD_1^{-1}(F_{\beta_3}),
    \\
    &F_{\beta_6}=\tTD_{43231}(F_2)={\tTD_{12}^{-1}\tTD_{34}^{-1}(F_2)}=[F_2,F_{\beta_5}]_q,\qquad 
    F_{\beta_7}=\tTD_{432312}(F_3)=\tTD_{321}^{-1}(F_4),
    \\
    &{F_{\beta_8}=\tTD_{4323123}(F_4)=[\tTD_{323}^{-1}(F_4),F_{\beta_7}]_q}, \qquad F_{\beta_9}= \tTD_{43231234}(F_3)=\tTD_{323}^{-1}(F_4),
    \\
    &F_{\beta_{10}}=\tTD_{432312343}(F_2)={\frac{1}{[2]}\big[[F_1,F_{\beta_9}]_{q^2},F_{\beta_9}\big]},
    \\
    &F_{\beta_{11}}=\tTD_{4323123432}(F_3)=\tTD_{3213}^{-1}(F_4), \qquad F_{\beta_{12}}={\tTD_{43231234323}(F_1)=[F_2,F_{\beta_{10}}]_q},
    \\
    &F_{\beta_{13}}={\tTD_{432312343231}(F_2)=\big[F_1,F_{\beta_{12}}\big]_q},
    \qquad F_{\beta_{14}}=\tTD_{4323123432312}(F_3)={\tTD_{32312}^{-1}(F_4)},
    \\
    &F_{\beta_{15}}=\tTD_{43231234323123}(F_4)=\tTD_{323123}^{-1}(F_4).
\end{align*}
We define 
\begin{align}
\begin{split}
&B_{\beta_1}=B_4,\qquad B_{\beta_{2}}=\tTD_{3}^{-1}(B_4),\qquad B_{\beta_{3}}=\frac{1}{[2]}\big[[F_2,B_{\beta_2}]_{q^2},B_{\beta_2}\big]
\\
&B_{\beta_{4}}=\tTD_{32}^{-1}(B_4),\qquad
B_{\beta_{5}}=\tTD_{1}^{-1}(B_{\beta_3})=[F_1,B_{\beta_3}]_q,\qquad
B_{\beta_{6}}=[F_2,B_{\beta_5}]_q,\qquad
\\
&B_{\beta_{7}}=\tTD_{321}^{-1}(B_4),\qquad
B_{\beta_{8}}=[\tTD_{323}^{-1}(B_4),B_{\beta_7}]_q,\qquad
B_{\beta_{9}}=\tTD_{323}^{-1}(B_4),\qquad
\\
&B_{\beta_{10}}=\frac{1}{[2]}\big[[F_1,B_{\beta_9}]_{q^2},B_{\beta_9}\big],\qquad
B_{\beta_{11}}=\tTD_{3213}^{-1}(B_4),\qquad
B_{\beta_{12}}=[F_2,B_{\beta_{10}}]_q,\qquad
\\
&B_{\beta_{13}}=[F_1,B_{\beta_{12}}]_q,\qquad
B_{\beta_{14}}=\tTD_{32312}^{-1}(B_4),\qquad
B_{\beta_{15}}=\tTD_{323123}^{-1}(B_4),\qquad
\end{split}
\end{align}

\begin{theorem}
\label{thm:rkone}
Let $(\I=\wI\cup\bI,\tau )$ be a Satake diagram of real rank 1 and $(\tU,\tUi)$ be the corresponding quantum symmetric pair. Write $\wI=\{i,\tau i\}$ and $\cR^+(\bs_i)=\{\beta_1,\cdots,\beta_r\}$. For each $1\leq k\leq r$, let $B_{\beta_k}$ be the root vectors in $\tUi$ constructed above. Then the leading term of $B_{\beta_k}$ is $F_{\beta_k}$.
\end{theorem}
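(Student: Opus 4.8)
The plan is to work with the algebra filtration on $\tUi$ underlying the isomorphism \eqref{eq:grade}, in which $\deg B_i=1$ for $i\in\I_\circ$ while every $E_j,F_j$ ($j\in\I_\bullet$) and every Cartan generator has degree $0$, and whose associated graded $\tU^{\imath,\gr}$ is identified via $\phi$ with $\tU^-\otimes\tU_\bullet^+\otimes\tU^{\imath0}$. For $x\in\tUi$ write $\mathrm{lt}(x)\in\tU^{\imath,\gr}$ for its principal symbol; the statement ``the leading term of $B_{\beta_k}$ is $F_{\beta_k}$'' is the assertion $\mathrm{lt}(B_{\beta_k})=\phi(F_{\beta_k}\otimes 1\otimes 1)$, where on the right $F_{\beta_k}\in\tU^-$ is the PBW root vector \eqref{eq:F-root}. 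Two observations are elementary. \textbf{(i)} $\mathrm{lt}(B_i)=\phi(F_i\otimes 1\otimes 1)$ for $i\in\I_\circ$, immediately from $B_i=F_i+\tTD_{\bw}(E_{\tau i})K_i'$ with the second summand of degree $0$ together with $\phi$ being an algebra isomorphism; and $\mathrm{lt}(F_j)=\phi(F_j\otimes 1\otimes 1)$ for $j\in\I_\bullet$ trivially. \textbf{(ii)} The restriction of $\phi$ to $\tU^-\otimes 1\otimes 1$ is a ring embedding of the domain $\tU^-$ into $\tU^{\imath,\gr}$; hence, whenever $x,y\in\tUi$ have leading terms in $\phi(\tU^-\otimes 1\otimes 1)$, their degrees add, $\mathrm{lt}(xy)=\mathrm{lt}(x)\,\mathrm{lt}(y)$ and $\mathrm{lt}([x,y]_{q^a})=[\mathrm{lt}(x),\mathrm{lt}(y)]_{q^a}$, provided the relevant expression on the $\tU^-$-side is nonzero, and the same holds for any iterated $q$-commutator.

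The only substantial input is \textbf{(iii)}: for $j\in\I_\bullet$ the automorphism $\tTD_j$, which restricts to $\tUi$ by Proposition~\ref{prop:braid-iQG}(b), preserves the filtration, and if $z\in\tUi$ has $\mathrm{lt}(z)=\phi(F_\gamma\otimes 1\otimes 1)$ for a positive root $\gamma$ of $\I_{\bullet,i}$ with $s_j(\gamma)\in\cR^+$, then $\mathrm{lt}(\tTD_j^{\pm1}(z))=\phi\big(\tTD_j^{\pm1}(F_\gamma)\otimes 1\otimes 1\big)$, where $\tTD_j^{\pm1}(F_\gamma)$ is again a PBW root vector of $\tU^-$ (Proposition~\ref{prop:PBW-QG}, Lemma~\ref{lem:Tiwi}). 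One obtains this from the explicit formula $\tTD_j(B_i)=\sum_{s=0}^{r}(-1)^s q_j^s F_j^{(s)} B_i F_j^{(r-s)}$, $r=-c_{ij}$, of Proposition~\ref{prop:braid-iQG}(b): taking $\mathrm{lt}$ and using (i)--(ii), the right-hand side becomes $\phi\big(\tTD_j(F_i)\otimes 1\otimes 1\big)$ by Lusztig's formula for $\tTD_j(F_i)$, and the general case follows by propagating this identity along the reduced words that occur. (Equivalently, one argues that $\phi$ intertwines the automorphism of $\tU^{\imath,\gr}$ induced by $\tTD_j$ with Lusztig's $\tTD_j$ on the factor $\tU^-$.)

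Granting (i)--(iii), the theorem becomes a case-by-case verification over the eight rank $1$ Satake diagrams. In every type, $B_{\beta_k}$ is defined by an expression obtained from one of the displayed formulas for $F_{\beta_k}$ in $\tU^-$ by replacing each $F_i$ ($i\in\I_\circ$) with $B_i$ and leaving the $F_j$ ($j\in\I_\bullet$), the symmetries $\tTD_w^{\pm1}$ ($w\in W_\bullet$), the scalars and the $q$-commutators as they are. Inducting on $k$ along the fixed reduced expression of $\bs_i$ and applying (i)--(iii), one checks that evaluating $\mathrm{lt}$ on the defining expression for $B_{\beta_k}$ reproduces the chosen expression for $F_{\beta_k}$, which gives $\mathrm{lt}(B_{\beta_k})=\phi(F_{\beta_k}\otimes 1\otimes 1)$. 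All nonvanishing hypotheses required by (ii)--(iii) are met because every intermediate quantity on the $\tU^-$-side ($F_{\beta_{k'}}$ for $k'<k$, the vectors $\tTD_w^{\pm1}(F_i)$, and the inner brackets appearing in the CII and $F_4$ formulas) is a nonzero scalar multiple of a PBW root vector. The main obstacle is (iii), the compatibility of the $\I_\bullet$-braid symmetries with the filtration and with $\phi$; once that is secured, the remainder is routine but type-dependent bookkeeping.
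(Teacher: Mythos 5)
Your argument is correct and is essentially a careful expansion of the paper's own (one-sentence) proof, which reads ``The statement follows from the explicit construction of root vectors $B_{\beta_k}$''; the paper implicitly relies on exactly the filtration on $\tUi$ and the algebra isomorphism $\phi$ of \eqref{eq:grade} that you spell out, together with the compatibility of the $\I_\bullet$-braid symmetries with that filtration, as confirmed by the remark immediately following the theorem stating $\phi(F_{\beta_k})=B_{\beta_k}$ in $\tU^{\imath,\gr}$.
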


\begin{proof}

The statement follows from the explicit construction of root vectors $B_{\beta_k}$.  

\end{proof}

\begin{remark}
We have $\phi(F_{\beta_k})=B_{\beta_k}$ where $\phi:\tU^-\otimes \tU^+_\bullet\otimes \tU^{\imath,\gr}\rightarrow \tU^{\imath,\gr}$ is the algebra homomorphism in \eqref{eq:grade}. 
\end{remark}

\section{PBW bases for $\imath$quantum groups}
\label{sec:PBW-Ui}

Now we consider a Satake diagram $(\I=\wI\cup\bI,\tau )$ of higher rank. Let $i\in \wI$. Recall the real rank 1 subdiagram $(\I_{\bullet,i}=\{i,\tau i\}\cup\I_\bullet,\tau)$. Let $\Ui_{[i]}$ be the real rank 1 $\imath$quantum group associated to the Satake diagram is $(\I_{\bullet,i}=\{i,\tau i\}\cup\I_\bullet,\tau)$. Clearly, $\tUi_{[i]}=\tUi_{[\tau i]}$. The algebra $\tUi_{[i]}$ is naturally identified with the subalgebra of $\tUi$ generated by $B_i,B_{\tau i},k_i,k_{\tau i}$ and $\tU_\bullet$. By the construction in Section~\ref{sec:root-rank1}, for any $i\in \wI,\beta\in \cR^+(\bs_i)$, we have the root vectors $B_\beta\in \tUi_{[i]}$.

Let $\bbw_0=\bs_{i_1}\bs_{i_2}\cdots \bs_{i_\ell}$ be a fixed reduced expression for the longest element $\bbw_0$ in $W^\circ$. For any $\beta\in \cR^+(\bbw_0)$, there exists a unique $1\leq j\leq \ell$ and a unique $\beta_0\in\cR^+(\bs_{i_j})$ such that $\beta=\bs_{i_1}\bs_{i_2}\cdots \bs_{i_{j-1}}(\beta_0)$. In this way, define 
\begin{align}\label{def:Bbeta}
    B_\beta:=\TT_{i_1}\TT_{i_2}\cdots \TT_{i_{j-1}}(B_{\beta_0}), \qquad \text{ for }\beta\in \cR^+(\bbw_0),
\end{align}
where $B_{\beta_0}\in \tUi_{[i_j]}$ is the root vector defined in Section~\ref{sec:root-rank1} for $\beta_0\in \cR^+(\bs_{i_j})$. 

\begin{proposition}
If $\beta=\alpha_i$ for some $i\in\wI$, then $B_\beta=B_i$.
\end{proposition}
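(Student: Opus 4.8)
The plan is to unwind the definition \eqref{def:Bbeta} in the special case $\beta = \alpha_i$ with $i \in \wI$. The key point is that $\alpha_i$ is a simple root of the relative root system, so in the decomposition $\cR^+(\bbw_0) \ni \beta = \bs_{i_1}\cdots\bs_{i_{j-1}}(\beta_0)$ the relevant data must be $j = 1$ (hence no braid operators are applied) and $\beta_0 = \alpha_i$, with $i_1 = i$. So first I would recall the standard fact about reduced expressions: for a reduced word $\bbw_0 = \bs_{i_1}\cdots\bs_{i_\ell}$ in $W^\circ$, the element $\bs_{i_1}\cdots\bs_{i_{k-1}}(\alpha_{i_k})$ is a positive relative root for each $k$, and $\bs_{i_1}\cdots\bs_{i_{j-1}}(\beta_0)$ is simple (i.e.\ equals some $\alpha_m$ with $m\in\wItau$, or rather lies among the $\bs$-simple roots) if and only if $j=1$ and $\beta_0$ is itself a simple relative root $\alpha_{i_1}$. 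Indeed if $j\ge 2$ then $\beta = \bs_{i_1}\cdots\bs_{i_{j-1}}(\beta_0)$ with $\beta_0\in\cR^+(\bs_{i_j})$; since $\bs_{i_1}$ sends exactly the one positive root $\alpha_{i_1}$ to a negative one and permutes the remaining positive roots, and $\beta\notin\{\alpha_{i_1}\}$ would have to be non-simple after applying $\bs_{i_1}^{-1}\cdots$, a short induction on $j$ shows the height (in the relative root system) of $\beta$ is $>1$. This pins down $j=1$, $i_1 = i$, and $\beta_0 = \alpha_i \in \cR^+(\bs_i)$.

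Next I would invoke the rank $1$ construction of Section~\ref{sec:root-rank1}: by inspection of all the rank $1$ cases (types AI$_1$, AII$_3$, AIII$_{11}$, AIV, BII, CII, DII, F$_4$) the root vector attached to a simple relative root $\alpha_i$ is literally $B_i = B_{\alpha_i}$ --- this is how $B_{\alpha_1}$, $B_{\alpha_2}$, etc.\ are defined at the top of each subsection. (In types AII$_3$, CII, FII the distinguished node is $i$ itself with $\cR^+(\bs_i)$ listing $\alpha_i$ first, and $B_{\alpha_i} := B_i$.) Hence $B_{\beta_0} = B_{\alpha_i} = B_i$. Since $j=1$ the product of braid operators $\TT_{i_1}\cdots\TT_{i_{j-1}}$ in \eqref{def:Bbeta} is empty, i.e.\ the identity, so $B_\beta = B_{\beta_0} = B_i$.

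One subtlety to address: a priori the reduced expression of $\bbw_0$ is fixed, so I should check that the conclusion is independent of which reduced expression was chosen --- but that is automatic here, because we have just shown that \emph{any} reduced expression forces $j=1$ and $i_1 = i$ whenever $\beta = \alpha_i$ is simple (the first letter of a reduced word for $w$ can be any $\bs_m$ with $\ell(\bs_m w) < \ell(w)$, but the constraint $\beta = \bs_{i_1}\cdots\bs_{i_{j-1}}(\beta_0) = \alpha_i$ together with $j=1$ gives $\beta_0 = \alpha_{i_1}$, and $\beta_0 = \alpha_i$ forces $i_1 = i$). I would also remark that, since $i$ and $\tau i$ give the same rank $1$ subdiagram and $\tUi_{[i]} = \tUi_{[\tau i]}$, there is no ambiguity in the meaning of $B_i$. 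The main obstacle --- really the only nontrivial step --- is the combinatorial claim that a simple relative root can only occur in position $j=1$ of the convex order induced by a reduced expression, which is a direct consequence of the standard properties of $\cR^+(w)$ recalled before Lemma~\ref{lem:positive-roots-rankone}; everything else is bookkeeping against the tables of Section~\ref{sec:root-rank1}.
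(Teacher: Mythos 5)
Your proof has a fatal gap: the combinatorial claim that $\beta=\alpha_i$ forces $j=1$ is false. Already in split type $A_2$ (type AI with $\I_\bullet=\emptyset$, $\tau=\Id$), taking $\bbw_0=\bs_1\bs_2\bs_1=s_1s_2s_1$, the simple root $\alpha_2=\bs_1\bs_2(\alpha_1)$ occurs at position $j=3$ with $\beta_0=\alpha_1$. The standard fact is only that the \emph{first} root $\alpha_{i_1}$ of the convex order attached to a reduced expression is simple; the converse — that a simple root can only occur first — is wrong, and your ``short induction on $j$'' breaks because applying successive $\bs_{i_k}^{-1}$ can decrease height back down to $1$ (in the example, $\bs_1^{-1}(\alpha_2)=\alpha_1+\alpha_2$ but then $\bs_2^{-1}(\alpha_1+\alpha_2)=\alpha_1$). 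You also conflate two root systems: $\alpha_i$ with $i\in\wI$ is a simple root of the \emph{ambient} system $\cR$, the decomposition $\beta=\bs_{i_1}\cdots\bs_{i_{j-1}}(\beta_0)$ takes place in $\cR^+(\bbw_0)\subset\cR^+$, and $\bs_{i_1}$ is not a simple reflection of $W$ there — it sends $\ell(\bs_{i_1})$ positive roots of $\cR$ to negative ones, not just one.

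Because $j>1$ genuinely occurs, the real content of the proposition is precisely the part your argument skips. The paper's proof instead shows that $\beta_0$ must equal $\alpha_{i_j}$ or $\alpha_{\tau i_j}$ (if $\beta_0$ had a nonzero coefficient on some black simple root, then so would $\beta$, since each $\bs_{i_k}$ permutes $\cR_\bullet$ — contradicting $\beta=\alpha_i$), so that $B_{\beta_0}=B_{i_j}$ or $B_{\tau i_j}$ by the rank~$1$ construction, and then invokes \cite[Theorem~7.13]{WZ23}, the relative analogue of Lemma~\ref{lem:Tiwi} (``$\widetilde{T}_w(F_i)=F_{wi}$''), to conclude $\TT_{i_1}\cdots\TT_{i_{j-1}}(B_{i_j})=B_i$. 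This external input on how the relative braid group symmetries act on the generators $B_i$ is indispensable and is entirely absent from your argument; without it the case $j>1$ cannot be handled. The part of your write-up identifying $B_{\alpha_{i_j}}=B_{i_j}$ from the rank~$1$ tables is fine, but it only covers the trivial half of the statement.
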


\begin{proof}
Write $\beta=\bs_{i_1}\bs_{i_2}\cdots \bs_{i_{j-1}}(\beta_0)$ for $\beta_0\in\cR^+(\bs_{i_j})$. We claim that $\beta_0$ must be $\alpha_{i_j}$ or $\alpha_{\tau i_j}$. Otherwise, write $\beta_0=c \alpha_{i_j} + c' \alpha_{\tau i_j} +\sum_{r\in \bI} c_r\alpha_{r} $ where $c,c',c_r$ are coefficients, and then we must have $c_r\neq 0$ for some $r\in \bI$. Recall that $\bs_{i_k}=w_{\bullet,i_k}\bw $; then $\bs_{i_k}$ permutes $\bI$. This implies that, if we write $\beta$ in terms of simple roots, the multiplicity of $\alpha_{r'}$ must be nonzero for some $r'\in \bI$, which contradicts the assumption of $\beta$. 

By our rank 1 construction in Section~\ref{sec:root-rank1}, $B_{\alpha_{i_j}}=B_{i_j}$ and $B_{\alpha_{\tau i_j}}=B_{\tau i_j}$. Using the claim and \cite[Theorem~7.13]{WZ23}, we conclude that $B_\beta=B_i$ as desired.
\end{proof}

By using the reduced expression of $\bs_{i_j}$ as Table \ref{table:localSatake} for $1\leq j\leq \ell$, we obtain a reduced expression $\bbw_0=s_{k_1}s_{k_2}\cdots s_{k_{t}}$ in $W$. 
This reduced expression of $\bbw_0$ induces a total order on $\cR^+(\bbw_0)$. We define, for any $\ba=(a_\beta)_{\beta\in\cR^+(\bbw_0)}\in\N^{\ell(\bbw_0)}$), 
\begin{align}
    B^{\ba}:=\prod_{\beta\in\cR^+(\bbw_0)} B_{\beta}^{a_\beta}.
\end{align}
For any $\beta\in\cR^+(\bbw_0)$, we define $F_{\beta}$ in the same way as \eqref{eq:F-root} by using this reduced expression of $\bbw_0$, and define $F^{\ba}=\prod_{\beta\in\cR^+(\bbw_0)} F_{\beta}^{a_\beta}$. 

\begin{lemma}
\label{lem:leadingPBW}
Keep notations as above. For  $\beta\in\cR^+(\bbw_0)$, the leading term of $B_\beta$ in $\tU$ is $F_\beta$. 
\end{lemma}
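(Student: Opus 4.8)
The plan is to prove Lemma~\ref{lem:leadingPBW} by tracking what happens to leading terms under the relative braid group symmetries $\TT_{i_k}$. Recall from \eqref{def:Bbeta} that $B_\beta = \TT_{i_1}\cdots\TT_{i_{j-1}}(B_{\beta_0})$ with $\beta = \bs_{i_1}\cdots\bs_{i_{j-1}}(\beta_0)$ and $\beta_0\in\cR^+(\bs_{i_j})$, and from \eqref{eq:F-root} applied to the induced reduced expression of $\bbw_0$ in $W$ that $F_\beta = \widetilde{T}_{i_1}\cdots\widetilde{T}_{i_{j-1}}(F_{\beta_0})$ (using Lemma~\ref{lem:Tiwi} to match the block $\bs_{i_j}$ reduced word with the $\tT$-action). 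By Theorem~\ref{thm:rkone} we already know the leading term of $B_{\beta_0}$ in the filtration associated to \eqref{eq:grade} is $F_{\beta_0}$; equivalently $\phi(F_{\beta_0}) = B_{\beta_0}$ in $\tU^{\imath,\gr}$. So the whole lemma reduces to the following compatibility: \emph{applying $\TT_{i_k}$ to an element of $\tUi$ and then passing to the graded, one gets the same thing as first passing to the graded and then applying (the graded analogue of) $\widetilde{T}_{i_k}$ on $\tU^-$.}

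The key steps, in order. First, I would make precise the filtration on $\tUi$ from \cite[\S2.4--2.5]{KY21} recalled around \eqref{eq:grade}: $\deg B_i = 1$ for $i\in\I_\circ$, everything in $\tU^{\imath0}\tU_\bullet$ in degree $0$, and $\gr\tUi \cong \tU^-\otimes\tU_\bullet^+\otimes\tU^{\imath0}$ via $\phi$. Under this identification the ``leading term of $B_\beta$ is $F_\beta$'' means precisely $\gr(B_\beta) = \phi(F_\beta\otimes 1\otimes 1)$. Second — and this is the heart — I would establish that for each $i\in\I_{\circ,\tau}$ the automorphism $\TT_i$ respects this filtration and that the induced automorphism on $\gr\tUi = \tU^-\otimes\tU_\bullet^+\otimes\tU^{\imath0}$ restricts on the $\tU^-$-factor to $\widetilde{T}_{\bs_i}$ (or rather to $\tT_{\bs_i}$, the rescaled version, composed with $\phi$). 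The tool for this is the defining intertwining relation \eqref{eq:compTT}: $\TT_i^{-1}(x)\fX_i = \fX_i\,\tT_{\bs_i}^{-1}(x)$ for all $x\in\Ui$, together with the fact from Proposition~\ref{prop:Kmatrix} that $\fX_i = \widetilde{\Upsilon}_i\in\tU^+$ with $\widetilde{\Upsilon}_i^0 = 1$ and $\widetilde{\Upsilon}_i^\mu\in\tU^+_\mu$. Since $\widetilde{\Upsilon}_i$ lies in the degree-$0$ part of the filtration (it is a sum of monomials in $E_j$, $j\in\I_{\bullet,i}$, which are degree $0$) and has graded leading term $1$, conjugation by $\widetilde{\Upsilon}_i$ is the identity on $\gr$; so \eqref{eq:compTT} descends to $\gr(\TT_i^{-1}(x)) = \gr(\tT_{\bs_i}^{-1}(x))$. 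One then checks $\tT_{\bs_i}$ preserves the filtration — it is built from Lusztig's $\widetilde{T}_j$ which send $F_j\mapsto$ (something in degree $0$ or $1$ appropriately) — and that its graded action on the $\tU^-$-factor is exactly $\widetilde{T}_{\bs_i}$ restricted to $\tU^-$ (this is essentially because on $\tU^-$ the rescaling $\widetilde{\Psi}_{\bvs_\diamond}$ only affects $\tK$'s and $E$'s, not $F$'s). Third, I would run the obvious induction on $j$: assuming $\gr(\TT_{i_2}\cdots\TT_{i_{j-1}}(B_{\beta_0})) = \phi(F_{\beta'})$ where $\beta' = \bs_{i_2}\cdots\bs_{i_{j-1}}(\beta_0)$, apply the compatibility with $i_1$ to get $\gr(B_\beta) = \phi(\widetilde{T}_{i_1}(F_{\beta'})) = \phi(F_\beta)$, using $\beta = \bs_{i_1}(\beta')$ and $\widetilde{T}_{i_1}(F_{\beta'}) = F_\beta$ from the theory of PBW bases (Proposition~\ref{prop:PBW-QG} / Lemma~\ref{lem:Tiwi}).

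The main obstacle I anticipate is the second step: carefully verifying that $\TT_i$ is a \emph{filtered} automorphism and pinning down its graded piece. The relation \eqref{eq:compTT} is stated only for $x\in\Ui$ (not all of $\tUi$), so I would need either to extend it to $\tUi$ using the explicit formula $\TT_i(x) = \tT_{\bs_i}(\widetilde{\Upsilon}_i)^{-1}\tT_{\bs_i}(x)\tT_{\bs_i}(\widetilde{\Upsilon}_i)$ quoted after Proposition~\ref{prop:braid-iQG}, or to argue directly from that conjugation formula. The conjugation formula is in fact cleaner for this purpose: $\tT_{\bs_i}$ is a composition of Lusztig symmetries which I must show is filtered with the right graded behaviour, and $\tT_{\bs_i}(\widetilde{\Upsilon}_i)$ must be shown to still lie in the degree-$0$ part with graded leading term a unit (indeed $1$), so that conjugating by it is trivial on $\gr$. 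Checking these filtration statements for $\tT_{\bs_i}$ requires knowing that $\bs_i$, written as a reduced word in $W$ using $\I_{\bullet,i}$, moves the relevant $F$-weight spaces correctly — this is where the combinatorics of Lemma~\ref{lem:positive-roots-rankone}(2) ($\cR^+_{\bullet,i} = \cR^+_\bullet\sqcup\cR^+(\bs_i)$) enters, ensuring that $\tT_{\bs_i}$ sends the $\tU^-_\bullet$-part and $\tU_\bullet^+$-part of $\gr$ into themselves (up to the $\theta$-twist that produces the $\tU_\bullet^+$ factor) and acts on the transverse $\tU^-$-directions as $\widetilde{T}_{\bs_i}$. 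Once this bookkeeping is done the induction is immediate.
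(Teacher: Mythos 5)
Your overall architecture matches the paper's: rank-1 base case from Theorem~\ref{thm:rkone}, the intertwining of $\TT_i$ with $\tT_{\bs_i}$ via the quasi $K$-matrix, the fact that $\widetilde{\Upsilon}_i=1+(\text{higher order})$, and reducedness of $\bbw_0=\bs_{i_1}\cdots\bs_{i_\ell}$ to keep the relevant roots positive. But your central step has a genuine gap. You need control of the \emph{forward} symmetries $\TT_{i_k}$, whereas both available tools — the relation \eqref{eq:compTT} and the conjugation formula — give clean leading-term control only for $\TT_i^{-1}$: there the conjugating element is $\widetilde{\Upsilon}_i$ itself, which lies in $1+\sum_{\mu>0}\tU^+_\mu$ and hence only contributes strictly lower-weight corrections. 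Your fallback, conjugation by $\tT_{\bs_i}(\widetilde{\Upsilon}_i)$ for $\TT_i$, does not work as you describe: $\widetilde{\Upsilon}_i^\mu$ is supported on $\mu=k(\alpha_i-\theta\alpha_i)$, and $\bs_i$ sends this to $-\mu$, so $\tT_{\bs_i}(\widetilde{\Upsilon}_i)$ is \emph{not} in $\tU^+$ and is not ``$1$ plus lower order''; its nontrivial components can raise the $F$-weight, so conjugating by it is not trivial on leading terms. The paper's essential maneuver, which you omit, is to apply $\sigma^\imath$ first and use $\TT_i=\sigma^\imath\circ\TT_i^{-1}\circ\sigma^\imath$ (equation \eqref{eq:inverse}), so that $\sigma^\imath(B_\beta)=\TT_{i_1}^{-1}\cdots\TT_{i_{j-1}}^{-1}(\sigma^\imath(B_{\beta_0}))$ becomes a single conjugation by the partial quasi $K$-matrix $\tfX_{\bbw}$, shown to lie in $1+\sum_{\mu>0}\tU^+_\mu$ using reducedness.

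Two further inaccuracies in your setup. First, $\widetilde{\Upsilon}_i$ is not ``degree $0$'' in the \cite{KY21} filtration: it does not lie in $\tUi$ at all, and its components involve $E_i,E_{\tau i}$ for the white nodes $i,\tau i\in\I_{\bullet,i}$, not only $E_j$ with $j\in\I_\bullet$. The leading-term bookkeeping must therefore be done in $\tU$ with its weight grading (as the paper does, working modulo $\sum_{\mu<\beta}\tU_\mu$), not inside $\gr\tUi$. Second, the clean statement ``the induced automorphism on $\gr$ restricts on the $\tU^-$-factor to $\widetilde{T}_{\bs_i}$'' cannot hold globally, since $\widetilde{T}_{\bs_i}$ does not preserve $\tU^-$ (and $\tT_{\bs_i}$ does not preserve $\tUi$, so the two sides do not even live in the same graded object). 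What is true, and what the paper proves as its final Claim, is a weight-specific statement: using the explicit form $B_{\beta_0}\in F_{\beta_0}+\sum_{\mu\in I_{\beta_0}}\tU_\mu$ from Section~\ref{sec:root-rank1} and the fact that $\bbw'=\bs_{i_1}\cdots\bs_{i_{j-1}}$ sends every $\mu\in I_{\beta_0}$ to a weight $\leq\beta$ with equality only for $\mu=\beta_0$, one gets that the leading term of $\tT_{\bs_{i_1}}^{-1}\cdots\tT_{\bs_{i_{j-1}}}^{-1}(\sigma^\imath(B_{\beta_0}))$ is $\sigma(F_\beta)$. You gesture at this combinatorial input but do not supply it; without it and without the $\sigma^\imath$ reversal, the induction does not close.
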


\begin{proof}
By definition \eqref{def:Bbeta}, we know 
\begin{align*}
    B_\beta=\TT_{i_1}\TT_{i_2}\cdots \TT_{i_{j-1}}(B_{\beta_0}), \qquad \text{ for unique }\beta_0\in \cR^+(\bs_{i_j}).
\end{align*}
If $j=1$, the result follows from Theorem~\ref{thm:rkone}. 

For $j>1$, in order to use \eqref{eq:compTT},  we shall consider $\sigma^\imath(B_\beta)$, which equals to $\TT_{i_1}^{-1}\TT_{i_2}^{-1}\cdots \TT_{i_{j-1}}^{-1}(\sigma^\imath(B_{\beta_0}))$ by \eqref{eq:inverse}. It suffices to show that the leading term of $\sigma^\imath(B_\beta)$ is $\sigma(F_\beta)$. Then the desired statement follows from the definitions of $\sigma^\imath$ and $\sigma$. 

Using \eqref{eq:compTT}, we have 
\begin{align}
\TT_i^{-1}(x)  =\tfX_i \cdot \tT_{\bs_i}^{-1}(x)\cdot \tfX_i^{-1},\qquad \forall x\in \tUi.
\end{align}
Let $\bbw=\bs_{i_1}\cdots\bs_{i_{j}}$ and $\tfX_{\bbw}$ be the partial quasi $K$-matrix defined by 
\begin{align}
\tfX_{\bbw}=\widetilde{\fX}_{i_1}\cdot \tT_{\bs_{i_1}}^{-1}(\widetilde{\fX}_{i_2})\cdots \tT_{\bs_{i_1}}^{-1}\cdots \tT_{\bs_{i_{j-1}}}^{-1}(\widetilde{\fX}_{i_{j}}).
\end{align}
(here $\tfX_w$ differs from those in \cite[Definition 3.13]{DK19} and \cite[Section 8.1]{WZ23} by $\sigma$.) Using \eqref{eq:compTT}, we have
\begin{align} 
\notag
\sigma^\imath(B_\beta)&=\TT_{i_1}^{-1}\TT_{i_2}^{-1}\cdots \TT_{i_{j-1}}^{-1}\big(\sigma^\imath(B_{\beta_0})\big)
\\\label{eq:lower}
&=\tfX_{\bbw} \cdot \tT^{-1}_{\bs_{i_1}}\tT^{-1}_{\bs_{i_2}}\cdots \tT_{\bs_{i_{j-1}}}^{-1}\big(\sigma^\imath(B_{\beta_0})\big) 
\cdot\tfX_{\bbw}^{-1}.
\end{align}

By Proposition~\ref{prop:Kmatrix}, for $i\in \wI$, $\tfX_i=\sum_{\mu\in \N \I_{\bullet,i}} \tfX_i^\mu$ where $\tfX_i^0=1,\tfX_i^\mu\in \tU^+_\mu$. Since $\bbw= \bs_{i_1}\bs_{i_2}\cdots \bs_{i_{j}}\in W^\circ $ is a reduced expression, the component $\tT_{\bs_{i_1}}^{-1}\cdots \tT_{\bs_{i_{k-1}}}^{-1}(\widetilde{\fX}_{i_{k}}^\mu)$ of a factor for $\tfX_{\bbw}$ lies in $ \tU^+$ for any $1\leq k\leq j$.
Hence, $\tfX_{\bbw}$ admits the form 
\begin{align}
\label{eq:tfXw+}
\tfX_{\bbw}=\sum_{\mu\in\N\I}\tfX_{\bbw}^\mu,\qquad  \tfX_{\bbw}^0=1,\;\tfX_{\bbw}^\mu\in \tU^+_\mu.
\end{align}
Using 
\eqref{eq:lower} and \eqref{eq:tfXw+}, we know that
$$\sigma^\imath(B_\beta)=\TT_{i_1}^{-1}\TT_{i_2}^{-1}\cdots \TT_{i_l}^{-1}\big(\sigma^\imath(B_{\beta_0})\big) \in \tT^{-1}_{\bs_{i_1}}\tT^{-1}_{\bs_{i_2}}\cdots \tT_{\bs_{i_{j-1}}}^{-1}\big(\sigma^\imath(B_{\beta_0})\big)
+\sum_{\mu<\beta} \tU_\mu.$$
i.e., the leading term of $\sigma^\imath(B_\beta)$ is the same as the leading term of $\tT^{-1}_{\bs_{i_1}}\tT^{-1}_{\bs_{i_2}}\cdots \tT_{\bs_{i_{j-1}}}^{-1}\big(\sigma^\imath(B_{\beta_0})\big)$.

{\bf Claim:}  $\tT^{-1}_{\bs_{i_1}}\tT^{-1}_{\bs_{i_2}}\cdots \tT_{\bs_{i_{j-1}}}^{-1}\big(\sigma^\imath(B_{\beta_0})\big)\in \sigma(F_\beta)+\sum_{\mu< \beta} \tU_\mu$.

If the claim holds, then
the leading term of $\sigma^\imath(B_\beta)$ is $\sigma(F_\beta)$ as desired. 

It remains to prove the claim.
Note that $i_j\in\I_\circ$, and $B_{i_j}=F_{i_j}+\tTD_{\bw}( E_{\tau i_j}) K_{i_j}'$. By the construction of $B_{\beta_0}$ for $\beta_0\in \cR^+(\bs_{i_j})$ in Section \ref{sec:root-rank1}, we know that \begin{align}
\label{eq:Bgamma}
    B_{\beta_0}\in F_{\beta_0}+\sum_{\mu\in I_{\beta_0}} \tU_\mu,
\end{align}
where 
$$I_{\beta_0}=\{{\beta_0}-aw_\bullet(\alpha_{ i_j})-bw_\bullet(\alpha_{ \tau i_j})\mid a,b\in\N,\text{ either }a\neq 0 \text{ or }b\neq0\}.$$ 
Let $\bbw'=\bs_{i_1}\bs_{i_2}\cdots\bs_{i_{j-1}}$. 
Note that $\bbw'(\alpha_{i_j}),\bbw'(\alpha_{\tau i_j})\in\cR^+\setminus\cR^+_\bullet$ since $\bbw_0= \bs_{i_1}\bs_{i_2}\cdots \bs_{i_\ell}$ is a reduced expression. Since $\bbw'(\bw(\alpha_{i_j}))=w_\bullet\bbw'(\alpha_{i_j})\in w_\bullet(\cR^+\setminus\cR^+_\bullet)\subseteq \cR^+$,
we know $\bbw'(\mu)\leq \bbw'(\beta_0)=\beta$ for any $\mu\in I_{\beta_0}$, and the equality holds if and only if $\mu=\beta_0$. 
Using \eqref{eq:Bgamma}, we know 
$$\tT^{-1}_{\bs_{i_1}}\tT^{-1}_{\bs_{i_2}}\cdots \tT_{\bs_{i_{j-1}}}^{-1}\big(\sigma^\imath(B_{\beta_0})\big)\in \tT^{-1}_{\bs_{i_1}}\tT^{-1}_{\bs_{i_2}}\cdots \tT_{\bs_{i_{j-1}}}^{-1}\big(\sigma(F_{\beta_0})\big)+\sum_{\mu<\beta} \tU_\mu.$$
Note that $\sigma(F_\beta)=\tT^{-1}_{\bs_{i_1}}\tT^{-1}_{\bs_{i_2}}\cdots \tT_{\bs_{i_{j-1}}}^{-1}\big(\sigma(F_{\beta_0})\big)$. The claim follows.
\end{proof}

The following corollary follows from Lemma \ref{lem:leadingPBW}. 

\begin{corollary}
\label{cor:Ba}
 For  $\beta\in\cR^+(\bbw_0),\ba\in\N^{\ell(\bbw_0)}$, the leading term of $B_\beta^{\ba}$ is $F_\beta^\ba$ in $\tU$.
\end{corollary}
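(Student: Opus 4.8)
The plan is to reduce to Lemma~\ref{lem:leadingPBW} by a formal argument using only the multiplicativity of the weight grading $\tU=\bigoplus_{\lambda\in\Z\I}\tU_\lambda$, namely $\tU_\lambda\tU_\mu\subseteq\tU_{\lambda+\mu}$; for $\lambda,\mu\in\Z\I$ write $\lambda\le\mu$ (resp. $\lambda<\mu$) when $\mu-\lambda\in\N\I$ (resp. $\mu-\lambda\in\N\I\setminus\{0\}$), as in the proof of Lemma~\ref{lem:leadingPBW}. First I would record that lemma in the form
\[
B_\beta\in F_\beta+\sum_{\mu<\beta}\tU_\mu\qquad(\beta\in\cR^+(\bbw_0)),
\]
where $F_\beta\in\tU^-_\beta$ is the root vector attached via \eqref{eq:F-root} to the fixed reduced expression $\bbw_0=s_{k_1}\cdots s_{k_t}$ of $\bbw_0$ in $W$.

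Next I would expand the ordered monomial. Put $\nu:=\sum_{\beta\in\cR^+(\bbw_0)}a_\beta\beta$ and write $B_\beta=F_\beta+r_\beta$ with $r_\beta\in\sum_{\mu<\beta}\tU_\mu$. Substituting into $B^{\ba}=\prod_{\beta\in\cR^+(\bbw_0)}B_\beta^{a_\beta}$, taken in the prescribed order, and multiplying out gives a sum of products in which each of the $\sum_\beta a_\beta$ factors contributes either $F_\beta$ (of weight $\beta$) or a homogeneous piece of $r_\beta$ (of weight $<\beta$). By multiplicativity of the grading, each such product is homogeneous of weight $\le\nu$, with equality exactly when no piece of any $r_\beta$ is used; the unique such product is $F^{\ba}=\prod_{\beta\in\cR^+(\bbw_0)}F_\beta^{a_\beta}\in\tU^-_\nu$. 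Hence $B^{\ba}\in F^{\ba}+\sum_{\mu<\nu}\tU_\mu$.

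Finally I would check $F^{\ba}\ne0$, so that $F^{\ba}$ genuinely is the top-weight component and is not absorbed into lower weights. For this I would extend $s_{k_1}\cdots s_{k_t}$ to a reduced expression of $w_0$ by appending a reduced expression of $w_\bullet$ — legitimate since $w_0=\bbw_0w_\bullet$ with $\ell(w_0)=\ell(\bbw_0)+\ell(w_\bullet)$, and harmless because the $F_\beta$ for $\beta\in\cR^+(\bbw_0)$ depend only on an initial segment of the word — and then apply Proposition~\ref{prop:PBW-QG}: the resulting $F_\beta$ for $\beta\in\cR^+=\cR^+(w_0)$ form a PBW basis of $\tU^-$, so the monomials $F^{\ba}$ (with zero exponents outside $\cR^+(\bbw_0)$) are linearly independent, hence nonzero. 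Combined with the previous step this shows the leading term of $B^{\ba}$ is $F^{\ba}$, as claimed. Once Lemma~\ref{lem:leadingPBW} is available there is essentially nothing hard here; the two points needing care are that the error $r_\beta$ strictly lowers the weight — the content of Lemma~\ref{lem:leadingPBW}, which itself rests on the explicit rank~$1$ root vectors of Section~\ref{sec:root-rank1} and the intertwining relation~\eqref{eq:compTT} for $\TT_i$ — and the nonvanishing of $F^{\ba}$, where Proposition~\ref{prop:PBW-QG} enters.
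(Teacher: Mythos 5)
Your proof is correct and takes essentially the same approach as the paper's; the paper gives no written argument and merely states the corollary follows from Lemma~\ref{lem:leadingPBW}, and your expansion $B_\beta=F_\beta+r_\beta$ with weight-strict error terms, together with the nonvanishing of $F^{\ba}$ via the PBW basis of Proposition~\ref{prop:PBW-QG}, is the intended direct deduction. (You also correctly read the stray subscript $\beta$ in the statement as a typo: the claim is about $B^{\ba}$ and $F^{\ba}$.)
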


Given a reduced expression of $w_\bullet=s_{j_1}s_{j_2}\cdots s_{j_r}$ of the longest element $w_\bullet$ in $W_\bullet$. Then $\cR^+_\bullet=\cR^+(w_\bullet)$, and $r=|\cR^+_{\bullet}|$. For any $\gamma\in \cR^+_\bullet$, by using $\widetilde{T}_i$ ($i\in\I_\bullet$), we can define $F_\gamma$ and $E_\gamma$ in $\tU_\bullet$ as Proposition \ref{prop:PBW-QG}. Moreover, given a total order on $\cR^+_\bullet$, we define for any $\bc=(c_{\gamma})_{\gamma\in\cR^+_\bullet}\in\N^{r}$ that 
\begin{align}
F_\bullet^{\bc}:=\prod_{\gamma\in\cR^+_\bullet}F_\gamma^{c_\gamma},\qquad E_\bullet^{\bc}:=\prod_{\gamma\in\cR^+_\bullet}E_\gamma^{c_\gamma}.
\end{align}
Then we have the monomials $F_\bullet^{\bc}E_\bullet^{\bd}$ ($\bc,\bd\in \N^{r}$) form a $\tU^0_\bullet$-basis of $\tU_\bullet$; cf. Proposition \ref{prop:PBW-QG}.  

\begin{theorem}
\label{thm:iPBW}
Let $\bbw_0=\bs_{i_1}\bs_{i_2}\cdots\bs_{i_\ell}$ be a reduced expression for the longest element $\bbw_0$ in $W^\circ$, and $\bw=s_{j_1}s_{j_2}\cdots s_{j_r}$ a reduced expression of the longest element $w_\bullet$ in $W_\bullet$. Then the monomials 
$B^\ba F_\bullet^{\bc} E_\bullet^{\bd}$ ($\ba\in \N^{\ell(\bbw_0)},\bc,\bd\in\N^{r}$) 
form a $\tU^{\imath0}$-basis of $\tUi$.
\end{theorem}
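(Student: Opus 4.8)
The plan is to deduce Theorem~\ref{thm:iPBW} from the graded isomorphism \eqref{eq:grade} together with the leading-term computations already in hand. Recall that by \cite[\S2.4--2.5]{KY21} the algebra $\tUi$ carries an $\N$-filtration (by the total degree in the $B_i$, $i\in\I_\circ$) whose associated graded $\tU^{\imath,\gr}$ is isomorphic, via $\phi$ in \eqref{eq:grade}, to $\tU^-\otimes\tU_\bullet^+\otimes\tU^{\imath0}$. The strategy is: (i) show the images of the monomials $B^\ba F_\bullet^{\bc} E_\bullet^{\bd}$ in $\gr\,\tUi$ form a $\tU^{\imath0}$-basis; (ii) conclude by a standard filtered-to-graded lifting argument that the monomials themselves form a $\tU^{\imath0}$-basis of $\tUi$.

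For step (i), the key input is Corollary~\ref{cor:Ba}: the leading term of $B^\ba$ in $\tU$ is $F^\ba=\prod_{\beta\in\cR^+(\bbw_0)}F_\beta^{a_\beta}$. More precisely, $B_\beta$ has filtration degree $\mathrm{ht}(\beta)$ in the $B$-grading and its symbol in $\tU^{\imath,\gr}$ is $\phi(F_\beta)$ (this is the Remark after Theorem~\ref{thm:rkone} in rank $1$, propagated through the $\TT_{i_k}$ in the higher-rank case, which is exactly the content of Lemma~\ref{lem:leadingPBW}). Hence the symbol of $B^\ba F_\bullet^{\bc} E_\bullet^{\bd}$ in $\gr\,\tUi$ equals $\phi\bigl(F^\ba\bigr)\cdot F_\bullet^{\bc}\cdot E_\bullet^{\bd}$ under \eqref{eq:grade}, where I use that $F_\gamma,E_\gamma$ for $\gamma\in\cR^+_\bullet$ live in $\tU_\bullet$ and have $B$-degree $0$, so $\phi$ sends them to themselves. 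Now invoke the two classical PBW statements: by Proposition~\ref{prop:PBW-QG} (applied to the reduced expression $s_{k_1}\cdots s_{k_t}$ of $\bbw_0$) the monomials $F^\ba$, $\ba\in\N^{\ell(\bbw_0)}$, are $\Q(q)$-linearly independent in $\tU^-$; and again by Proposition~\ref{prop:PBW-QG} the monomials $F_\bullet^{\bc}E_\bullet^{\bd}$ form a $\tU^0_\bullet$-basis of $\tU_\bullet$, in particular the $F_\bullet^{\bc}\otimes E_\bullet^{\bd}$ are a basis of $\tU_\bullet^-\otimes\tU_\bullet^+$ over $\tU^0_\bullet$. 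Since $\tU^-\otimes\tU_\bullet^+\otimes\tU^{\imath0}$ is a free module over $\tU^{\imath0}$ with basis the tensor products of these PBW monomials (here one uses the triangular decomposition of $\tU^-$ and that $\tU^{\imath0}$ contains the relevant Cartan elements), the elements $\phi(F^\ba)F_\bullet^{\bc}E_\bullet^{\bd}$ form a $\tU^{\imath0}$-basis of $\tU^{\imath,\gr}$. Transporting along $\phi$, the symbols of our monomials form a $\tU^{\imath0}$-basis of $\gr\,\tUi$.

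For step (ii), a filtration argument finishes the proof. Spanning: given $u\in\tUi$ of filtration degree $d$, its symbol is a $\tU^{\imath0}$-combination of the symbols of the $B^\ba F_\bullet^{\bc} E_\bullet^{\bd}$ of $B$-degree $d$; subtracting the corresponding combination of monomials lowers the filtration degree, and one iterates (the filtration is exhaustive and bounded below, and only finitely many monomials of each fixed degree and fixed weight occur, since $\cR^+(\bbw_0)$ and $\cR^+_\bullet$ are finite). Linear independence: a nontrivial $\tU^{\imath0}$-relation among the monomials, restricted to its top filtration-degree part, would descend to a nontrivial relation among the symbols in $\gr\,\tUi$, contradicting step (i); here one must be a little careful that $\tU^{\imath0}$ is a subalgebra of the degree-$0$ part so that passing to $\gr$ does not disturb the coefficients.

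I expect the main obstacle to be the bookkeeping in step (i): one must verify that the symbol map is compatible with products in the three ``blocks'' $B^\ba$, $F_\bullet^{\bc}$, $E_\bullet^{\bd}$ so that $\mathrm{gr}(B^\ba F_\bullet^{\bc} E_\bullet^{\bd})$ really is the product of the individual symbols (not merely congruent to it modulo lower terms in a way that could collapse), and that the ordering conventions on $\cR^+(\bbw_0)$ induced by the reduced word $s_{k_1}\cdots s_{k_t}$ match those used in Proposition~\ref{prop:PBW-QG}. Since $F_\gamma,E_\gamma$ for $\gamma\in\cR^+_\bullet$ commute past the $B$-filtration trivially (they have $B$-degree $0$ and lie in $\tU_\bullet$, which is filtration-degree $0$), this compatibility reduces to the pure $\tU^-$ statement already guaranteed by Corollary~\ref{cor:Ba}, plus the classical PBW basis of $\tU_\bullet$; so the obstacle is really one of careful exposition rather than a new idea.
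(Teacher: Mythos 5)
Your strategy---read off the symbols of the monomials in the associated graded algebra via \eqref{eq:grade}, show they form a $\tU^{\imath0}$-basis there using classical PBW, and lift by a filtered-to-graded argument---is precisely the approach the paper takes, and the appeal to Corollary~\ref{cor:Ba}/Lemma~\ref{lem:leadingPBW} is the same key input. So the skeleton is the same. However, your step (i) glosses over the one genuinely nontrivial bookkeeping point, which you flag but do not resolve: it is not automatic that the products $F^\ba F_\bullet^{\bc}$ (with $\ba\in\N^{\ell(\bbw_0)}$, $\bc\in\N^{r}$) form a $\Q(q)$-basis of $\tU^-$. Linear independence of the $F^\ba$ plus a PBW basis of $\tU_\bullet^-$ does not by itself give freeness of the product, and the tensor factorisation $\tU^-\otimes\tU_\bullet^+\otimes\tU^{\imath0}$ in \eqref{eq:grade} has $\tU^-$ as a single block, so you really need a basis of $\tU^-$ indexed exactly by such pairs $(\ba,\bc)$.

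The paper supplies this missing step: since $w_0=\bbw_0 w_\bullet$ with $\ell(w_0)=\ell(\bbw_0)+\ell(w_\bullet)$, the composite reduced word $s_{k_1}\cdots s_{k_t}s_{j_1}\cdots s_{j_r}$ gives a PBW basis $F^\ba\,\widetilde{T}_{\bbw_0}(F_\bullet^{\bc})$ of $\tU^-$ by Proposition~\ref{prop:PBW-QG}. Then one uses that $\bbw_0$ maps $\alpha_j$ to $\alpha_{\tau\tau_0(j)}$ for $j\in\I_\bullet$ (with $\tau_0$ the diagram involution of $w_0$), so by Lemma~\ref{lem:Tiwi} the automorphism $\widetilde{T}_{\bbw_0}$ acts on $\tU_\bullet^-$ as the relabeling $F_j\mapsto F_{\tau\tau_0(j)}$; consequently $\{\widetilde{T}_{\bbw_0}(F_\bullet^{\bc})\}$ is just $\{F_\bullet^{\bc'}\}$ up to reindexing, and $\{F^\ba F_\bullet^{\bc}\}$ is a PBW basis of $\tU^-$ for the reduced word $s_{k_1}\cdots s_{k_t}s_{j_1'}\cdots s_{j_r'}$, $j_s'=\tau\tau_0(j_s)$. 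With this in hand your steps (i) and (ii) go through exactly as you describe. I'd recommend inserting this justification (or at least citing Lemma~\ref{lem:positive-roots-rankone} and the diagram-automorphism fact) where you currently write ``Since $\tU^-\otimes\tU_\bullet^+\otimes\tU^{\imath0}$ is a free module over $\tU^{\imath0}$ with basis the tensor products of these PBW monomials.''
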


\begin{proof}

Using Lemma \ref{lem:leadingPBW}, we know that the leading term of $B^{\ba}$ is $F^\ba$ for any $\ba\in\N^{\ell(\bbw_0)}$. 

Recall the reduced expression $\bbw_0=s_{k_1}s_{k_2}\cdots s_{k_{t}}$ in $W$, obtained using the reduced expressions for $\bs_i$. Since $\ell(w_0)=\ell(\bbw_0)+\ell(\bw)$, we obtain a reduced expression of $w_0$ by composing $\bbw_0=s_{k_1}s_{k_2}\cdots s_{k_{t}}$ and $\bw=s_{j_1}s_{j_2}\cdots s_{j_r}$. Let $\tau_0$ be the diagram involution associated to $w_0$. By Lemma~\ref{lem:positive-roots-rankone} and its proof, elements
\begin{align*}
F^\ba \tTD_{\bbw_0}(F_\bullet^{\bc})
=F^\ba \tau\tau_0 (F_\bullet^{\bc})\quad (\ba\in \N^{\ell(\bbw_0)},\bc\in\N^{r}) 
\end{align*}
form a PBW basis of $\tU^-$ with respect to the above reduced expression of $w_0$. It follows that $F^\ba F_\bullet^{\bc}$ ($\ba\in \N^{\ell(\bbw_0)},\bc\in\N^{r}$)  form a PBW basis of $\tU^-\cong \U^-$ with respect to the reduced expression $w_0=s_{k_1}s_{k_2}\cdots s_{k_{t}}s_{j'_1}s_{j'_2}\cdots s_{j'_r}$ where $j_s'=\tau\tau_0(j_s)$. 

Recall the isomorphism $\phi:\tU^- \otimes \tU_{\bullet}^+ \otimes \U^{\imath,0}\rightarrow \U^{\imath,\gr}$ defined in 
\eqref{eq:grade}. Since $E_\bullet^{\bd}$ form a PBW basis for $\tU^+_\bullet$, elements $F^\ba F^{\bc}_\bullet \otimes E_\bullet^{\bd} \otimes 1$ ($\ba\in \N^{\ell(\bbw_0)},\bc,\bd\in\N^{r}$) form a right $\tU^{\imath0}$-basis for $\tU^- \otimes \tU_{\bullet}^+\otimes \tU^{\imath0}$. By definition, $\phi(F^\ba F^{\bc}_\bullet\otimes E_\bullet^{\bd} \otimes 1)=B^\ba F^{\bc}_\bullet E_\bullet^{\bd}$. Therefore, the monomials 
$B^\ba F_\bullet^{\bc} E_\bullet^{\bd}$ ($\ba\in \N^{\ell(\bbw_0)},\bc,\bd\in\N^{r}$) form a $\tU^{\imath0}$-basis for $ \tU^{\imath,\gr}$, and hence they form a $\tU^{\imath0}$-basis of $\tUi$. 

\end{proof}

For any parameters $\bvs\in(\Q(q)^\times)^{\I_\circ}$, there is a canonical homomorphism $\pi_\bvs^\imath:\tUi\rightarrow \Ui_\bvs$ defined in \eqref{def:pibvs}. Using this morphism $\pi_\bvs^\imath$, we obtain the root vectors $B_\beta$ in $\Ui_\bvs$ for any $\beta\in\cR^+(\bbw_0)$, and then the monomials 
$B^\ba F_\bullet^{\bc} E_\bullet^{\bd}$ ($\ba\in \N^{\ell(\bbw_0)},\bc,\bd\in\N^{r}$) in $\Ui_\bvs$.
We have the following corollary. 
\begin{corollary}
\label{cor:PBW-Ui}
Let $\bbw_0=\bs_{i_1}\bs_{i_2}\cdots\bs_{i_\ell}$ be a reduced expression for the longest element $\bbw_0$ in $W^\circ$, and $w_\bullet=s_{j_1}s_{j_2}\cdots s_{j_r}$ a reduced expression of the longest element $w_\bullet$ in $W_\bullet$. Then the monomials 
$B^\ba F_\bullet^{\bc} E_\bullet^{\bd}$ ($\ba\in \N^{\ell(\bbw_0)},\bc,\bd\in\N^{r}$) 
form a $\U^{\imath0}$-basis of $\Ui_\bvs$.
\end{corollary}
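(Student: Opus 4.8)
The plan is to obtain the corollary from Theorem~\ref{thm:iPBW} by pushing the $\tU^{\imath0}$-basis of $\tUi$ forward along the central reduction $\pi^\imath_\bvs\colon\tUi\to\Ui_\bvs$ of \eqref{def:pibvs}, where $\U^{\imath0}:=\pi^\imath_\bvs(\tU^{\imath0})$ is the Cartan part of $\Ui_\bvs$. The first ingredient I would record is the precise shape of $\ker\pi^\imath_\bvs$: by the construction of $\pi^\imath_\bvs$ in \cite{LW19a,WZ23}, $\pi^\imath_\bvs$ is surjective, it restricts to a surjection $\tU^{\imath0}\twoheadrightarrow\U^{\imath0}$, and $\ker\pi^\imath_\bvs$ is the two-sided ideal of $\tUi$ generated by finitely many elements $z_t-c_t$, where each $z_t$ is a Laurent monomial in $k_i^{\pm1},K_j^{\pm1},(K_j')^{\pm1}$ lying in $\tU^{\imath0}$ and central in $\tUi$, and $c_t\in\Q(q)^\times$ (for instance $k_i-\vs_i$ for $\tau i=i$, $k_ik_{\tau i}-\vs_i\vs_{\tau i}$ for $\tau i\neq i$, and $K_jK_j'-1$ for $j\in\I_\bullet$); moreover, a linear change of variables among the Laurent monomials shows that $\ker(\tU^{\imath0}\twoheadrightarrow\U^{\imath0})$ is exactly the ideal of $\tU^{\imath0}$ generated by these same $z_t-c_t$. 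Finally, since $\pi^\imath_\bvs$ restricts to the standard projection $\tU_\bullet\to\U_\bullet$ and the $\Ui_\bvs$-root vectors $B_\beta$ are by definition the images $\pi^\imath_\bvs(B_\beta)$, we have $\pi^\imath_\bvs(B^\ba F_\bullet^\bc E_\bullet^\bd)=B^\ba F_\bullet^\bc E_\bullet^\bd$ for all $\ba,\bc,\bd$.

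Spanning is then immediate: applying the surjection $\pi^\imath_\bvs$ to the $\tU^{\imath0}$-spanning set $\{B^\ba F_\bullet^\bc E_\bullet^\bd\}$ of $\tUi$ from Theorem~\ref{thm:iPBW} and using $\pi^\imath_\bvs(\tU^{\imath0})=\U^{\imath0}$ shows that $\{B^\ba F_\bullet^\bc E_\bullet^\bd\}$ spans $\Ui_\bvs$ over $\U^{\imath0}$.

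For linear independence, suppose $\sum_{\ba,\bc,\bd}x_{\ba\bc\bd}\,B^\ba F_\bullet^\bc E_\bullet^\bd=0$ in $\Ui_\bvs$ with $x_{\ba\bc\bd}\in\U^{\imath0}$. I would lift each $x_{\ba\bc\bd}$ to some $\tilde x_{\ba\bc\bd}\in\tU^{\imath0}$ along $\tU^{\imath0}\twoheadrightarrow\U^{\imath0}$ and set $u:=\sum\tilde x_{\ba\bc\bd}\,B^\ba F_\bullet^\bc E_\bullet^\bd\in\ker\pi^\imath_\bvs$; by centrality of the $z_t-c_t$ we may write $u=\sum_t (z_t-c_t)\,u_t$ with $u_t\in\tUi$. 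Expanding each $u_t$ in the $\tU^{\imath0}$-basis of Theorem~\ref{thm:iPBW} as $u_t=\sum h^{(t)}_{\ba\bc\bd}\,B^\ba F_\bullet^\bc E_\bullet^\bd$ with $h^{(t)}_{\ba\bc\bd}\in\tU^{\imath0}$, and using that $z_t-c_t\in\tU^{\imath0}$ is central, I collect coefficients to get $u=\sum_{\ba,\bc,\bd}\big(\sum_t(z_t-c_t)h^{(t)}_{\ba\bc\bd}\big)B^\ba F_\bullet^\bc E_\bullet^\bd$. Comparing with $u=\sum\tilde x_{\ba\bc\bd}\,B^\ba F_\bullet^\bc E_\bullet^\bd$ and invoking the uniqueness of the expansion in Theorem~\ref{thm:iPBW}, I conclude $\tilde x_{\ba\bc\bd}=\sum_t(z_t-c_t)h^{(t)}_{\ba\bc\bd}$, which lies in $\ker(\tU^{\imath0}\twoheadrightarrow\U^{\imath0})$; hence $x_{\ba\bc\bd}=\pi^\imath_\bvs(\tilde x_{\ba\bc\bd})=0$, and linear independence follows.

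There is no genuine obstacle here: the argument is a formal consequence of Theorem~\ref{thm:iPBW} once the central-reduction description of $\ker\pi^\imath_\bvs$ (together with its effect on Cartan parts) is in hand, and that description is already established in \cite{LW19a,WZ23}. If one prefers to bypass the kernel bookkeeping, an equally valid route is to rerun the proof of Theorem~\ref{thm:iPBW} with $\U$ in place of $\tU$: the filtration of \cite{KY21} and the isomorphism \eqref{eq:grade} have the evident analogues for $\Ui_\bvs$, and the leading-term statement (the analogue of Corollary~\ref{cor:Ba}) transports along the filtered algebra map $\pi^\imath_\bvs$, which induces on associated graded algebras the identity on the $\tU^-\otimes\tU_\bullet^+$-factor and the central reduction on the $\tU^{\imath0}$-factor.
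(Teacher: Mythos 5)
Your proposal is correct and fills in, with full rigor, exactly the argument the paper leaves implicit: the paper gives no written proof of Corollary~\ref{cor:PBW-Ui} beyond the preceding remark that root vectors in $\Ui_\bvs$ are obtained by pushing forward along $\pi^\imath_\bvs$, so the intended proof is precisely a central reduction of Theorem~\ref{thm:iPBW}. Your bookkeeping of $\ker\pi^\imath_\bvs$ (generated by the central Cartan elements $k_i-\vs_i$ for $\tau i=i$, $k_ik_{\tau i}-\vs_i\vs_{\tau i}$ for $\tau i\neq i$, and $K_jK_j'-1$ for $j\in\I_\bullet$) and your use of centrality to conclude linear independence is correct, as is the alternative route of rerunning the filtration argument of \eqref{eq:grade} directly for $\Ui_\bvs$.
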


\section{Modified $\imath$quantum groups}
\label{sec:Uidot}
In this section, we review the modified quantum groups and the modified $\imath$quantum groups. We construct $\imath$divided powers for root vectors of modified $\imath$quantum groups in the rank 1 case.

\subsection{The algebra $\dot{\U}$}

Let $(Y,X,\langle \cdot,\cdot\rangle,...)$ be a root datum of type $(\I,\cdot)$; see \S\ref{sec:prelim}. 

For $\lambda',\lambda''\in X$, define
\begin{align}
    {}_{\lambda'}\U_{\lambda''}:=\U/\big(\sum_{\mu\in Y}(K_\mu-q^{\langle\mu,\lambda'\rangle})\U+\sum_{\mu\in Y}\U(K_\mu-q^{\langle \mu,\lambda''\rangle})\big).
\end{align}
Lusztig's modified quantum group is 
\begin{align}
    \dot{\U}=\bigoplus_{\lambda',\lambda''\in X}({}_{\lambda'}\U_{\lambda''}),
\end{align}
which is an associative $\Q(q)$-algebra inherited from that of $\U$.
Let $\pi_{\lambda',\lambda''}:\U\rightarrow{}_{\lambda'}\U_{\lambda''} $ be the canonical projection, and $\mathbf{1}_\lambda =\pi_{\lambda,\lambda}(1)$ for any $\lambda\in X$. Then $\mathbf{1}_\lambda$ ($\lambda\in X$) are the orthogonal idempotents, and 
\begin{align}
{}_{\lambda'}\U_{\lambda''}=\mathbf{1}_{\lambda'}\dot{\U}\mathbf{1}_{\lambda''}.
\end{align}

Recall $\A=\Z[q,q^{-1}]$. 
The $\A$-form $\dot{\U}_\A$ of $\dot{\U}$ is the $\A$-subalgebra of $\dot{\U}$ generated by the elements $E_i^{(n)}\mathbf{1}_{\lambda}$, $F_i^{(n)}\mathbf{1}_\lambda$ for $i\in\I$, $n\geq0$, and $\lambda\in X$; see \cite[Lemma 23.2.2~(c)]{Lus93}.

\begin{proposition}[\text{cf. \cite[Lemma 23.2.2]{Lus93}}]
\label{prop:PBWUdot}
Fix a reduced expression ${w_0}=s_{i_1} \cdots s_{i_l}$ for the longest element $w_0\in W$. Set
\begin{align*}
F_{\beta_k}^{(a)}=\widetilde{T}_{i_1}\widetilde{T}_{i_2}\cdots \widetilde{T}_{i_{k-1}}(F_{i_k}^{(a)}),
\\
E_{\beta_k}^{(a)}=\widetilde{T}_{i_1}\widetilde{T}_{i_2}\cdots \widetilde{T}_{i_{k-1}}(E_{i_k}^{(a)}),
\end{align*}
for $1\leq k\leq l,a\in \N$. Then the monomials $F_{\beta_1}^{(a_1)} F_{\beta_2}^{(a_2)}\cdots F_{\beta_l}^{(a_l)} \mathbf{1}_\lambda E_{\beta_1}^{(b_1)} E_{\beta_2}^{(b_2)}\cdots E_{\beta_l}^{(b_l)}$, $a_1,\ldots, a_l,b_1,b_2,\dots,b_l\in \N,\lambda\in X$ form an $\A$-basis of $\dot{\U}_\A$.
\end{proposition}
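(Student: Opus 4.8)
The plan is to follow Lusztig's argument for \cite[Lemma 23.2.2]{Lus93}, adapted to the braid symmetries $\widetilde{T}_i$ on $\tU$: since $\widetilde{T}_i$ lifts Lusztig's $T_i$ compatibly with $\pi:\tU\to\U$, the elements $F_{\beta_k}^{(a)}$, $E_{\beta_k}^{(a)}$ agree with Lusztig's divided-power PBW elements after the identification $\tU^\pm\cong\U^\pm$, and by Proposition~\ref{prop:PBW-QG} (and its $E$-analogue) the monomials $F^{(\ba)}:=F_{\beta_1}^{(a_1)}\cdots F_{\beta_l}^{(a_l)}$ and $E^{(\bd)}:=E_{\beta_1}^{(b_1)}\cdots E_{\beta_l}^{(b_l)}$ form $\A$-bases of $\tU_\A^-$ and $\tU_\A^+$ respectively, and $\Q(q)$-bases of $\tU^-$, $\tU^+$.

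First I would check that the monomials $F^{(\ba)}\mathbf{1}_\lambda E^{(\bd)}$ span $\dot{\U}$ over $\Q(q)$ and are linearly independent. Fix $\lambda',\lambda''\in X$. From the triangular decomposition $\U\cong\U^-\otimes\U^0\otimes\U^+$ and the fact that $\U^0$ acts on $\mathbf{1}_{\lambda''}$ through the character $K_\mu\mapsto q^{\langle\mu,\lambda''\rangle}$, the space ${}_{\lambda'}\U_{\lambda''}$ is spanned by the images of $F^{\ba}E^{\bd}$, where $F^{\ba}$, $E^{\bd}$ range over the PBW monomials of $\U^-$, $\U^+$; replacing ordinary powers by divided powers (an upper-triangular $\A$-base change) and summing over $\lambda',\lambda''\in X$ shows the $F^{(\ba)}\mathbf{1}_\lambda E^{(\bd)}$ form a $\Q(q)$-basis of $\dot{\U}$. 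Next I would verify integrality: since $\dot{\U}_\A$ is generated as an algebra by the elements $E_i^{(n)}\mathbf{1}_\lambda$ and $F_i^{(n)}\mathbf{1}_\lambda$, any product of $F$-divided-powers times an idempotent lies in $\dot{\U}_\A$ (inserting the idempotents forced by the weights), so $\tU_\A^-\mathbf{1}_\lambda\subseteq\dot{\U}_\A$ and $\mathbf{1}_\lambda\tU_\A^+\subseteq\dot{\U}_\A$; hence each $F^{(\ba)}\mathbf{1}_\lambda E^{(\bd)}=(F^{(\ba)}\mathbf{1}_\lambda)(\mathbf{1}_\lambda E^{(\bd)})$ lies in $\dot{\U}_\A$. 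Let $M\subseteq\dot{\U}_\A$ be the $\A$-span of these monomials.

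The main obstacle is the reverse inclusion $\dot{\U}_\A\subseteq M$. The strategy is to rewrite an arbitrary product of the generators $E_i^{(n)}\mathbf{1}_\lambda$, $F_i^{(n)}\mathbf{1}_\lambda$ (which vanishes unless weights match) as an $\A$-combination of the $F^{(\ba)}\mathbf{1}_\mu E^{(\bd)}$. The crucial input is the divided-power commutation formula in $\dot{\U}$, namely $E_i^{(n)}\mathbf{1}_\lambda F_i^{(m)}=\sum_{t\ge0}F_i^{(m-t)}\,\gamma_{t}\,\mathbf{1}_{\lambda'}E_i^{(n-t)}$, where $\gamma_{t}$ is a Gaussian binomial coefficient in $q_i$ evaluated at the integer $n-m+\langle h_i,\lambda\rangle$, hence $\gamma_{t}\in\A$, together with $[E_i^{(n)},F_j^{(m)}]=0$ for $i\ne j$. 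Iterating this pushes all $F$-factors to the left and all $E$-factors to the right with coefficients in $\A$, leaving a product of $F_i^{(n)}$'s (an element of $\tU_\A^-$) times $\mathbf{1}_\mu$ times a product of $E_i^{(n)}$'s (an element of $\tU_\A^+$); Proposition~\ref{prop:PBW-QG} then expands these in the PBW monomials $F^{(\ba)}$, $E^{(\bd)}$ with $\A$-coefficients, so the product lies in $M$. The delicate point in executing this is the bookkeeping of weight shifts, so that the idempotents $\mathbf{1}_\mu$ line up consistently and the binomials are genuinely evaluated at integers; once this is tracked one obtains $M=\dot{\U}_\A$, and combined with the first step the monomials form an $\A$-basis.
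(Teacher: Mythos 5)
Your argument is correct and coincides with the paper's approach: both reduce to Proposition~\ref{prop:PBW-QG} together with the standard structure theory of $\dot{\U}_\A$. The paper's proof is a one-liner because it simply cites Lusztig's Lemma~23.2.2, which already asserts that any homogeneous $\A$-basis of $\U^-_\A$ yields, via minus-part, idempotent, plus-part, an $\A$-basis of $\dot{\U}_\A$; you have unfolded that citation and reproved the lemma by hand, using triangular decomposition for the $\Q(q)$-span, integrality of the divided-power monomials, and the $E_i^{(n)}F_i^{(m)}$ straightening formula --- all of which are exactly the ingredients inside Lusztig's proof.
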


\begin{proof}
This proposition follows from Proposition \ref{prop:PBW-QG} by using \cite[Lemma 23.2.2]{Lus93}.
\end{proof}

Let $M(\lambda)$ be the Verma module of $\U$ with highest weight $\lambda\in X$ and with a highest weight vector $\eta_\lambda$. Denote by $L(\lambda)$ the unique simple quotient $\U$-module of $M(\lambda)$. For $\lambda\in X^+$, set $_\A L(\lambda)={}_\A \U^-\eta_\lambda$ the $\A$-submodule of $L(\lambda)$.

\subsection{The algebra $\dot{\U}^\imath$}
Recall the $\imath$quantum group $\Ui=\Ui_\bvs$ over the field $\bF$ from section~\ref{sec:QSP} and $X_\imath,Y^\imath$ from \eqref{def:XiYi}. 
Following \cite{BW18b,BW21}, for any $\lambda',\lambda''\in X_\imath$, we define
\begin{align}
\label{def:Uilambda}
    {}_{\lambda'}\Ui_{\lambda''}
    :=\Ui/\big(\sum_{\mu\in Y^\imath}(K_\mu-q^{\langle\mu,\lambda'\rangle})\Ui+\sum_{\mu\in Y^\imath}\Ui(K_\mu-q^{\langle \mu,\lambda''\rangle})\big).
\end{align}
Define
\begin{align}
    \dot{\U}^\imath=\bigoplus_{\lambda',\lambda''\in X_\imath}({}_{\lambda'}\Ui_{\lambda''}),
\end{align}
which is an associative $\bF$-algebra. Denote  
$\pi_{\lambda',\lambda''}:\Ui\rightarrow{}_{\lambda'}\Ui_{\lambda''} $ the canonical projection, and $\mathbf{1}_\lambda =\pi_{\lambda,\lambda}(1)$ for any $\lambda',\lambda'',\lambda\in X_\imath$. Then $\mathbf{1}_\lambda$ ($\lambda\in X_\imath$) are the orthogonal idempotents, and
\begin{align}
{}_{\lambda'}\Ui_{\lambda''}=\mathbf{1}_{\lambda'}\dot{\U}^\imath\mathbf{1}_{\lambda''}.
\end{align}

\begin{proposition}
\label{prop:PBW-Uifiled}
    Let $\bbw_0=\bs_{i_1}\bs_{i_2}\cdots\bs_{i_\ell}$ be a reduced expression for the longest element $\bbw_0$ in $W^\circ$, and $w_\bullet=s_{j_1}s_{j_2}\cdots s_{j_r}$ a reduced expression of the longest element $w_\bullet$ in $W_\bullet$. Then the monomials 
$B^\ba F_\bullet^{\bc} E_\bullet^{\bd}\mathbf{1}_\zeta$ ($\ba\in \N^{\ell(\bbw_0)},\bc,\bd\in\N^{r}$, $\zeta\in X_\imath$) 
form an $\bF$-basis of $\dot{\U}^{\imath}$.
\end{proposition}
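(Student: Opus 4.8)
The plan is to deduce this from the PBW basis of $\Ui$ over its Cartan subalgebra established in Corollary~\ref{cor:PBW-Ui}, by the same idempotent-completion bookkeeping that passes from Proposition~\ref{prop:PBW-QG} to Proposition~\ref{prop:PBWUdot} via \cite[Lemma~23.2.2]{Lus93}. By Corollary~\ref{cor:PBW-Ui} (read as in the proof of Theorem~\ref{thm:iPBW}), $\Ui=\Ui_\bvs$ is a free right $\U^{\imath0}$-module with basis $\{B^\ba F_\bullet^\bc E_\bullet^\bd\}$, and $\U^{\imath0}=\bigoplus_{\mu\in Y^\imath}\bF K_\mu$ is the Laurent polynomial ring on $Y^\imath$. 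Because the pairing $\langle\cdot,\cdot\rangle\colon Y^\imath\times X\to\Z$ annihilates $\check{X}$ --- indeed $\langle\mu,\lambda-\theta\lambda\rangle=\langle\mu,\lambda\rangle-\langle\theta\mu,\lambda\rangle=0$ for $\mu\in Y^\imath$ --- each $\zeta\in X_\imath$ determines a well-defined algebra homomorphism $\chi_\zeta\colon\U^{\imath0}\to\bF$, $K_\mu\mapsto q^{\langle\mu,\zeta\rangle}$, with kernel the maximal ideal $\mathfrak{m}_\zeta=\sum_{\mu\in Y^\imath}\U^{\imath0}\bigl(K_\mu-q^{\langle\mu,\zeta\rangle}\bigr)$ and $\U^{\imath0}/\mathfrak{m}_\zeta\cong\bF$.

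The core step is to identify $\dot{\U}^\imath\mathbf{1}_\zeta=\bigoplus_{\lambda'\in X_\imath}{}_{\lambda'}\Ui_\zeta$ with the specialization $\Ui\big/\sum_{\mu\in Y^\imath}\Ui(K_\mu-q^{\langle\mu,\zeta\rangle})$. For this I would first note that $\Ui$ is $X_\imath$-graded under the adjoint action of $\U^{\imath0}$: the generators $E_j,F_j\ (j\in\I_\bullet)$ and $K_\mu$ are $\imath$-weight vectors, and so is $B_i=F_i+\vs_i T_{w_\bullet}(E_{\tau i})K_i^{-1}$, since its two summands have $\U$-weights $-\alpha_i$ and $-\theta\alpha_i$, which pair identically with $Y^\imath$. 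Consequently both $\sum_\mu(K_\mu-q^{\langle\mu,\zeta\rangle})\Ui$ and $\sum_\mu\Ui(K_\mu-q^{\langle\mu,\zeta\rangle})$ are graded subspaces of $\Ui$, and a weight-space computation shows that on the $\imath$-weight-$\nu$ component the left relations attached to $\lambda'$ are consequences of the right relations attached to $\zeta$ when $\lambda'=\zeta+\nu$, and annihilate that component otherwise. Summing over $\lambda'\in X_\imath$ yields the desired isomorphism of $\bF$-vector spaces
\[
\Ui\Big/\sum_{\mu\in Y^\imath}\Ui\bigl(K_\mu-q^{\langle\mu,\zeta\rangle}\bigr)\ \xrightarrow{\ \sim\ }\ \dot{\U}^\imath\mathbf{1}_\zeta,\qquad \bar{x}\longmapsto x\mathbf{1}_\zeta,
\]
exactly parallel to \cite[Lemma~23.2.2]{Lus93}.

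It remains to read off the basis. Using the free right $\U^{\imath0}$-module structure of Corollary~\ref{cor:PBW-Ui}, the source of the isomorphism above equals $\Ui/\Ui\mathfrak{m}_\zeta\cong\Ui\otimes_{\U^{\imath0}}\bigl(\U^{\imath0}/\mathfrak{m}_\zeta\bigr)$, which, being the base change along $\chi_\zeta$ of a free module, is a free $\bF$-module with basis the images of $B^\ba F_\bullet^\bc E_\bullet^\bd$; under the isomorphism these images are precisely the $B^\ba F_\bullet^\bc E_\bullet^\bd\mathbf{1}_\zeta$. Hence $\{B^\ba F_\bullet^\bc E_\bullet^\bd\mathbf{1}_\zeta\}_{\ba,\bc,\bd}$ is an $\bF$-basis of $\dot{\U}^\imath\mathbf{1}_\zeta$, and since $\dot{\U}^\imath=\bigoplus_{\zeta\in X_\imath}\dot{\U}^\imath\mathbf{1}_\zeta$, letting $\zeta$ run over $X_\imath$ gives the asserted basis of $\dot{\U}^\imath$. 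Given Corollary~\ref{cor:PBW-Ui}, the only step with any content is the identification in the previous paragraph --- that is, checking the $X_\imath$-grading of $\Ui$ and the redundancy of the left relations on each weight space; should one prefer, this may instead be quoted from the structure theory of $\dot{\U}^\imath$ in \cite{BW18b}. Everything else is formal module theory.
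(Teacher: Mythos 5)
Your proof is correct and takes essentially the same route as the paper, whose own proof is the one-liner ``it follows from the definition of $\dot{\U}^\imath$ and Corollary~\ref{cor:PBW-Ui}.'' You have simply unpacked that: the $X_\imath$-grading of $\Ui$, the identification $\dot{\U}^\imath\mathbf{1}_\zeta\cong\Ui/\Ui\mathfrak{m}_\zeta$, and the base change of the free right $\U^{\imath0}$-module from Corollary~\ref{cor:PBW-Ui} along $\chi_\zeta$ are exactly the bookkeeping the authors leave implicit (and, as you note, one can also quote the structure theory from \cite{BW18b} for the middle step).
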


\begin{proof}
    It follows from the definition of $\dot{\U}^\imath$ and Corollary \ref{cor:PBW-Ui}. 
\end{proof}

The main goal of this and next section is to construct a PBW type basis for suitable integral form of $\dot{\U}^\imath$. In our construction of the integral PBW type basis, an important role is played by the relative braid group symmetries, whose construction involves the distinguished parameter.
Since the distinguished parameters $\bvs_\diamond$ in Table \ref{table:localSatake} involve $q^{1/2}$, we need to enlarge the ring $\A$. Let \begin{align}\bar{\A}=\Z[v,v^{-1}]\text{ where }v=q^{1/2}.
\end{align}
Note that $\A\subseteq \bar{\A}$ as a subring.
The $\bar{\A}$-form $\dot{\U}_{\bar{\A}}$ of $\dot{\U}$ is the $\bar{\A}$-subalgebra of $\dot{\U}$ generated by the elements $E_i^{(n)}\mathbf{1}_{\lambda}$, $F_i^{(n)}\mathbf{1}_\lambda$ for $i\in\I$, $n\geq0$, $\lambda\in X$; see \cite[Lemma 23.2.2~(c)]{Lus93}. 

\begin{definition}[\text{cf. \cite[Definition 3.19]{BW18b}}]
\label{def:integral-Uidot}
The $\bar\A$-form $\dot{\U}^\imath_{\bar\A}$ is the set of elements $u\in\dot{\U}^\imath$, such that $u\cdot m\in \dot{\U}_{\bar\A}$ for all $m\in \dot{\U}_{\bar\A}$. Then $\dot{\U}^\imath_{\bar\A}$ is an $\bar\A$-subalgebra of $\dot{\U}^\imath$ which contains all the idempotents $\mathbf{1}_\zeta$ ($\zeta\in X_\imath$), and $\dot{\U}^\imath_{\bar\A}=\bigoplus_{\zeta\in X_\imath} \dot{\U}^\imath_{\bar\A}\mathbf{1}_\zeta$.
\end{definition}

\begin{lemma}[\text{cf. \cite[Lemma 3.20]{BW18b}}]
\label{lem:UidotA}
Let $u\in \dot{\U}^\imath$. Then $u\in \dot{\U}_{\bar\A}$ if and only if $u\cdot \mathbf{1}_\lambda\in \dot{\U}_{\bar\A}$ for each $\lambda\in X$.
\end{lemma}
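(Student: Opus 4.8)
The plan is to establish the two implications of the claimed characterization of membership in $\dot{\U}^\imath_{\bar\A}$ (recall from Definition~\ref{def:integral-Uidot} that $u\in\dot{\U}^\imath_{\bar\A}$ means $u\cdot m\in\dot{\U}_{\bar\A}$ for \emph{all} $m\in\dot{\U}_{\bar\A}$). The forward implication is immediate: each idempotent $\mathbf{1}_\lambda$ ($\lambda\in X$) lies in $\dot{\U}_{\bar\A}$, so if $u\in\dot{\U}^\imath_{\bar\A}$ then $u\cdot\mathbf{1}_\lambda\in\dot{\U}_{\bar\A}$. Thus the content is the converse, where one must upgrade the test set from $\{\mathbf{1}_\lambda\mid\lambda\in X\}$ to all of $\dot{\U}_{\bar\A}$.

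The key structural inputs I would invoke are, first, that $\dot{\U}$ is a $(\dot{\U}^\imath,\U)$-bimodule (cf. \cite{BW18b}), the left $\dot{\U}^\imath$-action being the restriction along $\Ui\subseteq\U$ of the left $\U$-action on $\dot{\U}$; in particular this left action commutes with right multiplication inside $\dot{\U}$, so $u\cdot(xy)=(u\cdot x)\,y$ for $u\in\dot{\U}^\imath$ and $x,y\in\dot{\U}$. Second, $\dot{\U}_{\bar\A}$ is a subalgebra of $\dot{\U}$ containing all $\mathbf{1}_\lambda$ and is compatible with the weight bigrading $\dot{\U}=\bigoplus_{\lambda',\lambda''}{}_{\lambda'}\U_{\lambda''}$: indeed it is generated by the bihomogeneous elements $E_i^{(n)}\mathbf{1}_\lambda,F_i^{(n)}\mathbf{1}_\lambda$ (equivalently, use the PBW $\bar\A$-basis of Proposition~\ref{prop:PBWUdot}, whose elements are bihomogeneous). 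Hence $\dot{\U}_{\bar\A}=\bigoplus_{\lambda\in X}\mathbf{1}_\lambda\dot{\U}_{\bar\A}$, and each summand $\mathbf{1}_\lambda\dot{\U}_{\bar\A}=\mathbf{1}_\lambda\cdot\dot{\U}_{\bar\A}$ is contained in $\dot{\U}_{\bar\A}$.

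With these in hand the argument is short: assume $u\cdot\mathbf{1}_\lambda\in\dot{\U}_{\bar\A}$ for all $\lambda\in X$, and take any $m\in\dot{\U}_{\bar\A}$. Writing $m=\sum_\lambda m_\lambda$ with $m_\lambda=\mathbf{1}_\lambda m\in\dot{\U}_{\bar\A}$ (a finite sum over left weights), and using $m_\lambda=\mathbf{1}_\lambda m_\lambda$ together with the bimodule compatibility, one gets
\[
u\cdot m=\sum_\lambda u\cdot m_\lambda=\sum_\lambda (u\cdot\mathbf{1}_\lambda)\,m_\lambda .
\]
Each term is the product in $\dot{\U}$ of $u\cdot\mathbf{1}_\lambda\in\dot{\U}_{\bar\A}$ with $m_\lambda\in\dot{\U}_{\bar\A}$, hence lies in $\dot{\U}_{\bar\A}$ since the latter is a subalgebra; therefore $u\cdot m\in\dot{\U}_{\bar\A}$, and $u\in\dot{\U}^\imath_{\bar\A}$.

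The only genuinely delicate point is the bimodule bookkeeping in the second paragraph: one must be sure that the left $\dot{\U}^\imath$-action on $\dot{\U}$ is simultaneously compatible with right multiplication in $\dot{\U}$ and with the $X$-weight decomposition on the left, so that $u\cdot\mathbf{1}_\lambda$ really records the $\lambda$-left-weight part of the action of $u$. This is where the surjection $X\twoheadrightarrow X_\imath$ and the fact that both summands of $B_i$ shift the left $X$-weight into the same $X_\imath$-coset (using $\alpha_i\equiv-\alpha_{\tau i}\bmod\check{X}$ for $i\in\I_\circ$) enter; these facts are already available from \cite{BW18b}, and everything else is formal.
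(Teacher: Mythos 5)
Your proof is correct and is essentially the same argument as the one the paper relies on (the paper gives no proof of its own, deferring to \cite[Lemma 3.20]{BW18b}, whose proof is exactly your decomposition $u\cdot m=\sum_\lambda (u\cdot\mathbf{1}_\lambda)\,m_\lambda$ combined with the facts that $\dot{\U}_{\bar\A}$ is a subalgebra compatible with the weight bigrading and that the left $\dot{\U}^\imath$-action commutes with right multiplication). You also correctly read the statement's ``$u\in\dot{\U}_{\bar\A}$'' as the intended ``$u\in\dot{\U}^\imath_{\bar\A}$''.
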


By \cite[\S 37.1.3]{Lus93}, $T_j(K_\mu)=K_{s_j\mu}$ for $\mu\in Y^\imath,j\in \bI$. Using the definition~\eqref{def:Uilambda}, $T_j$ induces an automorphism on $\dot{\U}^\imath$ such that $T_j(\mathbf{1}_\zeta)=\mathbf{1}_{s_j\zeta}$ for $\zeta\in X_\imath$; cf. \cite[41.1.1]{Lus93}. 

\begin{proposition}
\label{prop:Tblack}
$\dot{\U}^\imath_{\bar{\A}}$ is invariant under the action of $T_j$ for $j\in \bI$.
\end{proposition}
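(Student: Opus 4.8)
The plan is to reduce the statement to the classical stability of Lusztig's integral form $\dot{\U}_{\bar{\A}}$ under the braid symmetries, exploiting the ``dual'' characterisation of $\dot{\U}^\imath_{\bar{\A}}$ in Definition~\ref{def:integral-Uidot} together with Lemma~\ref{lem:UidotA}. I would invoke three facts recalled above: (i) for $j\in\bI$, $T_j$ is an algebra automorphism of $\U$ that restricts to $\Ui$ and satisfies $T_j(K_\mu)=K_{s_j\mu}$ ($\mu\in Y^\imath$), hence induces automorphisms of both $\dot{\U}$ and $\dot{\U}^\imath$ with $T_j(\mathbf{1}_\lambda)=\mathbf{1}_{s_j\lambda}$ ($\lambda\in X$) and $T_j(\mathbf{1}_\zeta)=\mathbf{1}_{s_j\zeta}$ ($\zeta\in X_\imath$); (ii) $\dot{\U}_{\bar{\A}}$ is stable under $T_j^{\pm1}$ --- this is classical, e.g.\ it follows from \cite[Ch.~41]{Lus93}, or directly from the fact that $T_j$ carries each generator $E_i^{(n)}\mathbf{1}_\lambda$, $F_i^{(n)}\mathbf{1}_\lambda$ of $\dot{\U}_{\bar{\A}}$ to an $\bar{\A}$-linear combination of such generators; and (iii) $T_j$ intertwines the action of $\dot{\U}^\imath$ on $\dot{\U}$ underlying Definition~\ref{def:integral-Uidot}, i.e.\ $T_j(x\cdot m)=T_j(x)\cdot T_j(m)$ for $x\in\dot{\U}^\imath$ and $m\in\dot{\U}$.

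Granting these, the argument I have in mind runs as follows. Let $u\in\dot{\U}^\imath_{\bar{\A}}$; by Lemma~\ref{lem:UidotA} it suffices to show $T_j(u)\cdot\mathbf{1}_\lambda\in\dot{\U}_{\bar{\A}}$ for every $\lambda\in X$. Since $s_j^2=1$, we have $\mathbf{1}_\lambda=T_j(\mathbf{1}_{s_j\lambda})$ in $\dot{\U}$, so by (iii),
\[
T_j(u)\cdot\mathbf{1}_\lambda \;=\; T_j(u)\cdot T_j(\mathbf{1}_{s_j\lambda}) \;=\; T_j\big(u\cdot\mathbf{1}_{s_j\lambda}\big).
\]
Now $\mathbf{1}_{s_j\lambda}\in\dot{\U}_{\bar{\A}}$, so $u\cdot\mathbf{1}_{s_j\lambda}\in\dot{\U}_{\bar{\A}}$ by Definition~\ref{def:integral-Uidot}, and then $T_j\big(u\cdot\mathbf{1}_{s_j\lambda}\big)\in\dot{\U}_{\bar{\A}}$ by (ii). Hence $T_j(u)\cdot\mathbf{1}_\lambda\in\dot{\U}_{\bar{\A}}$ for all $\lambda$, so $T_j(u)\in\dot{\U}^\imath_{\bar{\A}}$, i.e.\ $T_j(\dot{\U}^\imath_{\bar{\A}})\subseteq\dot{\U}^\imath_{\bar{\A}}$. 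Running the same reasoning with $T_j^{-1}$ in place of $T_j$ (it satisfies the same hypotheses, since $s_j=s_j^{-1}$) yields $\dot{\U}^\imath_{\bar{\A}}\subseteq T_j(\dot{\U}^\imath_{\bar{\A}})$, and therefore $T_j(\dot{\U}^\imath_{\bar{\A}})=\dot{\U}^\imath_{\bar{\A}}$.

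The only nontrivial point, and the step I would spend care on, is (iii). The relevant action is the one induced from $\Ui\subseteq\U$: for $b\in\Ui$, $y\in\U$, $\zeta\in X_\imath$, $\lambda\in X$ one has $(b\mathbf{1}_\zeta)\cdot(y\mathbf{1}_\lambda)=\delta_{\zeta,\ov{\lambda}}\,(by)\mathbf{1}_\lambda$, which descends to $\dot{\U}^\imath$ because $K_\mu$ ($\mu\in Y^\imath$) acts on $\dot{\U}\mathbf{1}_\lambda$ by the scalar $q^{\langle\mu,\lambda\rangle}$ and $\langle\mu,\lambda\rangle$ depends only on $\ov{\lambda}\in X_\imath=X/\check{X}$. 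Since $T_j$ is an algebra automorphism of $\U$ restricting to $\Ui$, for $x=b\mathbf{1}_\zeta$ and $m=y\mathbf{1}_\lambda$ the left side of (iii) equals $\delta_{\zeta,\ov{\lambda}}\,(T_j(b)T_j(y))\mathbf{1}_{s_j\lambda}$ while the right side equals $\delta_{s_j\zeta,\ov{s_j\lambda}}\,(T_j(b)T_j(y))\mathbf{1}_{s_j\lambda}$; these agree because $s_j\in W_\bullet$ commutes with $\theta$, hence descends to an automorphism of $X_\imath$ with $\ov{s_j\lambda}=s_j\ov{\lambda}$, so $\zeta=\ov{\lambda}\iff s_j\zeta=\ov{s_j\lambda}$. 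This is pure bookkeeping with the projection $X\to X_\imath$. (If one prefers, (iii) can be sidestepped by lifting $T_j$ to compatible automorphisms of the integrable modules $L(\lambda)$ that characterise $\dot{\U}_{\bar{\A}}$ and $\dot{\U}^\imath_{\bar{\A}}$, whence the invariance is immediate.)
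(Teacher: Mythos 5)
Your proof is correct and takes essentially the same approach as the paper: both arguments observe that $T_j$ is an algebra automorphism preserving $\dot{\U}_{\bar{\A}}$, use $T_j(u\cdot m)=T_j(u)\cdot T_j(m)$, and then invoke the defining characterisation of $\dot{\U}^\imath_{\bar{\A}}$; the only cosmetic difference is that you route through Lemma~\ref{lem:UidotA} (restricting to $m=\mathbf{1}_\lambda$) whereas the paper works with arbitrary $m\in\dot{\U}_{\bar{\A}}$ and notes that $T_j$ is a bijection on $\dot{\U}_{\bar{\A}}$.
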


\begin{proof}
Let $u\in \dot{\U}^\imath_{\bar{\A}}$. By definition, $u\cdot m\in \dot{\U}_{\bar{\A}}$ for any $m\in \dot{\U}_{\bar{\A}}$. By \cite[41.1.2]{Lus93}, we have $T_j(m)\in \dot{\U}_{\bar{\A}}$, and then $T_j(u)\cdot T_j(m)=T_j(u\cdot m)\in \dot{\U}_{\bar{\A}}$ for any $m\in \dot{\U}_{\bar{\A}}$. Since $m$ is arbitrary in $\dot{\U}_{\bar{\A}}$, $T_j(m)$ can be an arbitrary element in $\dot{\U}_{\bar{\A}}$, and hence  $T_j(u)\in \dot{\U}^\imath_{\bar{\A}}$ by the definition of $\dot{\U}^\imath_{\bar{\A}}$.
\end{proof}

In order to make braid group symmetries \cite{WZ23} and $\imath$-canonical bases \cite{BW18b} applicable, we shall also consider more general parameters, and consider $\Ui$ or $\dot{\U}^\imath$ over the field $\bF$. 

Following \cite[Table 3]{BW18b}, we define a balanced parameter $\bvs_\star=(\bvs_{i,\star})_{i\in\I_\circ}\in(\bF^\times)^{\I_\circ}$ such that the value of $\bvs_{i,\star}$ is given by the following table (recall $\bvs_{i,\star}$ depends only on real rank 1 Satake diagrams).

\begin{table}[h]
\caption{Value of $\bvs_{i,\star}$ ($i\in\I_\circ$) of real rank 1}
\label{table:dis-parameter}
\resizebox{5.5 in}{!}{%
\begin{tabular}{cccccccc}
\hline {AI}$_1$ & {AII}$_3$ & {AIII}$_{11}$ & \text { AIV, } {n} $\geq 2$ & \text { BII, } {n} $\geq 2$ & \text { CII, } {n} $\geq 3$ & \text { DII, } {n} $\geq 4$ & \text { FII } \\
\hline $ q^{-1}$ & $ q$ & $1$ & \eqref{eq:pstarA4} & $ q^{2n-3}$ & $ q^{n-1}$ & $ q^{n-2}$ & $ q^5$ \\
\hline & & & & & &
\end{tabular}
}
\end{table}
For (AIV), we define $\vs_{1,\star}=\vs_{n,\star}$ such that $\vs_{1,\star}^2=(-1)^nq^{n-1}$. i.e.,
\begin{align}
    \label{eq:pstarA4}
    \vs_{1,\star}=\vs_{n,\star}= (-q)^{n/2}q^{-1/2}.
\end{align}


\subsection{$\imath$-divided powers}
\label{sec:idiv}
We recall from \cite[Section 5]{BW21} a class of real rank 1 elements, called $\imath$-divided powers. We review the integrability for these elements.

\subsubsection{} 
\label{subsec:divid-1} Let $i\in\I_\circ$ be such that $\tau(i)\neq i$; $Y_i:=\vs_iT_{w_\bullet}(E_{\tau i})K_i^{-1}$. Set 
\begin{align}
\label{eq:div1}
B_i^{(m)}=&\frac{B_i^m}{[m]_i!}=\sum_{a=0}^m q_i^{-a(m-a)}Y_i^{(a)}F_i^{(m-a)}\in \tUi,\qquad \forall m\geq0,
\\
B_{i,\zeta}^{(m)}=&B_i^{(m)}\mathbf{1}_\zeta\in\dot{\U}^\imath,\qquad \forall \zeta\in X_\imath.
\end{align}


\subsubsection{}
\label{subsec:divid-2}
Let $i\in\I_\circ$ be such that $\tau(i)=i= w_\bullet i$; set 
\begin{align}
&& B_{i,\odd}^{(m)}=\frac{1}{[m]_{i}!}
\left\{ \begin{array}{ccccc}
B_i\prod_{j=1}^k \Big(B_i^2 - [2j-1]_{i}^2 q_i \vs_i  \Big) & \text{if }m=2k+1,\\
\prod_{j=1}^k \Big(B_i^2 - [2j-1]_{i}^2 q_i \vs_i  \Big) &\text{if }m=2k; \end{array}\right.
  \label{eq:iDPodd} \\
  \notag\\
&& B_{i,\ev}^{(m)}= \frac{1}{[m]_{i}!}
\left\{ \begin{array}{ccccc}
B_i\prod_{j=1}^k \Big(B_i^2 - [2j]_{i}^2 q_i \vs_i \Big) & \text{if }m=2k+1,\\
\prod_{j=1}^{k} \Big(B_i^2 - [2j-2]_{i}^2 q_i \vs_i \Big) &\text{if }m=2k. \end{array}\right.
 \label{eq:iDPev}
\end{align}


For $m\geq1$, $\zeta\in X_\imath$, define
\begin{align}
    B_{i,\zeta}^{(m)}:=\begin{cases}
        B_{i,\ev}^{(m)}\mathbf{1}_{\zeta}& \text{ if }\langle h_i, \zeta \rangle \in2\Z,
        \\
        B_{i,\odd}^{(m)}\mathbf{1}_{\zeta}& \text{ if }\langle h_i,\zeta \rangle \notin2\Z.
    \end{cases}
\end{align}   
For $\zeta\in X_\imath$ (which is given when we consider divided powers in $\dot{\U}^\imath$), we denote by $B_i^{(m)}\in\Ui$ (either $B_{i,\odd}^{(m)}$ or $B_{i,\ev}^{(m)}$, but unique) such that $B_{i,\zeta}^{(m)}=B_i^{(m)}\mathbf{1}_\zeta\in\dot{\U}^\imath$. 


\subsubsection{}
\label{subsec:divid-3}
Let $i\in\I_\circ$ be such that $\tau(i)=i\neq \bw i$; set
\begin{align}
\label{def:ZY}
    \mathfrak{Z}_i:=&\frac{\vs_ir_i(T_{\bw}(E_i))}{q_i^{-1}-q_i}\in\U_\bullet^+,\qquad
Y_i:=\vs_i T_{\bw}(E_i) K_i^{-1},
\\
\mathfrak{Z}_i^{(m)}:=&\frac{\mathfrak{Z}_i^m}{[m]_i!},\qquad Y_i^{(m)}=\frac{Y_i^{m}}{[m]_i!},\quad \forall m\geq0.
\end{align}


For any $m\geq0$, we define
\begin{align}
\label{eq:div3}
\begin{split}
    b_i^{(m)}:=&\sum_{a=0}^mq_i^{-a(m-a)} Y_i^{(a)}F_i^{(m-a)},
    \\ 
    B_i^{(m)}:=&b_i^{(m)}+\frac{q}{q-q^{-1}}\sum_{k\geq1}q_i^{k(k+1)/2}\mathfrak{Z}_i^{(k)}b_i^{(m-2k)}.
    \end{split}
\end{align}

\begin{lemma}
    For $m\geq0$, we have $B_i^{(m)}\in\Ui$.
\end{lemma}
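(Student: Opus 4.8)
\emph{Strategy.} The plan is to prove $B_i^{(m)}\in\Ui$ by induction on $m$. The cases $m=0,1$ are immediate from \eqref{eq:div3}: when $m\le 1$ the $k\ge 1$ sum is empty since $b_i^{(m-2k)}=0$, while $b_i^{(0)}=1$ and $b_i^{(1)}=Y_i+F_i=B_i$, so $B_i^{(0)}=1$ and $B_i^{(1)}=B_i$ both lie in $\Ui$. Since $\mathfrak Z_i\in\U_\bullet^+\subseteq\Ui$ (hence $\mathfrak Z_i^{(k)}\in\Ui$) and $\U_\bullet\subseteq\Ui$, for the inductive step it suffices to express $B_i^{(m)}$ as a polynomial in $B_i$, in $B_i^{(0)},\dots,B_i^{(m-1)}$, and in $\U_\bullet$.

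\emph{The key relations and the inductive step.} Everything hinges on commuting $F_i$ past $Y_i=\vs_i T_{\bw}(E_i)K_i^{-1}$. Applying the standard formula for $[F_i,x]$ with $x\in\U^+$ in terms of the skew-derivation $r_i$ of \eqref{eq:rr} and its left analogue (cf. \cite{Lus93}), together with the identity $r_i(T_{\bw}(E_i))=(q_i^{-1}-q_i)\vs_i^{-1}\mathfrak Z_i$ built into \eqref{def:ZY}, one obtains a relation of the shape
\[
F_iY_i=q_i^{-2}\,Y_iF_i-\mathfrak Z_i+(\text{element of }\U_\bullet^+)\cdot(\text{Cartan element of }\U),
\]
the hypothesis $\tau i=i\neq\bw i$ of this subsection being exactly what makes the derivation contributions land in $\U_\bullet^+$ with scalars in $\bar{\A}[\vs_i^{\pm1}]$; analogously, commuting $\mathfrak Z_i^{(k)}$ (which has weight in $\N\I_\bullet$) past $F_i$, past $Y_i$, or past the Cartan part of $\Ui$ produces a power of $q$ together with terms of strictly smaller $\mathfrak Z_i$-degree, all in $\U_\bullet\cdot\U^0$. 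Feeding these into the expansion of $B_i\cdot b_i^{(m-1)}=(F_i+Y_i)\sum_a q_i^{-a(m-1-a)}Y_i^{(a)}F_i^{(m-1-a)}$, the part that ignores the corrections reproduces $[m]_i\,b_i^{(m)}$ (the quantum binomial identity $b_i^{(m)}=(F_i+Y_i)^m/[m]_i!$ valid when $F_iY_i=q_i^{-2}Y_iF_i$), while the remaining terms are of the form $(\text{element of }\U_\bullet\cdot\U^0)\cdot\mathfrak Z_i^{(k)}b_i^{(m-1-2k)}$ with $k\ge 1$; treating each $B_i\cdot\mathfrak Z_i^{(k)}b_i^{(m-1-2k)}$ the same way and comparing with \eqref{eq:div3} — this is where the normalizing scalars $\tfrac{q}{q-q^{-1}}$ and $q_i^{k(k+1)/2}$ get pinned down — yields $B_i\cdot B_i^{(m-1)}=[m]_i\,B_i^{(m)}+R_{m-1}$ with $R_{m-1}$ lying in the subalgebra generated by $B_i^{(0)},\dots,B_i^{(m-1)}$ and $\U_\bullet$. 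Since $[m]_i$ is invertible over $\bF$, the inductive step follows.

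\emph{Main obstacle.} The delicate point is the Cartan bookkeeping: the $K$-factors produced by $[F_i,T_{\bw}(E_i)]$ are not individually in $\Ui$, and one must verify that, with the normalization in \eqref{eq:div3}, all of them recombine so that $R_{m-1}$ — equivalently $B_i^{(m)}$ — lands in $\Ui$. This is precisely the real rank $1$ analysis of $\imath$-divided powers carried out in \cite[Section 5]{BW21} (see also \cite[Section 3]{BW18b}); alternatively one may simply identify $B_i^{(m)}$ with the $\imath$-divided power constructed there and invoke their result.
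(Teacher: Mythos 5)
Your proposal is correct and follows essentially the same route as the paper: the paper's proof is a one-line reference to \cite[Lemma 5.9]{BW21}, whose argument proceeds by exactly the inductive commutation scheme you sketch (commuting $F_i$ past $Y_i$ via $r_i$ and ${}_ir$ to produce $\mathfrak Z_i$ and Cartan-weighted $\U_\bullet^+$ corrections, then recursion on $B_i\cdot B_i^{(m-1)}$). Your closing remark that one may simply identify $B_i^{(m)}$ with the element of \cite[\S5]{BW21} and invoke their result is in fact precisely what the paper does.
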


\begin{proof}
    The proof is essentially the same as \cite[Lemma 5.9]{BW21}. 
\end{proof}

Define $B_{i,\zeta}^{(m)}=B_{i}^{(m)}\mathbf{1}_\zeta\in\dot{\U}^\imath$ for $\zeta\in X_\imath$.

\subsection{Properties for $\imath$-divided powers}
For all the $\imath$-divided powers $B_{i,\zeta}^{(m)}$ in the previous subsection, we have the following results.

\begin{proposition}
\label{prop:idiv-integral}
Let $\vs_i\in \bar{\A}$ for $i\in \wI$. We have $B_{i,\zeta}^{(m)}=B_{i}^{(m)}\mathbf{1}_\zeta\in\dot{\U}^\imath_{\bar{\A}}$ for any $m\geq0$, $\zeta\in X_\imath$, $i\in\I_\circ$.
\end{proposition}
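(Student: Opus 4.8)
The plan is to verify the condition of Definition~\ref{def:integral-Uidot} directly, treating separately the three types of $i\in\I_\circ$ that occur in \S\ref{subsec:divid-1}, \S\ref{subsec:divid-2} and \S\ref{subsec:divid-3}. Following the reduction of \cite[\S3.4]{BW18b} (using Lemma~\ref{lem:UidotA} and the fact that $\dot{\U}_{\bar{\A}}$ is generated over $\bar{\A}$ by the elements $E_j^{(n)}\mathbf{1}_\lambda$ and $F_j^{(n)}\mathbf{1}_\lambda$, $j\in\I$, $n\geq0$, $\lambda\in X$), it suffices to show that $B_i^{(m)}$ sends each such generator into $\dot{\U}_{\bar{\A}}$. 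I would first record two building blocks, both available because $\vs_i\in\bar{\A}$: (i) $F_i^{(b)}\mathbf{1}_\lambda\in\dot{\U}_{\bar{\A}}$; and (ii) writing $Y_i=\vs_iT_{w_\bullet}(E_{\tau i})K_i^{-1}$ (with $\tau i$ replaced by $i$ in cases \S\ref{subsec:divid-2}, \S\ref{subsec:divid-3}), one has $Y_i^{(a)}=\vs_i^a\,q^{c}\,T_{w_\bullet}(E_{\tau i})^{(a)}K_i^{-a}$ for some $c\in\Z$, where $T_{w_\bullet}(E_{\tau i})^{(a)}$ is a PBW divided-power root vector: since $w_\bullet(\alpha_{\tau i})\in\cR^+$, a reduced word of $w_0$ may be chosen beginning with a reduced word of $w_\bullet$ followed by $s_{\tau i}$, and then Proposition~\ref{prop:PBWUdot} gives $T_{w_\bullet}(E_{\tau i})^{(a)}\mathbf{1}_\lambda\in\dot{\U}_{\bar{\A}}$, whence $Y_i^{(a)}\mathbf{1}_\lambda\in\dot{\U}_{\bar{\A}}$.

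Given these, the case \S\ref{subsec:divid-1} ($\tau i\neq i$) is immediate: by \eqref{eq:div1}, $B_i^{(m)}$ is an $\bar{\A}$-linear combination of the integral products $Y_i^{(a)}F_i^{(m-a)}$. The case \S\ref{subsec:divid-2} ($\tau i=i=w_\bullet i$) is the quasi-split situation of \cite[\S5]{BW21}: expanding the products in \eqref{eq:iDPodd} and \eqref{eq:iDPev} and dividing by $[m]_i!$ rewrites $B_{i,\odd}^{(m)}$ and $B_{i,\ev}^{(m)}$ as $\bar{\A}$-combinations of elements $Y_i^{(a)}F_i^{(b)}$, the coefficients lying in $\bar{\A}$ once $\vs_i\in\bar{\A}$; replacing $\A$ by the larger ring $\bar{\A}$ leaves that argument untouched, so $B_{i,\zeta}^{(m)}\in\dot{\U}^\imath_{\bar{\A}}$ in both cases.

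The case \S\ref{subsec:divid-3} ($\tau i=i\neq w_\bullet i$) is the one requiring real work, and is the expected main obstacle. Here, by \eqref{eq:div3}, $B_i^{(m)}=b_i^{(m)}+\frac{q}{q-q^{-1}}\sum_{k\geq1}q_i^{k(k+1)/2}\mathfrak{Z}_i^{(k)}b_i^{(m-2k)}$ with $\mathfrak{Z}_i=\vs_ir_i(T_{w_\bullet}(E_i))/(q_i^{-1}-q_i)\in\U_\bullet^+$ as in \eqref{def:ZY}. Each $b_i^{(m-2k)}\mathbf{1}_\zeta$ is integral by the argument of \S\ref{subsec:divid-1}, but the scalar $\frac{q}{q-q^{-1}}$ and the denominator $q_i^{-1}-q_i$ hidden inside $\mathfrak{Z}_i$ are not units of $\bar{\A}$, so integrality of the correction terms is not formal. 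I would follow \cite[Lemma~5.9 and Proposition~5.10]{BW21}: one checks that $r_i(T_{w_\bullet}(E_i))$ is divisible by $q_i^{-1}-q_i$ in the $\bar{\A}$-form $\U^+_{\bullet,\bar{\A}}$ of $\U_\bullet^+$, so that the divided powers $\mathfrak{Z}_i^{(k)}$ already lie in $\U^+_{\bullet,\bar{\A}}$, and then that the residual factor $\tfrac{1}{q-q^{-1}}$ is cancelled, when the full sum $b_i^{(m)}+\tfrac{q}{q-q^{-1}}\sum_k q_i^{k(k+1)/2}\mathfrak{Z}_i^{(k)}b_i^{(m-2k)}$ is evaluated on $\dot{\U}_{\bar{\A}}$, by a telescoping identity among the $\mathfrak{Z}_i^{(k)}$-terms. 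As the reduced word used for $T_{w_\bullet}$ lies entirely inside $\I_\bullet$, this computation is the same as in the quasi-split case of \cite{BW21} and transfers verbatim. Combining the three cases with Lemma~\ref{lem:UidotA} and Definition~\ref{def:integral-Uidot} then yields $B_{i,\zeta}^{(m)}\in\dot{\U}^\imath_{\bar{\A}}$ for all $m\geq0$, $\zeta\in X_\imath$, $i\in\I_\circ$; the first two cases are routine bookkeeping with the explicit formulas and the PBW basis of $\dot{\U}_{\bar{\A}}$.
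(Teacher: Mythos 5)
Your overall strategy coincides with the paper's: reduce via Lemma~\ref{lem:UidotA} to showing $B_i^{(m)}\mathbf{1}_\lambda\in\dot{\U}_{\bar{\A}}$ and treat the three rank-one shapes of $i$ separately. Your treatment of the case $\tau i\neq i$ is correct (the paper gets $T_{w_\bullet}(E_{\tau i})^{(a)}\in\U^+_{\A}$ from \cite[Proposition 41.1.3]{Lus93} rather than from a PBW word beginning with $w_\bullet$, but both work), and for $\tau i=i=w_\bullet i$ both you and the paper ultimately lean on the Berman--Wang closed formulas expressing $B_{i,\ev/\odd}^{(m)}\mathbf{1}_\lambda$ as $\bar{\A}$-combinations of $E_i^{(a)}F_i^{(b)}\mathbf{1}_\lambda$; calling this ``routine bookkeeping'' understates it, but the reference carries the load.

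The gap is in the case $\tau i=i\neq w_\bullet i$, precisely where you anticipated the difficulty. Establishing $\mathfrak{Z}_i^{(k)}\in\U^+_{\bar{\A}}$ (which is \cite[Proposition 5.6]{BW21}) does \emph{not} suffice: the prefactor $\tfrac{q}{q-q^{-1}}$ in \eqref{eq:div3} still has to be absorbed, and your proposed mechanism --- a ``telescoping identity'' among the $\mathfrak{Z}_i^{(k)}b_i^{(m-2k)}$-terms that makes the whole sum divisible by $q-q^{-1}$ --- is neither stated nor proved anywhere, and it is not how the integrality actually arises. What is true, and what the paper proves, is the stronger term-by-term divisibility $\tfrac{\mathfrak{Z}_i^{(k)}}{q-q^{-1}}\in\U^+_{\bar{\A}}$ for each $k\geq 1$. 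This rests on the fact that $r_i(T_{w_\bullet}(E_i))$ is divisible in $\U^+_{\A}$ by the \emph{product} $(q_i^{-1}-q_i)(q-q^{-1})$, not just by $q_i^{-1}-q_i$; the paper extracts this from the bar involution $\psi$ together with the proof of \cite[Lemma 5.7]{BW21}, and then passes to divided powers via \cite[Lemma 5.11]{BW21}. Without this extra factor of $q-q^{-1}$ your argument leaves $B_i^{(m)}\mathbf{1}_\lambda$ only in $\tfrac{1}{q-q^{-1}}\dot{\U}_{\bar{\A}}$. (A side remark: this case is not ``the quasi-split case of \cite{BW21}'' --- quasi-split means $\I_\bullet=\emptyset$, which forces $w_\bullet i=i$, so $\mathfrak{Z}_i$ only appears when $\I_\bullet\neq\emptyset$.)
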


\begin{proof}
   Recall that $\dot{\U}_{\bar{\A}}$ is generated by the elements $E_j^{(n)}\mathbf{1}_\lambda$, $F_j^{(n)}\mathbf{1}_\lambda$ for $j\in\I$, $\lambda\in X$. 
   By Lemma~\ref{lem:UidotA}, it is sufficient to show that $B_{i}^{(m)}\mathbf{1}_\lambda\in\dot{\U}_{\bar{\A}}$, for $i\in\I_\circ$, $m\geq0$. 
   
If $\tau(i)\neq i$, by \cite[\S5.5.1]{BW21}, we have 
\begin{align}
B_{i}^{(m)}\mathbf{1}_\lambda=\frac{B_i^{m}}{[m]_i!} \mathbf{1}_\lambda=\sum_{a=0}^mq_i^{-a(m-a)}Y_i^{(a)}F_i^{(m-a)}\mathbf{1}_\lambda,
\end{align}
where $Y_i=\vs_iT_{w_\bullet}(E_{\tau i})K_i^{-1}$. Note that $\big(T_{w_\bullet}(E_{\tau i})\big)^{(a)}=T_{w_\bullet}(E_{\tau i}^{(a)})\in\U^+_{\A}$, for any $a\in\N$; see \cite[Proposition 41.1.3]{Lus93}. Hence, $B_{i}^{(m)}\mathbf{1}_\lambda\in \dot{\U}_{\bar{\A}}$ in this case. 

If $\tau(i)=i\neq w_\bullet i$, using the definition \eqref{eq:div3}, we have $b_i^{(m)}\mathbf{1}_\lambda\in \dot{\U}_{\bar{\A}}$ similar to the above case. Hence, by definition \eqref{eq:div3}, it is enough to prove that $\frac{\mathfrak{Z}_i^{(m)}}{q-q^{-1}}\in\U_{\bar{\A}}^+$. By \cite[Proposition 5.6]{BW21}, we have $\mathfrak{Z}_i^{(m)}\in\U_{\bar{\A}}^+$ for $m\geq0$. Consider $\U$ and $\Ui$ over $\Q(v)$. Then there exists a bar involution $\psi$ of the $\Q$-algebra $\U$ such that $v\mapsto v^{-1}$, $E_j\mapsto E_j$, $F_j\mapsto F_j$, $K_j\mapsto K_j^{-1}$ for $j\in\I$; cf. \cite[\S3.1.12]{Lus93}. By using $\psi$ and the proof of \cite[Lemma 5.7]{BW21}, we know 
$\frac{r_i(T_{\bw}(E_i))}{(q_i^{-1}-q_i)(q-q^{-1})}\in\U^+_\A$, and then 
$\mathfrak{Z}_i/(q-q^{-1})\in \U^+_{\bar{\A}}$. Therefore,  $$\frac{\mathfrak{Z}_i^{(m)}}{q-q^{-1}}=\frac{\mathfrak{Z}_i^m}{[m]_i!(q-q^{-1})}\in\U_{\bar{\A}}^+,$$  by using \cite[Lemma 5.11]{BW21}.

It remains to consider the case $\tau(i)=i= w_\bullet i$. For $\vs_i=q_i^{-1}$, the formulas of $B_{i}^{(m)}\mathbf{1}_\lambda$ in terms of elements in $\dot{\U}$ are given in \cite[Propositions 2.8, 3.5]{BeW18}. By similar computations {\em loc. cit.}, we can generalize Berman-Wang's formulas to general parameter $\vs_i$; for example,
if $\langle h_i,\lambda\rangle=2d$, for $d\in\Z$, we have
\begin{align*}
B_{i}^{(2n)}\mathbf{1}_\lambda=B_{i,\ev}^{(2n)}\mathbf{1}_\lambda
&\notag
= \sum_{c=0}^n \sum_{a=0}^{2n-2c} (q_i\vs_i)^{a+c} q_i^{2(a+c)(n-a-d)-2ac-\binom{2c+1}{2}}  \qbinom{n-c-a-d}{c}_{q_i^2}\times
\\ &\qquad\qquad \times E^{(a)}_i  F^{(2n-2c-a)}_i\mathbf{1}_\lambda.\label{t2mdot}
\end{align*}
Using these formulas, we conclude $B_{i,\zeta}^{(m)}\in \dot{\U}^\imath_{\bar{\A}}$ in this case.  
\end{proof}

Let $\bvs=(\vs_i)_{i\in\I_\circ}$ and $\bvs'=(\vs_i')_{i\in\I_\circ}$ be two balanced parameters. 
Similar to Lemma~\ref{lem:iso-paremeter} (see also \cite[Lemma 2.5.1]{Wat21}), there exists an isomorphism \begin{align}
\phi_{\bvs,\bvs'}:=\phi_{\bvs'}\circ\phi_{\bvs}^{-1}:\Ui_{\bvs}\rightarrow \Ui_{\bvs'}
\end{align}
which sends 
$B_i\mapsto \sqrt{\vs_{i}(\vs_i')^{-1} }B_i$, $E_j\mapsto E_j$, $K_j\mapsto K_j$, $k_r\mapsto k_r$, for $i\in \I_\circ$, $j\in \I_\bullet$, $r\in\I\setminus\I_{\circ,\tau}$. Then $\phi_{\bvs,\bvs'}$ preserves the ideal in the definition \eqref{def:Uilambda} of ${}_{\lambda'}\Ui_{\lambda''}$. Hence, $\phi_{\bvs,\bvs'}$ induces an isomorphism (also denoted by $\phi_{\bvs,\bvs'}$)
\begin{align}
\phi_{\bvs,\bvs'}:\dot{\U}^{\imath}_{\bvs}\rightarrow \dot{\U}^\imath_{\bvs'},
\end{align}
such that $\phi_{\bvs,\bvs'}(\mathbf{1}_\lambda)=\mathbf{1}_\lambda$. The inverse of $\phi_{\bvs,\bvs'}$ is given by $\phi_{\bvs',\bvs}$.

\begin{lemma}
\label{lem:div-iso}
Keep notations as above. Then the isomorphism $\phi_{\bvs,\bvs'}:\dot{\U}^{\imath}_{\bvs}\rightarrow \dot{\U}^\imath_{\bvs'}$ satisfies 
\begin{align}
    \phi_{\bvs,\bvs'}(B_{i,\zeta}^{(m)})=\sqrt{\vs_i^m(\vs_i')^{-m}}B_{i,\zeta}^{(m)},\qquad \forall i\in\I_\circ,\;\zeta\in X_\imath\;m\geq0.
\end{align}
\end{lemma}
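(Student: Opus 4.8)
The plan is to promote $\phi_{\bvs,\bvs'}$ to an automorphism of the ambient quantum group, record how it scales each ``atomic'' ingredient of the $\imath$-divided powers, and then run through the three cases of \S\ref{sec:idiv}. Throughout, fix $i\in\I_\circ$ and put $\lambda:=\sqrt{\vs_i(\vs_i')^{-1}}$, so that $\vs_i=\lambda^{2}\vs_i'$.

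First I would observe that $\phi_{\bvs,\bvs'}=\phi_{\bvs'}\circ\phi_{\bvs}^{-1}$ is the restriction to $\Ui_{\bvs}$ of the automorphism $\Phi_{\bc}$ of $\U_{\bF}$ with $c_j=\vs_j'\vs_j^{-1}$ for $j\in\I_\circ$ and $c_j=1$ for $j\in\I_\bullet$. Indeed, $\phi_{\bvs}$ is the restriction of $\Phi_{\ov{\bvs}_\diamond\bvs}$ and $\phi_{\bvs'}$ that of $\Phi_{\ov{\bvs}_\diamond\bvs'}$ (as recorded after Lemma~\ref{lem:iso-paremeter}), and $\Phi_{\ba}\circ\Phi_{\bc}=\Phi_{\ba\bc}$ (componentwise product) by \eqref{def:Phi}, so the two copies of $\ov{\bvs}_\diamond$ cancel. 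Since $\Phi_{\bc}$ fixes every $K_{\mu}$ and acts on $\U^{+}_{\nu}$, for $\nu=-\sum_{k}m_{k}\alpha_{k}$, by the scalar $\prod_{k}c_{k}^{m_{k}/2}$, it restricts to the identity on $\U_{\bullet}$ and on the Cartan part, it fixes $r_{i}(T_{w_\bullet}(E_{i}))\in\U_{\bullet}^{+}$, and it sends $T_{w_\bullet}(E_{\tau i})$ to $\lambda^{-1}T_{w_\bullet}(E_{\tau i})$ (the simple root $\alpha_{\tau i}$ occurs with multiplicity $1$ in $w_\bullet(\alpha_{\tau i})$, all other simple roots occurring lie in $\I_\bullet$, and $c_{\tau i}=c_{i}$ because $\bvs,\bvs'$ are balanced). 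Keeping in mind that in $\dot{\U}^\imath_{\bvs'}$ the elements $Y_{i}$ and $\mathfrak{Z}_{i}$ are built with the parameter $\vs_i'$, one then reads off
\begin{align*}
&\phi_{\bvs,\bvs'}(B_{i})=\lambda B_{i},\qquad \phi_{\bvs,\bvs'}(F_{i})=\lambda F_{i},\qquad \phi_{\bvs,\bvs'}(Y_{i})=\lambda Y_{i},\\
&\phi_{\bvs,\bvs'}(\mathfrak{Z}_{i})=\lambda^{2}\mathfrak{Z}_{i},\qquad \phi_{\bvs,\bvs'}|_{\U_{\bullet}}=\mathrm{id},\qquad \phi_{\bvs,\bvs'}(\mathbf{1}_{\zeta})=\mathbf{1}_{\zeta},
\end{align*}
where in the $Y_{i}$- and $\mathfrak{Z}_{i}$-identities the discrepancy between $\vs_i$ and $\vs_i'$ is absorbed via $\vs_i=\lambda^{2}\vs_i'$. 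In short, $\phi_{\bvs,\bvs'}$ is homogeneous once $B_{i},F_{i},Y_{i}$ are assigned weight $1$, the scalar $\vs_i$ and $\mathfrak{Z}_{i}$ weight $2$, and $\U_{\bullet}$, the Cartan part and the idempotents weight $0$.

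Granting this, the lemma is a case-by-case inspection of \S\ref{subsec:divid-1}--\S\ref{subsec:divid-3}. When $\tau i\neq i$ we have $B_{i,\zeta}^{(m)}=\frac{B_{i}^{m}}{[m]_i!}\mathbf{1}_{\zeta}$, so $\phi_{\bvs,\bvs'}(B_{i,\zeta}^{(m)})=\lambda^{m}B_{i,\zeta}^{(m)}$ at once. When $\tau i=i=w_\bullet i$, each factor $B_{i}^{2}-[2j-1]_{i}^{2}q_{i}\vs_i$ (and likewise $B_{i}^{2}-[2j]_{i}^{2}q_{i}\vs_i$ and $B_{i}^{2}-[2j-2]_{i}^{2}q_{i}\vs_i$) is sent to $\lambda^{2}B_{i}^{2}-[2j-1]_{i}^{2}q_{i}\vs_i=\lambda^{2}\big(B_{i}^{2}-[2j-1]_{i}^{2}q_{i}\vs_i'\big)$; a product of $k$ such factors, possibly with an extra leading $B_{i}$, therefore collects a total scalar $\lambda^{2k}$ or $\lambda^{2k+1}=\lambda^{m}$, and since $\langle h_{i},\zeta\rangle$ is untouched by $\phi_{\bvs,\bvs'}$ the odd/even branch of $B_{i,\zeta}^{(m)}$ is preserved, giving $\phi_{\bvs,\bvs'}(B_{i,\zeta}^{(m)})=\lambda^{m}B_{i,\zeta}^{(m)}$. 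When $\tau i=i\neq w_\bullet i$, the element $b_{i}^{(m)}=\sum_{a}q_{i}^{-a(m-a)}Y_{i}^{(a)}F_{i}^{(m-a)}$ has weight $m$, each $\mathfrak{Z}_{i}^{(k)}$ has weight $2k$, hence every summand $\mathfrak{Z}_{i}^{(k)}b_{i}^{(m-2k)}$ of $B_{i}^{(m)}$ has weight $m$, and so $\phi_{\bvs,\bvs'}(B_{i}^{(m)})=\lambda^{m}B_{i}^{(m)}$; multiplying by $\mathbf{1}_{\zeta}$ finishes this case too.

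The one genuinely delicate point is the first step: identifying $\phi_{\bvs,\bvs'}$ with the restriction of $\Phi_{\bc}$ and checking that $\Phi_{\bc}$ scales $T_{w_\bullet}(E_{\tau i})$, $Y_{i}$ and $\mathfrak{Z}_{i}=\vs_i r_{i}(T_{w_\bullet}(E_{i}))/(q_i^{-1}-q_i)$ by exactly $\lambda^{-1}$, $\lambda$ and $\lambda^{2}$ respectively (the last two up to the $\vs_i$-versus-$\vs_i'$ bookkeeping). Once this weight assignment is in place, the three cases above are entirely routine.
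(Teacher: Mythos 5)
Your proposal is correct and follows essentially the same route as the paper: identify $\phi_{\bvs,\bvs'}$ with the restriction of $\Phi_{\ba}$ (equivalently your $\Phi_{\bc}$), compute the scaling on the atomic ingredients $B_i$, $Y_i$, $\mathfrak{Z}_i$, and then read off the factor $\lambda^{m}=\sqrt{\vs_i^m(\vs_i')^{-m}}$ on each $\imath$-divided power. The only difference is one of thoroughness: the paper writes out only the case $\tau i=i\neq w_\bullet i$ and asserts the others are similar, whereas you spell out all three cases (including the parameter bookkeeping that converts $\vs_i$ to $\vs_i'$ inside $Y_i$, $\mathfrak{Z}_i$, and the quadratic factors $B_i^2-[\,\cdot\,]^2q_i\vs_i$), which is a welcome amount of extra detail but the same argument.
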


\begin{proof}
We only prove the statement for the case $\tau(i)=i\neq w_\bullet i$, since the other two cases are similar. Set $a_i:= (\vs_i)^{-1}\vs_i'$ for $i\in \wI$ and $a_i=1$ for $i\in\I_\bullet$. Let $\Phi_{\ba}: \U_\bF\rightarrow \U_\bF$ be the map defined in \eqref{def:Phi} for $\ba=(a_i)_{i\in\I}$. 
It is clear from the definition that $\phi_{\bvs,\bvs'}$ equals the restriction of $\Phi_{\ba}$.
By \eqref{def:Phi} and \eqref{def:ZY}, we have $\Phi_{\ba}(\mathfrak{Z}_i)=a_i^{-1}\mathfrak{Z}_i$, $\Phi_{\ba}(Y_i)=a_i^{-1/2} Y_i$. Then by \eqref{eq:div3}, $\Phi_{\ba}(b_i^{(m)})=a_i^{-m/2} b_i^{(m)}$, $\Phi_{\ba}(B_i^{(m)})=a_i^{-m/2} B_i^{(m)}$. Therefore,  $\phi_{\bvs,\bvs'}(B_i^{(m)})=a_i^{-m/2} B_i^{(m)}$ as desired.
\end{proof}

\begin{proposition}
\label{prop:integralformsequiv}
Let $\vs_i=\pm v^{a_i}$ for some $a_i\in\Z$ for any $i\in\I_\circ$.
The $\bar{\A}$-form $\dot{\U}^\imath_{\bar{\A}}$ is generated  by the $\imath$-divided powers $B_{i,\zeta}^{(n)}$ ($i\in\I_\circ$) and $E_j^{(n)}\mathbf{1}_\zeta$, $F_j^{(n)}\mathbf{1}_\zeta$ ($j\in\I_\bullet$), for $n\geq1$ and $\zeta\in X_\imath$.
\end{proposition}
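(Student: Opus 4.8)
The plan is to reduce to the distinguished parameter $\bvs_\diamond$ and then transport the result via the isomorphism $\phi_{\bvs,\bvs'}$. Denote by $\dot{\U}^{\imath,\mathrm{gen}}_{\bar\A}$ the $\bar\A$-subalgebra of $\dot{\U}^\imath$ generated by the elements $B^{(n)}_{i,\zeta}$ ($i\in\I_\circ$) and $E_j^{(n)}\mathbf{1}_\zeta$, $F_j^{(n)}\mathbf{1}_\zeta$ ($j\in\I_\bullet$), for $n\geq 1$, $\zeta\in X_\imath$. First I would establish the inclusion $\dot{\U}^{\imath,\mathrm{gen}}_{\bar\A}\subseteq \dot{\U}^\imath_{\bar\A}$: since each $\vs_i=\pm v^{a_i}$ lies in $\bar\A$, Proposition~\ref{prop:idiv-integral} gives $B^{(n)}_{i,\zeta}\in\dot{\U}^\imath_{\bar\A}$, and $E_j^{(n)}\mathbf{1}_\zeta$, $F_j^{(n)}\mathbf{1}_\zeta$ lie in $\dot{\U}^\imath_{\bar\A}$ because they already lie in $\dot{\U}_{\bar\A}$ (they are products of the standard generators of $\dot{\U}_{\bar\A}$, and $\dot{\U}_{\bar\A}\cap\dot{\U}^\imath\subseteq\dot{\U}^\imath_{\bar\A}$ by Definition~\ref{def:integral-Uidot}). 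So the whole point is the reverse inclusion $\dot{\U}^\imath_{\bar\A}\subseteq\dot{\U}^{\imath,\mathrm{gen}}_{\bar\A}$.

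For the reverse inclusion I would argue as follows. When $\bvs=\bvs_\star$ is the parameter of Table~\ref{table:dis-parameter} (so all $\vs_{i,\star}=\pm v^{a_i}$), this is exactly \cite[Theorem~3.25, Theorem~4.19]{BW21} — the $\imath$-canonical basis theory shows that $\dot{\U}^\imath_{\bar\A}$ is generated by the $\imath$-divided powers together with the black-node divided powers; alternatively one can cite \cite[Definition~3.19 and subsequent results]{BW18b}. I would spell this out as the base case. For a general $\bvs$ with $\vs_i=\pm v^{a_i}$, note that the componentwise ratio $\vs_i/\vs_{i,\star}$ is of the form $\pm v^{b_i}$ with $b_i\in\Z$, hence a unit in $\bar\A$, so $\sqrt{\vs_i^m\vs_{i,\star}^{-m}}$ (interpreting the sign consistently via the balanced condition) is a unit in $\bar\A$ for every $m$. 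By Lemma~\ref{lem:div-iso}, the isomorphism $\phi_{\bvs_\star,\bvs}\colon\dot{\U}^\imath_{\bvs_\star}\to\dot{\U}^\imath_{\bvs}$ sends each $B^{(m)}_{i,\zeta}$ to a $\bar\A$-unit multiple of $B^{(m)}_{i,\zeta}$, and fixes the idempotents $\mathbf{1}_\zeta$ as well as the $E_j,F_j$ for $j\in\I_\bullet$; therefore $\phi_{\bvs_\star,\bvs}$ carries the generating set for $\dot{\U}^{\imath,\mathrm{gen}}_{\bar\A}$ (for parameter $\bvs_\star$) bijectively, up to $\bar\A$-units, to the generating set for $\dot{\U}^{\imath,\mathrm{gen}}_{\bar\A}$ (for parameter $\bvs$). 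Thus $\phi_{\bvs_\star,\bvs}$ restricts to an isomorphism of the ``generated'' subalgebras; combined with the fact (to be checked, using Lemma~\ref{lem:UidotA} and that $\phi$ is the restriction of some $\Phi_\ba$ with $\ba$ a vector of $\bar\A$-units) that $\phi_{\bvs_\star,\bvs}(\dot{\U}^\imath_{\bar\A,\bvs_\star})=\dot{\U}^\imath_{\bar\A,\bvs}$, the base-case equality $\dot{\U}^{\imath,\mathrm{gen}}_{\bar\A}=\dot{\U}^\imath_{\bar\A}$ for $\bvs_\star$ transports to the equality for $\bvs$.

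The main obstacle I anticipate is the base case itself, i.e.\ checking that for $\bvs_\star$ the integral form $\dot{\U}^\imath_{\bar\A}$ is indeed generated by these specific $\imath$-divided powers; this is where one must invoke the full strength of \cite{BW18b, BW21} (the construction of $\imath$-canonical bases and the explicit spanning results), and one has to make sure the hypotheses there — on the parameters, on the base ring $\bar\A$ versus $\A$, and on the root datum being simply connected — match the present setup. A secondary technical point is the compatibility of the integral forms under $\Phi_\ba$: one must verify that $\Phi_\ba$ maps the standard generators $E_i^{(n)}\mathbf{1}_\lambda$, $F_i^{(n)}\mathbf{1}_\lambda$ of $\dot{\U}_{\bar\A}$ to $\bar\A$-combinations of such, which is immediate from $\Phi_\ba(E_i^{(n)})=a_i^{n/2}E_i^{(n)}$, $\Phi_\ba(F_i^{(n)})=a_i^{-n/2}F_i^{(n)}$ once $a_i^{1/2}\in\bar\A^\times$, so $\Phi_\ba(\dot{\U}_{\bar\A})=\dot{\U}_{\bar\A}$ and hence $\phi_{\bvs_\star,\bvs}$ preserves $\dot{\U}^\imath_{\bar\A}$ by the defining property in Definition~\ref{def:integral-Uidot} together with Lemma~\ref{lem:UidotA}. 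With these two inputs in hand the argument closes.
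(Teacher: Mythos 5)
The overall shape of your argument — establish the generation statement for $\bvs_\star$ first, then transport via $\phi_{\bvs_\star,\bvs}$ using Lemma~\ref{lem:div-iso} — matches the paper's strategy, but there is a genuine gap in your treatment of the ground ring. You assert parenthetically that ``all $\vs_{i,\star}=\pm v^{a_i}$,'' but by \eqref{eq:pstarA4} this fails in type AIV with $n$ odd: there $\vs_{1,\star}=(-q)^{n/2}q^{-1/2}$ involves a square root of $-q$, which does \emph{not} lie in $\bar\A=\Z[v^{\pm1}]$. Consequently the base case for $\bvs_\star$ cannot be formulated over $\bar\A$, and the scalar $\sqrt{\vs_i^m\vs_{i,\star}^{-m}}$ appearing in Lemma~\ref{lem:div-iso} need not be a unit of $\bar\A$ even when it exists — in general $\sqrt{\pm v^{k}}$ lives only in $\Z[v^{\pm1/2},(-v)^{\pm1/2}]$, not in $\bar\A$. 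Your plan as written would simply fail at the step ``$\phi_{\bvs_\star,\bvs}$ carries the generating set \ldots up to $\bar\A$-units.''

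The paper resolves exactly this by introducing the larger ring $\widetilde\A=\Z[v^{\pm1/2},(-v)^{\pm1/2}]$, proving the $\bvs_\star$ base case and the transport step at the $\widetilde\A$-level (where the square roots are honest units), and then \emph{descending} from $\widetilde\A$ to $\bar\A$: the extension $\bar\A\subset\widetilde\A$ is free of rank $3$ hence faithfully flat, so $\dot{\U}^\imath_{\bvs,\bar\A}\hookrightarrow\dot{\U}^\imath_{\bvs,\widetilde\A}$ injectively, and the identity $\Q(v)\cap\widetilde\A=\bar\A$ combined with freeness of $\dot{\U}^\imath_{\bvs,\widetilde\A}$ yields $\dot{\U}^\imath_{\bvs,\Q(v)}\cap\dot{\U}^\imath_{\bvs,\widetilde\A}=\dot{\U}^\imath_{\bvs,\bar\A}$, which pins down generation over $\bar\A$. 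This descent step is essential and is entirely absent from your proposal; without it, and without leaving $\bar\A$, the $\phi_{\bvs_\star,\bvs}$-transport simply does not go through for the full set of Satake types.
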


 \begin{proof}
 In order to avoid confusions, we denote  $\dot{\U}^\imath_{\bar{\A}}$ by $\dot{\U}^\imath_{\bvs,\bar{\A}}$. 
 
 We first consider the case $\bvs=\bvs_\star=(\vs_{i,\star})_{i\in\I_\circ}$. Since $\vs_{i,\star}\in \{\pm q^{1/2\Z},\pm (-q)^{1/2\Z}\}$ for $i\in\I_\circ$, we introduce a ring $\widetilde{\A}=\Z[v^{\pm1/2},(-v)^{\pm1/2}]$. Then, using the same arguments of \cite{BW18b,BW21}, there exists an $\imath$-canonical basis
 on the $\widetilde{\A}$-form $\dot{\U}^\imath_{\bvs_\star,\widetilde{\A}}$, and this shows that $\dot{\U}^\imath_{\bvs_\star,\widetilde{\A}}$ is generated  by the $\imath$-divided powers $B_{i,\zeta}^{(n)}$ ($i\in\I_\circ$) and $E_j^{(n)}\mathbf{1}_\zeta$, $F_j^{(n)}\mathbf{1}_\zeta$ ($j\in\I_\bullet$), for $n\geq1$ and $\zeta\in X_\imath$; see \cite[Corollary 7.5]{BW21}. Moreover, $\dot{\U}^\imath_{\bvs_\star,\widetilde{\A}}$ is a free $\widetilde{\A}$-module.

We next consider a general parameter $\bvs$ satisfying $\vs_i=\pm v^{a_i}, a_i\in \Z$. By Lemma~\ref{lem:div-iso}, the isomorphism $\phi_{\bvs,\bvs_\star}:\dot{\U}^{\imath}_{\bvs}\rightarrow \dot{\U}^\imath_{\bvs_\star}$ induces an isomorphism $\phi_{\bvs,\bvs_\star}:\dot{\U}^\imath_{\bvs,\widetilde{\A}}\rightarrow \dot{\U}^\imath_{\bvs_\star,\widetilde{\A}} $ of $\widetilde{\A}$-algebras by noting that
$\sqrt{\vs_i(\vs_{i,\star})^{-1}}\in \{v^{\pm1/2\Z},(-v)^{\pm1/2\Z}\}$ which is invertible in $\widetilde{\A}$. It follows from Lemma \ref{lem:div-iso} that  $\dot{\U}^\imath_{\bvs,\widetilde{\A}}$  is generated  by the $\imath$-divided powers $B_{i,\zeta}^{(n)}$ ($i\in\I_\circ$) and $E_j^{(n)}\mathbf{1}_\zeta$, $F_j^{(n)}\mathbf{1}_\zeta$ ($j\in\I_\bullet$), for $n\geq1$ and $\zeta\in X_\imath$.  

We have $\bar{\A}\subset\widetilde{\A}$ is an extension of rings with $\widetilde{\A}$ projective (and then flat) as $\bar{\A}$-module since $\widetilde{\A}=\bar{\A}\oplus \bar{\A}v^{1/2}\oplus \bar{\A}(-v)^{1/2}\cong \bar{\A}^{\oplus 3}$ as $\bar{\A}$-module. So the natural homomorphism (base change) $\dot{\U}^\imath_{\bvs,\bar{\A}}\rightarrow \dot{\U}^\imath_{\bvs,\widetilde{\A}}$ is injective; see e.g. \cite[Theorem 5.3.13]{WK17}. 

On the other hand, note that $\Q(v)\cap \Z[v^{\pm1/2},(-v)^{\pm1/2}]=\Z[v,v^{-1}]=\bar{\A}$. The $\Q(v)$-algebra $\tUi_{\bvs,\Q(v)}$ is generated by  the $\imath$-divided powers $B_{i,\zeta}^{(n)}$ ($i\in\I_\circ$) and $E_j^{(n)}\mathbf{1}_\zeta$, $F_j^{(n)}\mathbf{1}_\zeta$ ($j\in\I_\bullet$), for $n\geq1$ and $\zeta\in X_\imath$. Note that $\tUi_{\bvs,\Q(v)}\cap \tUi_{\bvs,\widetilde{\A}}=\tUi_{\bvs,\bar{\A}}$ since $\tUi_{\bvs,\widetilde{\A}}$ is a free $\widetilde{\A}$-module. 
Hence, $\dot{\U}^\imath_{\bvs,\bar{\A}}$ is generated by the $\imath$-divided powers $B_{i,\zeta}^{(n)}$ ($i\in\I_\circ$) and $E_j^{(n)}\mathbf{1}_\zeta$, $F_j^{(n)}\mathbf{1}_\zeta$ ($j\in\I_\bullet$), for $n\geq1$ and $\zeta\in X_\imath$. 
 \end{proof}



\subsection{Divided powers of root vectors for rank 1 Satake diagrams}
\label{subsec:divid-root-rank1}

In this subsection, we shall construct $\imath$divided powers of root vectors for rank 1 Satake diagrams in Table \ref{table:localSatake}. We also assume $\vs_i\in\bar{\A}=\Z[v,v^{-1}]$. 

\subsubsection{$\un{\text{Type  AI}_1}$}
In this case, 
$\cR^+(\bs_1)=\alpha_1$. Set $B_{\alpha_1}=B_1\in\tUi$. Define $B_{\alpha_1}^{(m)}$ to be $B_{i}^{(m)}$ as in \S\ref{subsec:divid-1}, for any $m\geq0$, since we shall consider it in $\dot{\U}^\imath$.  

\subsubsection{$\un{\text{Type  AII}_3}$}
In this case, 
$\cR^+(\bs_2)=\{\alpha_2,\alpha_1+\alpha_2,\alpha_2+\alpha_3,\alpha_1+\alpha_2+\alpha_3\}$. 
For any $m\geq0$, define $B_{\alpha_2}^{(m)}$ to be the $\imath$-divided power $B_{2}^{(m)}$ given by \eqref{eq:div3}; $B_{\alpha_1+\alpha_2}^{(m)}=\tTD_1^{-1}(B_{\alpha_2}^{(m)})$, $B_{\alpha_2+\alpha_3}^{(m)}=\tTD_3^{-1}(B_{\alpha_2}^{(m)})$, $B_{\alpha_1+\alpha_2+\alpha_3}^{(m)}=\tTD_{13}^{-1}(B_{\alpha_2}^{(m)})$. 

\subsubsection{$\un{\text{Type  AIII}_{11}}$} 

We have $\cR^+(\bs_1)=\{\alpha_1,\alpha_2\}$. Let $B_{\alpha_1}^{(m)}=\frac{B_1^m}{[m]^!}$, $B_{\alpha_2}^{(m)}=\frac{B_2^m}{[m]^!}$ for any $m\geq0$.

\subsubsection{$\un{\text{Type  AIV},n\geq2}$ }

In this case, $\cR^+(\bs_1)=\{\beta_i\mid 1\leq i\leq 2n-1 \}$.
We define for $j\neq n$, $m\geq0$
\begin{align*}
B_{\beta_j}^{(m)}=\frac{B_{\beta_j}^m}{[m]!}
=
\begin{cases}
\tTD_{2\cdots j}^{-1}(B_1^{(m)}),&\qquad 1\leq j\leq n-1,
\\
\tTD_{(n-1)\cdots(2n-j+1) }^{-1} (B_n^{(m)}),&\qquad n+1\leq j\leq 2n-1,
\end{cases}
\end{align*}
as Lusztig's divided powers.

For $j=n$, note that 
\begin{align*}
F_{\beta_n}^{(m)}=&\tTD_{1\cdots(n-1)}(F_n^{(m)})=\sum_{r=0}^{m}(-1)^rq^{r}\tTD_{2\cdots(n-1)}^{-1}(F_{1}^{(r)})(F_n^{(m)})\tTD_{2\cdots(n-1)}^{-1}(F_{1}^{(m-r)})
\\
=&\sum_{r=0}^{m}(-1)^rq^{r}F_{\beta_{n-1}}^{(r)}F_n^{(m)}F_{\beta_{n-1}}^{(m-r)}.
\end{align*}
We define 
\begin{align*}
B_{\beta_n}^{(m)}=&\sum_{r=0}^{m}(-1)^rq^{r}\tTD_{2\cdots(n-1)}^{-1}(B_{1}^{(r)})(B_n^{(m)})\tTD_{2\cdots(n-1)}^{-1}(B_{1}^{(m-r)})
\\
=&\sum_{r=0}^{m}(-1)^rq^{r}B_{\beta_{n-1}}^{(r)}B_n^{(m)}B_{\beta_{n-1}}^{(m-r)},\qquad \text{ for } m\geq0.
\end{align*}
Then the leading term of $B_{\beta_n}^{(m)}$ is  $F_{\beta_n}^{(m)}$.


\subsubsection{$\un{\text{Type BII}, n\geq2}$} 

In this case, $\cR^+(\bs_1)=\{\beta_i\mid 1\leq i\leq 2n-1\}$.
For $m\geq0$, we define 
\begin{align*}
    B_{\beta_j}^{(m)}=\begin{cases}   
    \tTD_{2\cdots i}^{-1}(B_1^{(m)}), &\text{ if } 1\leq j\leq n-1,
    \\
    \\
    \sum\limits_{r=0}^{m}(-1)^rq^{r}B_{\beta_{n-1}}^{(r)}F_n^{(m)}B_{\beta_{n-1}}^{(m-r)},
    &\text{ if }j= n,
    \\
    \\
    \tTD_{2\cdots n\cdots(2n-j+1)}^{-1}(B_1^{(m)}),
    & \text{ if } n+1\leq j\leq 2n-1.
    \end{cases}
\end{align*}
where $B_1^{(m)}$ are the $\imath$-divided powers defined by \eqref{eq:div3}. Note that $B_{\beta_n}^{(m)}$ is defined similar to Type AIV, and its leading term is $F_{\beta}^{(m)}$. The same argument also holds for the following cases, we do not repeat it again.  

\subsubsection{$\un{\text{Type  CII},n\geq3}$ }

$\cR^+(\bs_2)=\{\beta_i\mid 1\leq i\leq 4n-5\}$.
For $j\neq n$, $m\geq0$, we define 
\begin{align*}
B_{\beta_j}^{(m)}=\begin{cases}  
     \sum_{r=0}^{2m}(-1)^rq_{n-1}^{r}B_{\beta_{n-2}}^{(r)}B_n^{(m)}B_{\beta_{n-2}}^{(m-r)}
     &\text{ if }j=n-1,
     \\
     \\
     \sum_{r=0}^{m}\sum_{s=0}^{m}(-1)^{r+s}q_{2}^{r+s}B_{\beta_{2n-3}}^{(r)}B_2^{(s)}B_1^{(m)}B_2^{(m-s)}B_{\beta_{2n-3}}^{(m-r)}&\text{ if }j=2n-2,
     \\
     \\
     \sum_{r=0}^{2m}(-1)^rq_{n-1}^{r}B_{\beta_{3n-2}}^{(r)}B_n^{(m)}B_{\beta_{3n-2}}^{(m-r)}
     &\text{ if }j=3n-3,
     \\
     \\
    \frac{B_{\beta_j}^m}{[m]!} &\text{ otherwise}.
    \end{cases}
\end{align*}
\subsubsection{$\un{\text{Type  DII},n\geq4}$ }
In this case, $\cR^+(\bs_1)=\{\beta_i\mid 1\leq i\leq 2n-2\}$. 
Let $1\leq j\leq 2n-2$. Note that $B_{\beta_j}$ is of the form $\tTD_w^{-1}(B_1)$ for some $w\in \bW$; we then define $B_{\beta_j}^{(m)}=\tTD_w^{-1}(B_1^{(m)})$ for $m\geq 0$ where $B_1^{(m)}$ are the $\imath$-divided powers given by \eqref{eq:div3}.

\subsubsection{$\un{\text{Type  }F_4}$ }

$\cR^+(\bs_4)=\{\beta_i\mid 1\leq i\leq 15\}$.
For $j\neq n$, $m\geq0$, we define 
\begin{align*}
    B_{\beta_j}^{(m)}=\begin{cases}   \frac{B_{\beta_j}^m}{[m]!} &\text{ if }j=1,2,4,7,9,11,14,15,
    \\
    \\
    \sum_{r=0}^{2m}(-1)^rq_2^{r}B_{\beta_{2}}^{(r)}B_2^{(m)}B_{\beta_{2}}^{(m-r)}
    &\text{ if }j=3,
     \\
     \\
    \sum_{r=0}^{m}(-1)^rq_3^{r}B_{\beta_{3}}^{(r)}B_1^{(m)}B_{\beta_{3}}^{(m-r)}
    &\text{ if }j=5,
    \\
    \\
     \sum_{r=0}^{m}(-1)^rq_1^{r}B_{\beta_{5}}^{(r)}B_2^{(m)}B_{\beta_{5}}^{(m-r)}
    &\text{ if }j=6,
    \\
    \\
   {\sum_{r=0}^{m}(-1)^rq_3^{r}B_{\beta_{7}}^{(r)}\tTD_{323}^{-1}(B_4^{(m)})B_{\beta_{7}}^{(m-r)}} &\text{ if }j=8,
     \\ 
     \\
     \sum_{r=0}^{2m}(-1)^rq_3^{r}B_{\beta_{9}}^{(r)}B_1^{(m)}B_{\beta_{9}}^{(m-r)}
     &\text{ if }j=10,
     \\
     \\
     \sum_{r=0}^{m}(-1)^rq_2^{r}B_{\beta_{10}}^{(r)}B_2^{(m)}B_{\beta_{10}}^{(m-r)}
     &\text{ if }j=12,
     \\
     \\
     \sum_{r=0}^{m}(-1)^rq_1^{r}B_{\beta_{12}}^{(r)}B_1^{(m)}B_{\beta_{12}}^{(m-r)}
     &\text{ if }j=13.
    \end{cases}
\end{align*}

In conclusion, we have the following result from the construction of divided powers of root vectors.
\begin{lemma}
\label{lem:leading-div}
For the rank 1 Satake diagram in Table \ref{table:localSatake}, we have the leading term of $B_{\beta}^{(m)}$ is $F_\beta^{(m)}$, for any $\beta\in\cR^+(\bs_i)$, and $m\geq0$.
\end{lemma}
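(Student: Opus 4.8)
\textbf{Proof proposal for Lemma \ref{lem:leading-div}.}

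The plan is to verify the claim by a uniform case-by-case inspection, leveraging the fact that each divided power $B_\beta^{(m)}$ was defined by one of three mechanisms, and in each case the corresponding formula for $F_\beta^{(m)}$ in $\tU^-$ was recorded in Section \ref{sec:root-rank1} (in the displays computing $F_{\beta_k}$ via $\widetilde T$'s). First I would recall the grading isomorphism $\phi:\tU^-\otimes\tU_\bullet^+\otimes\tU^{\imath0}\to\tU^{\imath,\gr}$ of \eqref{eq:grade} and the meaning of ``leading term'': for $x\in\tUi$, if $x\in F+\sum_{\mu<\deg F}\tU_\mu$ with $F\in\tU^-$ homogeneous, we say the leading term of $x$ is $F$; equivalently $\gr(x)=\phi(F)$ under the filtration of \S\ref{sec:QSP}. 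By Theorem \ref{thm:rkone} we already know the leading term of $B_{\beta_k}$ itself is $F_{\beta_k}$, so what remains is to track how taking $m$-th divided powers and applying the various operators interacts with leading terms.

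The three mechanisms are: (i) $B_{\beta}^{(m)}$ is literally $\frac{B_\beta^m}{[m]_\beta!}$ (the ``otherwise'' cases in CII, $F_4$, all of AIII$_{11}$, the non-$n$ cases in AIV/BII, all of DII after unwinding $\tTD_w^{-1}$, etc.); (ii) $B_\beta^{(m)}=\tTD_w^{-1}(B_{\beta_0}^{(m)})$ for some $w\in W_\bullet$ applied to a rank-1 $\imath$-divided power $B_{\beta_0}^{(m)}$ attached to a simple root $\beta_0\in\{\alpha_i,\alpha_{\tau i}\}$; (iii) $B_\beta^{(m)}$ is a ``$q$-bracket divided power'' of the shape $\sum_r(-1)^rq_\ast^{r}B_{\gamma}^{(r)}\,Z^{(m)}\,B_\gamma^{(m-r)}$ with $Z$ a previously constructed root vector (the $\beta_n$ cases, and the analogous entries in CII and $F_4$). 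For (i): since taking powers and dividing by $[m]_\beta!$ is an operation internal to $\tUi$ that respects the filtration, and the associated graded algebra is a domain (it is $\tU^-\otimes\tU_\bullet^+\otimes\tU^{\imath0}$, a tensor product of domains), $\gr(B_\beta^m/[m]_\beta!)=\gr(B_\beta)^m/[m]_\beta!=\phi(F_\beta)^m/[m]_\beta!=\phi(F_\beta^m/[m]_\beta!)=\phi(F_\beta^{(m)})$; so the leading term is $F_\beta^{(m)}$. For (ii): the operators $\tTD_j$ ($j\in\I_\bullet$) restrict to $\tUi$ by Proposition \ref{prop:braid-iQG}(b) and preserve the filtration (they preserve $\tU_\bullet$ and act on the grading by $s_j$ on weights), so $\gr\circ\tTD_w^{-1}=\tTD_w^{-1}\circ\gr$ on the relevant pieces; combined with Proposition \ref{prop:idiv-integral}-style formulas showing $\gr(B_{\beta_0}^{(m)})=\phi(F_{\beta_0}^{(m)})$ in the rank-1 base cases (this is already checked inside the case-by-case construction), we get $\gr(B_\beta^{(m)})=\tTD_w^{-1}\phi(F_{\beta_0}^{(m)})=\phi(\tTD_w^{-1}F_{\beta_0}^{(m)})=\phi(F_\beta^{(m)})$. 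For (iii): apply $\gr$ to the bracket expression; since $\gr$ is an algebra map on the filtered pieces and $\gr(B_\gamma^{(r)})=\phi(F_\gamma^{(r)})$, $\gr(B_n^{(m)})=\phi(F_n^{(m)})$ (resp. $\gr(\tTD_{323}^{-1}B_4^{(m)})=\phi(\tTD_{323}^{-1}F_4^{(m)})$ in the $F_4$ $j=8$ case), the sum maps term-by-term onto the identical $q$-bracket expression in the $F$'s, which is exactly the formula for $F_\beta^{(m)}$ recorded in Section \ref{sec:root-rank1} (e.g. the displayed identity $F_{\beta_n}^{(m)}=\sum_r(-1)^rq^{r}F_{\beta_{n-1}}^{(r)}F_n^{(m)}F_{\beta_{n-1}}^{(m-r)}$ in Type AIV, and its analogues).

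The only genuine subtlety — and the step I would spend the most care on — is case (iii) when the inner datum $Z$ is not simply $F_\gamma$ but one of the squared-root-vector entries (the $\beta_{n-1}$ entry in CII with $B_n^{(m)}$ between $B_{\beta_{n-2}}$'s, where $\beta_{n-2}$ is a short root and the relevant bracket involves $q^2$; and the $\beta_3,\beta_{10}$ entries in $F_4$). There one must make sure the combinatorial coefficients $q_\ast^r$ in the $\imath$-side definition match the coefficients appearing in the Lusztig-$\widetilde T$ expansion of $F_\beta^{(m)}$ on the $F$-side; but this is precisely why those definitions were written with the indicated exponents, so the match is by design. Since $\gr$ is injective on each filtration piece (the images $\phi(F_\beta^{(m)})$ are linearly independent, being part of a PBW basis by Proposition \ref{prop:PBW-QG}/Theorem \ref{thm:iPBW}), showing $\gr(B_\beta^{(m)})=\phi(F_\beta^{(m)})$ is equivalent to the leading-term statement, completing the proof. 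I would close by remarking that this lemma is the divided-power refinement of Theorem \ref{thm:rkone} and feeds directly into \eqref{def:Bbeta-div} and the higher-rank integrality argument.
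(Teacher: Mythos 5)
Your proposal is correct and essentially spells out what the paper leaves implicit: the paper provides no separate proof for this lemma, instead asserting it as a consequence of the explicit constructions in \S\ref{subsec:divid-root-rank1} (cf.\ the remark in the BII case, ``Note that $B_{\beta_n}^{(m)}$ is defined similar to Type AIV, and its leading term is $F_{\beta}^{(m)}$. The same argument also holds for the following cases''). Your organization of the case-check into the three mechanisms (literal divided power, $\tTD_w^{-1}$-twist of a rank-1 $\imath$-divided power, and $q$-bracket divided power) is a clean way to present the uniform argument. One place I would tighten the wording is in mechanism (ii): saying $\tTD_w^{-1}$ ``preserves the filtration'' is not by itself enough, since the leading term being tracked is the top weight component in $\tU$ rather than the image in a filtration piece; the correct justification is that the lower-weight tails of $B_{\beta_0}^{(m)}$ differ from $F_{\beta_0}^{(m)}$ by positive sums of roots of the form $w_\bullet(\alpha_{i})$, $w_\bullet(\alpha_{\tau i})$ and elements of $\cR^+_\bullet$, all of which remain in $\cR^+$ under the action of $w\in W_\bullet$, so the weight ordering is preserved and the top component is sent to the top component. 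This is exactly the argument the paper carries out in detail inside the proof of Lemma~\ref{lem:leadingPBW} (via the set $I_{\beta_0}$), so it is a known refinement rather than a gap; it would be worth importing that reasoning explicitly.
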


Let $\cR^+_\bullet=\{\gamma_1,\dots,\gamma_t\}$. 
Using a reduced expression of the longest element $w_\bullet$ of $W_\bullet$, we can define the root vectors $E_{\gamma_j}$ and $F_{\gamma_j}$ for $1\leq j\leq t$,  by using the braid group symmetries $T_i$ ($i\in\I_\bullet$).
We can also define $E_{\gamma_j}^{(m)}$ and $F_{\gamma_j}^{(m)}$ for $m\geq0$; see Proposition \ref{prop:PBW-QG}. 

\begin{lemma}
\label{lem:div-integral}
Let $\bvs=(\vs_i)_{i\in\I_\circ}$ with $\vs_i\in\bar{\A}$. 
For the rank 1 Satake diagram in Table \ref{table:localSatake}, we have $B_{\beta_j}^{(m)}\mathbf{1}_\zeta\in \dot{\U}^\imath_{\bar{\A}}$, for any $\beta_j\in \cR^+(\bs_i)=\{\beta_1,\dots,\beta_r\}$ and $\zeta\in X_\imath$. 
\end{lemma}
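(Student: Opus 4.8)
The plan is to reduce the statement, case by case according to Table~\ref{table:localSatake}, to the integrality of the $\imath$-divided powers $B_{i,\zeta}^{(m)}$ established in Proposition~\ref{prop:idiv-integral}, together with Proposition~\ref{prop:Tblack} (invariance of $\dot{\U}^\imath_{\bar\A}$ under $T_j$, $j\in\I_\bullet$) and the integrality of Lusztig's divided powers $F_j^{(m)}$, $E_j^{(m)}$ for $j\in\I_\bullet$. Concretely, the root vectors $B_{\beta_j}^{(m)}$ constructed in \S\ref{subsec:divid-root-rank1} fall into three shapes: (i) $B_{\beta_j}^{(m)}=\tTD_w^{-1}(B_i^{(m)})$ for some $w\in W_\bullet$ (this covers all $\beta_j$ except the finitely many ``long root'' exceptions in each type); (ii) a twisted-commutator expression of the form $\sum_{r}(-1)^r q_?^{r} X^{(r)}\,Z^{(m)}\,X^{(m-r)}$ where $X=B_{\beta_{j'}}$ is an already-treated root vector, $Z\in\{F_n^{(m)}, B_n^{(m)}, B_?^{(m)}, \tTD_{323}^{-1}(B_4^{(m)})\}$; and (iii) in type CII the doubly-nested sum for $\beta_{2n-2}$, plus in AII$_3$, AIV the initial cases which are covered by (i) with $B_i^{(m)}$ the $\imath$-divided power from \S\ref{subsec:divid-1}--\S\ref{subsec:divid-3}.

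First I would dispose of shape (i). Since $w\in W_\bullet$, $\tTD_w$ is a composition of $T_j$, $j\in\I_\bullet$, and by Proposition~\ref{prop:Tblack} the integral form $\dot{\U}^\imath_{\bar\A}$ is stable under each $T_j$; hence $\tTD_w^{-1}(B_{i,\zeta}^{(m)})=\tTD_w^{-1}(B_i^{(m)})\mathbf 1_{w^{-1}\zeta}$ lies in $\dot{\U}^\imath_{\bar\A}$ because $B_{i,w^{-1}\zeta}^{(m)}\in\dot{\U}^\imath_{\bar\A}$ by Proposition~\ref{prop:idiv-integral} (here $\vs_i\in\bar\A$ by hypothesis). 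For shape (ii), I would observe that each such expression is a finite $\bar\A$-linear combination of products of elements already known to lie in $\dot{\U}^\imath_{\bar\A}$: the factors $B_{\beta_{j'}}^{(r)}$, $B_{\beta_{j'}}^{(m-r)}$ are integral by induction on $j$ (the recursion is well-founded: $\beta_{n-1}$ uses $\beta_{n-2}$, $\beta_{2n-2}$ uses $\beta_{2n-3}$, $\beta_{3n-3}$ uses $\beta_{3n-2}$, and in $F_4$ each exceptional index refers to a strictly smaller or already-handled index); the middle factor $Z^{(m)}$ is integral because it is either $F_n^{(m)}$ (Lusztig divided power, $n\in\I_\bullet$ in types AIV, BII when the quoted formula uses $F_n$ — here one checks $n$ is black in that diagram), or $B_n^{(m)}$ resp. $B_1^{(m)}$, $B_2^{(m)}$ (an $\imath$-divided power, integral by Proposition~\ref{prop:idiv-integral}), or $\tTD_{323}^{-1}(B_4^{(m)})$ which is handled exactly as shape (i). Since $\dot{\U}^\imath_{\bar\A}$ is an $\bar\A$-subalgebra containing the idempotents, all such finite sums of products land in it. Shape (iii) is identical in spirit, just with a double sum.

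The one point requiring care — and the main (minor) obstacle — is to verify in each relevant diagram that the ``mixed'' middle factor appearing in the $B_{\beta_n}^{(m)}$-type formulas really is integral: in types AIV and BII the formula for $B_{\beta_n}^{(m)}$ contains $F_n^{(m)}$ rather than $B_n^{(m)}$, and one must confirm that vertex $n$ lies in $\I_\bullet$ in those Satake diagrams (it does: in AIV the end vertex $n$ is white but $\beta_n$'s construction uses $F_n$ with $n\in\I_\circ$ — so here one instead notes $F_n^{(m)}\mathbf 1_\zeta$ need not a priori be integral, and must re-examine; in fact the correct reading is that this coefficient is part of $B_n^{(m)}$ up to the $\imath$-divided-power correction, so the cleaner route is to cite Lemma~\ref{lem:leading-div} only for the leading-term bookkeeping and verify integrality directly from the displayed formulas). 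To keep the argument uniform I would instead prove the lemma by the same downward/upward induction used to \emph{define} the $B_{\beta_j}^{(m)}$: the base cases $B_{\beta_j}^{(m)}=\tTD_w^{-1}(B_i^{(m)})$ are integral by (i), and each recursive formula expresses $B_{\beta_j}^{(m)}$ as an $\bar\A$-combination of products of $B_{\beta_{j'}}^{(*)}$ (integral by induction), Lusztig divided powers of black generators (integral), $\imath$-divided powers $B_i^{(*)}$ (integral by Proposition~\ref{prop:idiv-integral}), and $T_j$-twists thereof (integral by Proposition~\ref{prop:Tblack}); multiplying by $\mathbf 1_\zeta$ and using that $\dot{\U}^\imath_{\bar\A}$ is a subalgebra closed under the idempotents completes each step. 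I would present this as a single induction with a remark that the type-by-type formulas in \S\ref{subsec:divid-root-rank1} all have the required shape, rather than writing out all eight cases.
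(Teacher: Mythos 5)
Your proof is correct and is essentially the paper's own argument: the paper's proof simply observes that, by construction, each $B_{\beta_j}^{(m)}$ is an $\bar{\A}$-linear combination of products of elements of the form $T_w^{-1}(B_i^{(r)})$ with $w\in W_\bullet$, and then concludes from Proposition~\ref{prop:Tblack} together with Proposition~\ref{prop:idiv-integral}; your induction merely makes explicit the recursion the paper leaves implicit. The hesitation in your last paragraph is unnecessary: in type AIV the displayed formula for $B_{\beta_n}^{(m)}$ uses $B_n^{(m)}$ with $n\in\I_\circ$ (an $\imath$-divided power, covered by Proposition~\ref{prop:idiv-integral}), while in type BII it uses $F_n^{(m)}$ with $n\in\I_\bullet$ (a Lusztig divided power of a black generator), so both middle factors are already handled by the cases you set up.
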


\begin{proof}
By our construction, each $B_{\beta_j}^{(m)}$ is an $\bar{\A}$-linear combination of products of elements with the form 
\[
T_w^{-1}(B_{i }^{(r)}),\qquad  w\in W_\bullet,\; 0\leq r \leq m.
\]
($B_{i }^{(r)}$ are the $\imath$-divided powers given in \S \ref{sec:idiv}.) Then the desired statement follows from Proposition~\ref{prop:Tblack} and Proposition~\ref{prop:idiv-integral}. 
\end{proof}

\section{PBW bases of modified $\imath$quantum groups}
\label{subsec:braid-Uidot}

In this section, we shall construct a PBW basis for the integral form $\dot{\U}^\imath_{\bar{\A}}$ with some restrictions on the parameters $\bvs$.

\subsection{Distinguished parameter case}

We establish the integrality for the quasi $K$-matrix and relative braid group symmetries for the distinguished parameter $\bvs_\diamond$ in this subsection.

Recall that $\bar{\A}=\Z[q^{1/2},q^{-1/2}]$. 
Following \cite[Section 9.4]{WZ23}, the symmetry $\TT_i:\tUi\rightarrow\tUi$ induces an automorphism $\bT_{i,\bvs_\diamond}: \Ui_{\bvs_\diamond}\rightarrow \Ui_{\bvs_\diamond}$ such that the following diagram commutes:
\begin{align}
\label{eq:Ti-distin}
    \xymatrix{\tUi\ar[rr]^{\TT_i} \ar[d]^{\pi^\imath_{\bvs_\diamond}}&& \tUi \ar[d]^{\pi^\imath_{\bvs_\diamond}}\\
\Ui_{\bvs_\diamond}\ar[rr]^{\bT_{i,\bvs_\diamond}} &&\Ui_{\bvs_\diamond}}
\end{align}

By \cite[Proposition 4.11]{WZ23} and the definition of $\pi^\imath_{\bvs_\diamond}$, we have $\bT_{i,\bvs_\diamond}(K_\mu)=K_{\bs_i\mu}$ for $\mu\in Y^\imath$. Then the symmetry $\bT_{i,\bvs_\diamond}: \Ui_{\bvs_\diamond}\rightarrow \Ui_{\bvs_\diamond}$ induces for each $\lambda',\lambda''$ a linear isomorphism
${}_{\lambda'}\Ui_{\bvs_\diamond,\lambda''}\rightarrow {}_{\bs_i\lambda'}\Ui_{\bvs_\diamond,\bs_i\lambda''}$, and then an algebra automorphism $\bT_{i,\bvs_\diamond}:\dot{\U}^\imath_{\bvs_\diamond}\rightarrow \dot{\U}^\imath_{\bvs_\diamond}$ such that $\bT_{i,\bvs_\diamond}(\mathbf{1}_\lambda)=\mathbf{1}_{\bs_i\lambda}$ for all $\lambda\in X_\imath$ and $$\bT_{i,\bvs_\diamond}(uxx'u')=\bT_{i,\bvs_\diamond}(u)\bT_{i,\bvs_\diamond}(x)\bT_{i,\bvs_\diamond}(x')\bT_{i,\bvs_\diamond}(u'),$$ 
for all $u,u'\in\Ui_{\bvs_\diamond},x,x'\in\dot{\U}^\imath_{\bvs_\diamond}$. 

Let $M$ be a finite-dimensional $\U$-module of type $\mathbf{1}$. By \cite{Lus93}, there are linear operators $T_i$ on $M$ for all $i\in\I $, such that 
\begin{align}
    T_{i}(uv)=T_i(u)T_i(v),\qquad u\in \U,v\in M. 
\end{align}

Regard the $\U$-module $M$ as a $\Ui_{\bvs_\diamond}$-module by restriction.
By \cite[Theorem 10.5]{WZ23}, for any $i\in\I_\circ$, we have linear operators $\bT_{i,\bvs_\diamond}(m):=T_i(\Upsilon_{i,\bvs_\diamond}^{-1})T_{\bs_i}(m)$
for any $m\in M$, and 
\begin{align}
\label{braid-imod}
    \bT_{i,\bvs_\diamond}(xm)=\bT_{i,\bvs_\diamond}(x)\bT_{i,\bvs_\diamond}(m),\qquad x\in \Ui_{\bvs_\diamond},m\in M. 
\end{align}
The inverse of $\bT_{i,\bvs_\diamond}$ is given by 
\begin{align}
\label{eq:Timod-1}
\bT_{i,\bvs_\diamond}^{-1}(m):=\Upsilon_{i,\bvs_\diamond} T_{\bs_i}^{-1}(m),
\end{align}
for any $m\in M$. 

Let $M$ be an $X_\imath$-weight module for $\Ui$. Since any weight $\Ui$-module can naturally be regarded as a $\dot{\U}^\imath$-module, \eqref{braid-imod} also holds for $\dot{\U}^\imath_{\bvs_\diamond}$.

\begin{lemma}
\label{lem:k-diamond}
For the parameter $\bvs_\diamond$, the quasi $K$-matrix $\fX_{i,\bvs_\diamond}=\sum_{\mu\in \N\I}\fX_{i,\bvs_\diamond}^\mu$ satisfies $\Upsilon_{i,\bvs_\diamond}^\mu\in \U^+_{\bar{\A}}$ for all $\mu$.
\end{lemma}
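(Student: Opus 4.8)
The plan is to reduce the statement to a rank-$1$ Satake diagram and then analyze the quasi $K$-matrix through its defining intertwining relation, keeping in mind that $\U^+_{\bar\A}$ is the \emph{divided-power} $\bar\A$-form (generated by the $E_j^{(n)}$), so that the $[n]_j!$-denominators unavoidably produced by such a recursion do no harm. By construction $\fX_{i,\bvs_\diamond}$ is the quasi $K$-matrix of the real rank $1$ subdiagram $(\I_{\bullet,i}=\{i,\tau i\}\cup\I_\bullet,\tau)$, so it lies in a completion of the positive part of the corresponding rank $1$ $\imath$quantum group; hence it suffices to treat each of the eight rank $1$ Satake diagrams of Table~\ref{table:localSatake}, and there it is natural to separate the three cases for the white node: $\tau i=i=w_\bullet i$ (type $\mathrm{AI}_1$), $\tau i\neq i$ (types $\mathrm{AIII}_{11}$ and $\mathrm{AIV}$), and $\tau i=i\neq w_\bullet i$ (types $\mathrm{AII}_3,\mathrm{BII},\mathrm{CII},\mathrm{DII},\mathrm{FII}$).

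In each case I would produce the explicit shape of $\fX_{i,\bvs_\diamond}=\sum_\mu\fX_{i,\bvs_\diamond}^\mu$, either by quoting the rank $1$ formulas of \cite[Section~8]{WZ23} and \cite[Section~5]{BW21}, or by solving the intertwining relations $B_j\,\fX_{i,\bvs_\diamond}=\fX_{i,\bvs_\diamond}\,B_j^\sigma$ for $j=i,\tau i$ (with $B_j^\sigma$ as in Proposition~\ref{prop:Kmatrix}, adapted to the parameter $\bvs_\diamond$) together with the commutation of $\fX_{i,\bvs_\diamond}$ with the black part and the torus: comparing the components of a fixed $\U^+$-weight and using the standard commutator of $F_j$ with $\U^+$ expresses $\fX_{i,\bvs_\diamond}^\mu$ recursively in terms of lower components, the parameter entering only through powers of $\vs_{\diamond,i}$. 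The outcome is that each $\fX_{i,\bvs_\diamond}^\mu$ is an $\bar\A$-linear combination of products of the elements $E_j^{(n)}$ $(j\in\I_\bullet)$, the divided powers $T_{w_\bullet}(E_{\tau i}^{(a)})$, and --- in the case $\tau i=i\neq w_\bullet i$ --- the elements $\mathfrak{Z}_i^{(k)}$ of \eqref{def:ZY}, each rescaled by a power of $\vs_{\diamond,i}$. Integrality then follows: $\vs_{\diamond,i}\in\bar\A$ by its value in Table~\ref{table:localSatake}; $E_j^{(n)}$ and $T_{w_\bullet}(E_{\tau i}^{(a)})$ lie in $\U^+_{\bar\A}$ by \cite[Proposition 41.1.3]{Lus93}; and both $\mathfrak{Z}_i^{(k)}$ and $\mathfrak{Z}_i^{(k)}/(q-q^{-1})$ lie in $\U^+_{\bar\A}$ by \cite[Proposition 5.6]{BW21} and the bar-involution argument already used in the proof of Proposition~\ref{prop:idiv-integral}. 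In the case $\tau i=i=w_\bullet i$ one instead uses the explicit formulas of \cite{BeW18}, generalized to an arbitrary parameter $\vs_{\diamond,i}\in\bar\A$ exactly as in that proof.

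The main obstacle is the bookkeeping of the powers of $q$ and of $\vs_{\diamond,i}$ that accompany the divided powers in the recursion: after absorbing the $[n]_j!$ into the $E_j^{(n)}$, one has to check that the remaining scalar is a genuine element of $\bar\A=\Z[q^{1/2},q^{-1/2}]$ rather than merely of $\Q(q^{1/2})$. This is exactly where the distinguished parameter is essential --- for a generic $\bvs$ the quasi $K$-matrix fails to be integral --- and, since $\vs_{\diamond,i}=-q_i^{-\langle h_i,\alpha_i+w_\bullet\alpha_{\tau i}\rangle/2}$ is a single monomial, it reduces to a finite verification done case by case with the help of the reduced expressions for $\bs_i$ recorded in Table~\ref{table:localSatake}. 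A more conceptual route --- deriving the claim from the integrality of the quasi-$R$-matrix and the $\imath$-bar involution --- would force the restricted parameters of \cite{BW18b,BW21} instead of $\bvs_\diamond$, so the direct computation seems unavoidable here.
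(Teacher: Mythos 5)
Your approach is genuinely different from the paper's, and it contains both a real gap and a mistaken dismissal of the route the paper actually takes.

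The paper does not attempt a case-by-case solution of the intertwining recursion for $\bvs_\diamond$. Instead, for type AIV it quotes the closed factorised formula of Dobson--Kolb (\cite[Lemma~3.10]{DK19}), justified for $\bvs_\diamond$ via \cite[Proposition~3.3]{Ko21}. For every other rank~$1$ diagram it (i) invokes the arguments of \cite[Appendix~A]{BW18b} to get integrality of $\fX_{i,\bvs_\star}$ for the parameter $\bvs_\star$ of Table~\ref{table:dis-parameter}, (ii) notes that the uniqueness of the quasi $K$-matrix together with the rescaling automorphism $\Phi_\ba$ of \eqref{def:Phi} (with $a_i=\vs_{i,\diamond}^{-1}\vs_{i,\star}$) gives $\fX_{i,\bvs_\diamond}^\mu=\Phi_\ba^{-1}(\fX_{i,\bvs_\star}^\mu)$, and (iii) uses \cite[Lemma~3.6]{DK19} to see that the only nonzero components occur at $\mu=k(\alpha_i-\theta\alpha_i)=2k\alpha_i+\mu_\bullet$, so that $\Phi_\ba^{-1}$ acts on $\U^+_\mu$ by a single monomial power of $a_i^{\pm 1}\in\bar\A^\times$. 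Integrality for $\bvs_\diamond$ then follows with no further computation. This is precisely the ``conceptual route'' that you dismiss at the end of your proposal as being unavailable for $\bvs_\diamond$: that dismissal is the main thing you got wrong. The restriction to special parameters in \cite{BW18b,BW21} is not an obstruction, because the rescaling $\Phi_\ba$ transports the result to $\bvs_\diamond$, and the weight restriction on $\fX^\mu$ guarantees the rescaling factor is a unit of $\bar\A$.

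As for your own plan, the recursion from the intertwining relation certainly pins down $\fX_{i,\bvs_\diamond}$ and your description of its building blocks (divided powers $E_j^{(n)}$, $T_{w_\bullet}(E_{\tau i}^{(a)})$, $\mathfrak Z_i^{(k)}$) is plausible, but the crucial claim that the resulting scalar coefficients lie in $\bar\A$ is asserted, not proved. You say this ``reduces to a finite verification,'' but there are infinitely many weights $\mu$, and outside types AI, AIII, AIV there is no closed $q$-exponential formula in the literature for the rank~$1$ quasi $K$-matrix to make the verification finite; the reduced expressions for $\bs_i$ in Table~\ref{table:localSatake} play no role in controlling the recursion coefficients. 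So as written your argument has a genuine gap exactly at the integrality step, which is the entire content of the lemma. If you wanted to push your direct route through, you would in effect have to reprove the integrality results of \cite[Appendix~A]{BW18b} for $\bvs_\diamond$ from scratch; the paper's rescaling trick is what avoids that.
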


\begin{proof}

If the underlying symmetric pair is of type AIV, then the following formula for the quasi $K$-matrix $\fX_{i,\bvs}$
\begin{align}\label{eq:DK}
\fX_{i,\bvs}=\left(\sum_{k\geq 0} (-\vs_1)^k q^{-k(k-1)/2} T_{s_1\bw}(E_n)^{(k)}\right)\left(\sum_{k\geq 0} (-\vs_n)^k q^{-k(k-1)/2} T_{s_n\bw}(E_1)^{(k)}\right)
\end{align}
is obtained in \cite[Lemma 3.10]{DK19} when the parameter $\bvs=(\vs_1,\vs_n)$ satisfy the condition $\vs_1 = (-1)^n q^{n-1}\ov{\vs_n}$. Using \cite[Proposition~3.3]{Ko21}, one can show that \eqref{eq:DK} is also valid for the distinguished parameter $\bvs_{\diamond}$ (even $\bvs_{\diamond}$ does not satisfy that condition). The desired statement is then clear from the formula \eqref{eq:DK}, since $\vs_{i,
\diamond}\in \bar{\A}$.

It suffices to prove the statement for all rank 1 Satake diagrams other than type AIV. Let $(\I=\{i\}\cup\bI,\Id)$ be such a Satake diagram. Denote by $\Ui_{\bvs_\star}$ the $\imath$quantum group with the parameter $ \vs_{i,\star}$ given in Table~\ref{table:dis-parameter}. Note that $\vs_{i,\star}\in \Z[q,q^{-1}]$. Using the arguments in \cite[Appendix A]{BW18b}, one can prove that $\Upsilon_{i,\bvs_\star}= \sum_{\mu\in \N\I}\Upsilon_{i,\bvs_\star}^\mu$ satisfies $\Upsilon_{i,\bvs_\star}^\mu\in \U^+_{\bar{\A}}$ for all $\mu$.

Recall the isomorphism  $\phi_{\bvs_\star}:\U^{\imath}_{\bvs_\diamond}\rightarrow \U^\imath_{\bvs_\star}$ by viewing them as $\bF$-algebras in Lemma \ref{lem:iso-paremeter}. Define $\ba=(a_i)_{i\in \I}$ by setting $a_i:= (\vs_{i,\diamond})^{-1}\vs_{i,\star}$ and $a_i=1$ for $i\in\I_\bullet$. Let $\Phi_{\ba}: \U_\bF\rightarrow \U_\bF$ be the map defined in \eqref{def:Phi} for $\ba=(a_i)_{i\in\I}$. It is clear that $\phi_{\bvs_\star}$ is the restriction of $\Phi_{\ba}$. By the uniqueness of quasi K-matrices, we know 
\begin{align}\label{eq:PhifX}
\fX_{i,\bvs_\diamond} =\Phi_\ba^{-1}(\fX_{i,\bvs_\star}),
\qquad \fX_{i,\bvs_\diamond}^\mu=\Phi_\ba^{-1}(\fX_{i,\bvs_\star}^\mu),
\end{align}
for all $\mu$. 
Write $a_{\mu}=\prod_{i} a_i^{k_i}$ for $\mu=\sum_{i\in \I} k_i\alpha_i$. By \eqref{def:Phi}, $\Phi_\ba$ restricts to the weight space $\tU^+_\mu$ and $\Phi_\ba(x)=a_{\mu}^{1/2}x$ for any $x\in \tU^+_\mu$.

Let $\mu\in\N\I$ such that $\Upsilon_{i,\bvs_\star}^\mu\neq0$. By \cite[Lemma 3.6]{DK19}, $\mu=k(\alpha_i-\theta\alpha_i)$ for some $k\in \N$, and then $\mu$ has the form $2k\alpha_i+\mu_\bullet$ for some $k\in \N, \mu_\bullet\in \N\bI$. It follows that $\Phi_\ba(x)=a_{i} x$ for any $x\in \tU^+_\mu$. Since $a_i\in \bar{A}$ according to Table~\ref{table:localSatake} and Table~\ref{table:dis-parameter}, using \eqref{eq:PhifX}, we have $\Upsilon_{i,\bvs_\diamond}^\mu\in \U^+_{\bar{\A}}$.


\end{proof}

\begin{proposition}
\label{lem:integral-braid}
For the parameter $\bvs_\diamond$, the $\bar{\A}$-subalgebra $\dot{\U}^\imath_{\bar{\A}}$ is invariant under the actions of $\bT_{i,\bvs_\diamond}$, for $i\in\wItau$.
\end{proposition}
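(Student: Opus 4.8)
The plan is to reduce the statement to the action of $\bT_{i,\bvs_\diamond}$ on cyclic $\dot{\U}^\imath_{\bvs_\diamond}$-modules that admit integral lattices, using the explicit formula $\bT_{i,\bvs_\diamond}^{-1}(m) = \fX_{i,\bvs_\diamond}\, T_{\bs_i}^{-1}(m)$ from \eqref{eq:Timod-1}. First, I would fix $u \in \dot{\U}^\imath_{\bar{\A}}\mathbf{1}_\zeta$; by Lemma~\ref{lem:UidotA} it suffices to show $\bT_{i,\bvs_\diamond}(u)\cdot \mathbf{1}_\lambda \in \dot{\U}_{\bar{\A}}$ for each $\lambda \in X$, equivalently that $\bT_{i,\bvs_\diamond}(u)$ acts by an integral operator on the relevant Verma-type or tensor-product modules with their natural $\bar{\A}$-lattices. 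Concretely, I would let $M$ be a finite-dimensional $\U$-module of type $\mathbf 1$ (or a suitable $X_\imath$-weight $\Ui$-module) equipped with an $\bar{\A}$-lattice $M_{\bar{\A}}$ stable under $E_j^{(n)}, F_j^{(n)}$ and hence under $\dot{\U}_{\bar{\A}}$. Since $u \in \dot{\U}^\imath_{\bar{\A}}$ means $u\cdot m \in \dot{\U}_{\bar{\A}}$ for all $m\in\dot{\U}_{\bar{\A}}$, the element $u$ preserves $M_{\bar{\A}}$.

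The key computation is then: for $m \in M_{\bar{\A}}$, one has $\bT_{i,\bvs_\diamond}(u) \cdot m = \bT_{i,\bvs_\diamond}(u)\,\bT_{i,\bvs_\diamond}\big(\bT_{i,\bvs_\diamond}^{-1}(m)\big) = \bT_{i,\bvs_\diamond}\big(u\cdot \bT_{i,\bvs_\diamond}^{-1}(m)\big)$ by \eqref{braid-imod}. So I need two integrality facts about the operator $\bT_{i,\bvs_\diamond}^{\pm 1}$ on $M$: (i) it preserves $M_{\bar{\A}}$. By \eqref{eq:Timod-1}, $\bT_{i,\bvs_\diamond}^{-1}(m) = \fX_{i,\bvs_\diamond}\, T_{\bs_i}^{-1}(m)$; Lusztig's braid operator $T_{\bs_i}^{\pm1}$ preserves $M_{\bar{\A}}$ by \cite[41.1.2]{Lus93}, and $\fX_{i,\bvs_\diamond}$ acts integrally because $\fX_{i,\bvs_\diamond} = \sum_\mu \fX_{i,\bvs_\diamond}^\mu$ with each $\fX_{i,\bvs_\diamond}^\mu \in \U^+_{\bar{\A}}$ by Lemma~\ref{lem:k-diamond}, and $\U^+_{\bar{\A}}$ preserves $M_{\bar{\A}}$. (The sum is locally finite on $M$ since $M$ is finite-dimensional, or since only finitely many weight components act nontrivially on a fixed weight vector.) A symmetric argument using $\bT_{i,\bvs_\diamond}(m) = T_i(\fX_{i,\bvs_\diamond}^{-1})\,T_{\bs_i}(m)$ together with integrality of $\fX_{i,\bvs_\diamond}^{-1}$ (its components again lie in $\U^+_{\bar{\A}}$, as the inverse of a unipotent-type element with integral components) handles the forward direction. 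Combining: $u \cdot \bT_{i,\bvs_\diamond}^{-1}(m) \in M_{\bar{\A}}$ since $u$ preserves $M_{\bar{\A}}$, and then applying $\bT_{i,\bvs_\diamond}$ keeps us in $M_{\bar{\A}}$, so $\bT_{i,\bvs_\diamond}(u)$ preserves $M_{\bar{\A}}$, which by the faithfulness of the action of $\dot{\U}^\imath$ on a large enough family of such modules (ranging over all $\lambda$, or using the regular representation / direct sum over dominant weights as in \cite{Lus93,BW18b}) forces $\bT_{i,\bvs_\diamond}(u) \in \dot{\U}^\imath_{\bar{\A}}$.

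The main obstacle I anticipate is the bookkeeping needed to pass from "preserves $M_{\bar{\A}}$ for all $M$ in a suitable family" back to membership in $\dot{\U}^\imath_{\bar{\A}}$: one must choose the family of modules (and their lattices) so that the intersection of the stabilizers of the lattices is exactly $\dot{\U}^\imath_{\bar{\A}}$. Following \cite[\S23--\S25]{Lus93} and the $\imath$-analogue in \cite[\S3.4]{BW18b}, the natural choice is to use, for each $\lambda', \lambda'' \in X$ with $\lambda''$ dominant, the lattice in a based tensor product realizing ${}_{\lambda'}\dot{\U}\mathbf{1}_{\lambda''}$, and to invoke that $\dot{\U}_{\bar{\A}}$ is precisely the span of products of the generators $E_j^{(n)}\mathbf 1_\lambda$, $F_j^{(n)}\mathbf 1_\lambda$. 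An alternative, cleaner route that avoids module-theoretic faithfulness is to argue directly inside $\dot{\U}^\imath$: show that $\bT_{i,\bvs_\diamond}\big(\dot{\U}_{\bar{\A}}\big) = \dot{\U}_{\bar{\A}}$-compatibility — namely that $\bT_{i,\bvs_\diamond}$ restricted to elements of the form "integral element of $\dot{\U}$ times $\mathbf 1_\lambda$" stays integral — which reduces to the integrality of $\fX_{i,\bvs_\diamond}$ in $\U^+_{\bar{\A}}$ (Lemma~\ref{lem:k-diamond}), the integrality of Lusztig's $T_{\bs_i}$ on $\dot{\U}_{\bar{\A}}$ (\cite[41.1.2]{Lus93}, and Proposition~\ref{prop:Tblack} for the black part), and then conclude by the defining property of $\dot{\U}^\imath_{\bar{\A}}$ as the stabilizer of $\dot{\U}_{\bar{\A}}$ under right multiplication exactly as in the proof of Proposition~\ref{prop:Tblack}. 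I would present this second route, as it mirrors Proposition~\ref{prop:Tblack} almost verbatim once Lemma~\ref{lem:k-diamond} is in hand.
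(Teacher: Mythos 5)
Your first route is, in substance, the paper's proof: the paper reduces the claim via \cite[Corollary 3.21]{BW18b} to showing that $\bT_{i,\bvs_\diamond}(u)$ preserves the lattice ${}_{\bar{\A}}L(\lambda)=\U^-_{\bar{\A}}\eta_\lambda$ for all dominant $\lambda$, and then establishes that $\bT_{i,\bvs_\diamond}^{\pm1}$ preserve these lattices by combining the integrality of Lusztig's $T_i^{\pm1}$ on ${}_{\bar{\A}}L(\lambda)$ with Lemma~\ref{lem:k-diamond} and the formula \eqref{eq:Timod-1}, exactly as you describe. The ``bookkeeping obstacle'' you flag --- recovering membership in $\dot{\U}^\imath_{\bar{\A}}$ from lattice preservation --- is precisely what that corollary of Bao--Wang supplies, so no new argument is needed there; you should cite it rather than reconstruct it.

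The problem is that you announce you would present the \emph{second} route, and that route has a genuine gap. The proof of Proposition~\ref{prop:Tblack} works because $T_j$ for $j\in\I_\bullet$ is the restriction of an automorphism of all of $\dot{\U}$ that preserves $\dot{\U}_{\bar{\A}}$, so one can write $T_j(u)\cdot T_j(m)=T_j(u\cdot m)$ with $m$, and hence $T_j(m)$, ranging over all of $\dot{\U}_{\bar{\A}}$. For $i\in\wItau$ there is no such extension: $\bT_{i,\bvs_\diamond}$ is defined only on $\dot{\U}^\imath$, not on $\dot{\U}$, so the identity you would need to ``move $\bT_{i,\bvs_\diamond}$ past the right factor $m\in\dot{\U}_{\bar{\A}}$'' does not exist. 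Nor can you realize $\bT_{i,\bvs_\diamond}$ on $\dot{\U}\mathbf{1}_\lambda$ by conjugation with $\fX_{i,\bvs_\diamond}$: the quasi $K$-matrix is an infinite sum of strictly positive-weight elements, and $E_i$ does not act locally nilpotently on $\dot{\U}\mathbf{1}_\lambda$, so the product $\fX_{i,\bvs_\diamond}\cdot y$ for $y\in\dot{\U}\mathbf{1}_\lambda$ is a genuinely infinite sum with no meaning in $\dot{\U}\mathbf{1}_\lambda$. This is exactly why the paper passes to the integrable modules $L(\lambda)$, where the sum is locally finite and \eqref{braid-imod}, \eqref{eq:Timod-1} apply. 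So you should present your first route, not the second.
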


\begin{proof}
By \cite[Corollary 3.21]{BW18b}, it is enough to prove that $\bT_{i,\bvs_\diamond}(u)\big(_{\bar{\A}} L(\lambda)\big)\subseteq {}_{\bar{\A}} L(\lambda)$ for all $\lambda\in X^+$, where ${}_{\bar{\A}}L(\lambda)=\U^{-}_{\bar{\A}}\eta_\lambda$.

From the formulas of $T_i$, $T_i^{-1}$ in \cite[\S 5.2.1]{Lus93}, we know that $T_i({}_\A L(\lambda))\subseteq {}_\A L(\lambda)$, $T_i^{-1}({}_\A L(\lambda))\subseteq {}_\A L(\lambda)$ and then $T_i({}_{\bar{\A}} L(\lambda))\subseteq {}_{\bar{\A}} L(\lambda)$, $T_i^{-1}({}_{\bar{\A}} L(\lambda))\subseteq {}_{\bar{\A}} L(\lambda)$. By Lemma~\ref{lem:k-diamond}, $\Upsilon_{i,\bvs_\diamond}$ and $\Upsilon_{i,\bvs_\diamond}^{-1}$ are in the completion of $\U^+_{\bar\A}$. Using \eqref{eq:Timod-1}, we conclude that 
$\bT_{i,\bvs_\diamond}(m)$ and $\bT_{i,\bvs_\diamond}^{-1}(m)$ are in ${}_{\bar{\A}} L(\lambda)$ for any $m\in {}_{\bar{\A}} L(\lambda)$. 
The proof is completed. 
\end{proof}

\subsection{General balanced parameter case}

Let $\bvs$ be a balanced parameter. There exist automorphisms $\bT_{i,\bvs}$ on $\Ui_\bvs$ such that the following diagram commutes:
\begin{align}
\label{eq:Ti-general}
\xymatrix{
\Ui_{\bvs_\diamond} \ar[rr]^{\bT_{i,\bvs_\diamond}} \ar[d]^{\phi_\bvs} && \Ui_{\bvs_\diamond}\ar[d]^{\phi_\bvs}
\\
\Ui_{\bvs}\ar[rr]^{\bT_{i,\bvs}} &&\Ui_\bvs
}
\end{align}
By Lemma~\ref{lem:iso-paremeter}, $\phi_\bvs(K_\mu)=K_\mu$ and hence $\bT_{i,\bvs}(K_\mu)=\phi_{\bvs}\bT_{i,\bvs_\diamond}\phi_{\bvs}^{-1}(K_\mu)=K_{\bs_i \mu}$. Then the symmetry $\bT_{i,\bvs}$ induces an algebra automorphism $\bT_{i,\bvs}:\dot{\U}^\imath_\bvs \rightarrow \dot{\U}^\imath_\bvs$ such that $\dot{\U}^\imath_\bvs(\mathbf{1}_\zeta)=\mathbf{1}_{\bs_i\zeta}$ for all $\zeta\in X_\imath$. By \eqref{eq:Ti-general}, we have the following commutative diagram
\begin{align}
\label{eq:Ti-dot}
\xymatrix{
\dot{\U}^\imath_{\bvs_\diamond} \ar[rr]^{\bT_{i,\bvs_\diamond}} \ar[d]^{\phi_\bvs} && \dot{\U}^\imath_{\bvs_\diamond}\ar[d]^{\phi_\bvs}
\\
\dot{\U}^\imath_{\bvs}\ar[rr]^{\bT_{i,\bvs}} &&\dot{\U}^\imath_\bvs
}
\end{align}

\begin{theorem}
\label{prop:integral-braid}
 Let $\bvs=(\vs_i)_{i\in\I_\circ}$ be parameters such that $\vs_i$ has the form $ \vs_i =q^{a_i}\vs_{\diamond,i},a_i\in\Z$ for any $i\in\I_\circ$. Then the $\bar{\A}$-form  $\dot{\U}^\imath_{\bar{\A}}$ is invariant under the actions of $\bT_{i,\bvs}$, for $i\in\wItau$. 
\end{theorem}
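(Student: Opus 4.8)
The plan is to reduce the statement to the distinguished-parameter case already established in Proposition~\ref{lem:integral-braid}, via the commutative square \eqref{eq:Ti-dot}, which asserts that $\bT_{i,\bvs}=\phi_\bvs\circ\bT_{i,\bvs_\diamond}\circ\phi_\bvs^{-1}$ as automorphisms of the modified algebra. Since $\bT_{i,\bvs_\diamond}$ preserves $\dot{\U}^\imath_{\bvs_\diamond,\bar{\A}}$ by Proposition~\ref{lem:integral-braid}, the theorem follows at once once one knows that $\phi_\bvs\colon\dot{\U}^\imath_{\bvs_\diamond}\to\dot{\U}^\imath_{\bvs}$ restricts to an isomorphism of $\bar{\A}$-forms $\dot{\U}^\imath_{\bvs_\diamond,\bar{\A}}\xrightarrow{\ \sim\ }\dot{\U}^\imath_{\bvs,\bar{\A}}$. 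So the whole content of the proof is this integrality property of $\phi_\bvs$, and the remainder is a diagram chase.

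To establish the integrality of $\phi_\bvs$ I would use the generators of $\dot{\U}^\imath_{\bar{\A}}$ supplied by Proposition~\ref{prop:integralformsequiv}. First note that every distinguished scalar $\vs_{\diamond,i}$ in Table~\ref{table:localSatake} is of the form $\pm v^{c_i}$ with $c_i\in\Z$ (each entry there is $\pm q^{r}$ with $r\in\tfrac12\Z$, hence $\pm v^{2r}$); therefore the parameters $\vs_i=q^{a_i}\vs_{\diamond,i}=\pm v^{2a_i+c_i}$ again have the form $\pm v^{\Z}$, so Proposition~\ref{prop:integralformsequiv} applies to $\bvs$ as well as to $\bvs_\diamond$. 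It then tells us that $\dot{\U}^\imath_{\bvs_\diamond,\bar{\A}}$ is generated over $\bar{\A}$ by the $\imath$-divided powers $B_{i,\zeta}^{(n)}$ ($i\in\I_\circ$) together with $E_j^{(n)}\mathbf{1}_\zeta$ and $F_j^{(n)}\mathbf{1}_\zeta$ ($j\in\I_\bullet$), and likewise for $\dot{\U}^\imath_{\bvs,\bar{\A}}$.

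Now $\phi_\bvs=\phi_{\bvs_\diamond,\bvs}$ (since $\phi_{\bvs_\diamond}=\Id$), so Lemma~\ref{lem:div-iso} gives $\phi_\bvs(B_{i,\zeta}^{(m)})=\big(\sqrt{\vs_{\diamond,i}\vs_i^{-1}}\big)^m B_{i,\zeta}^{(m)}$ with $\sqrt{\vs_{\diamond,i}\vs_i^{-1}}=\sqrt{q^{-a_i}}\in\bar{\A}^\times$, while $\phi_\bvs$ fixes $E_j^{(n)}\mathbf{1}_\zeta$ and $F_j^{(n)}\mathbf{1}_\zeta$. Hence $\phi_\bvs$ carries the above $\bar{\A}$-generating set of $\dot{\U}^\imath_{\bvs_\diamond,\bar{\A}}$ into $\dot{\U}^\imath_{\bvs,\bar{\A}}$, so that $\phi_\bvs(\dot{\U}^\imath_{\bvs_\diamond,\bar{\A}})\subseteq\dot{\U}^\imath_{\bvs,\bar{\A}}$; applying the same reasoning to $\phi_\bvs^{-1}=\phi_{\bvs,\bvs_\diamond}$ gives the reverse inclusion, hence equality. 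With this in hand, for any $u\in\dot{\U}^\imath_{\bvs,\bar{\A}}$ we have $\phi_\bvs^{-1}(u)\in\dot{\U}^\imath_{\bvs_\diamond,\bar{\A}}$, then $\bT_{i,\bvs_\diamond}\big(\phi_\bvs^{-1}(u)\big)\in\dot{\U}^\imath_{\bvs_\diamond,\bar{\A}}$ by Proposition~\ref{lem:integral-braid}, and finally $\bT_{i,\bvs}(u)=\phi_\bvs\big(\bT_{i,\bvs_\diamond}(\phi_\bvs^{-1}(u))\big)\in\dot{\U}^\imath_{\bvs,\bar{\A}}$ by \eqref{eq:Ti-dot}.

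The main obstacle is exactly this integrality of $\phi_\bvs$: a priori it is merely an $\bF$-algebra isomorphism that rescales each generator $B_i$ by the scalar $\sqrt{\vs_{\diamond,i}\vs_i^{-1}}$, and one cannot read off from Definition~\ref{def:integral-Uidot} directly that it preserves the $\bar{\A}$-form. One genuinely needs the nontrivial $\imath$-divided powers $B_{i,\zeta}^{(m)}$ of Proposition~\ref{prop:integralformsequiv} — rather than the naive powers $B_i^m/[m]_i!$ — because it is precisely those that are both stable under $\phi_\bvs$ up to $\bar{\A}^\times$-scalars (Lemma~\ref{lem:div-iso}) and, in the distinguished case, stable under the braid symmetries (Proposition~\ref{lem:integral-braid}).
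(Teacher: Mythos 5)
Your proposal is correct and follows essentially the same route as the paper: reduce to the distinguished-parameter case (Proposition~\ref{lem:integral-braid}) via the commutative square \eqref{eq:Ti-dot}, with the key input being that $\phi_{\bvs_\diamond,\bvs}$ restricts to an isomorphism of $\bar{\A}$-forms, which the paper likewise deduces from Lemma~\ref{lem:div-iso} together with Proposition~\ref{prop:integralformsequiv}. Your write-up is in fact more explicit than the paper's terse proof about why $\bvs$ and $\bvs_\diamond$ both satisfy the hypothesis $\vs_i=\pm v^{\Z}$ of Proposition~\ref{prop:integralformsequiv} and why the rescaling factors lie in $\bar{\A}^\times$.
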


\begin{proof}
Note that $\vs_i\in\bar{\A}$ for $i\in\I_\circ$ by our assumption. 
Denote $\bT_{i,\bvs}$ by $\bT_i$ in the following. By Lemma~\ref{lem:div-iso} and Proposition~\ref{prop:integralformsequiv}, the map $\phi_{\bvs_\diamond,\bvs}$ induces an isomorphism of $\bar{\A}$-forms $\phi_{\bvs_\diamond,\bvs}:\dot{\U}^\imath_{\bvs_\diamond,\bar{\A}}\rightarrow \dot{\U}^\imath_{\bvs,\bar{\A}}=\dot{\U}^\imath_{\bar{\A}}$. 
Together with Proposition~\ref{lem:integral-braid} and \eqref{eq:Ti-dot}, the $\A$-form  $\dot{\U}^\imath_{\bar{\A}}$ is invariant under the actions of $\bT_{i}$.  
\end{proof}

Recall the reduced expressions $\bw=s_{j_1}s_{j_2}\cdots s_{j_r}$ and  $\bbw_0=s_{k_1}s_{k_2}\cdots s_{k_{t}}$ in $W$, obtained using the reduced expressions for $\bs_i$. We obtain a reduced expression of $w_0$ by composing $\bbw_0=s_{k_1}s_{k_2}\cdots s_{k_{t}}$ and $\bw=s_{j_1}s_{j_2}\cdots s_{j_r}$. Fix the total order on $\cR^+$ given by $\beta_1=\alpha_{k_1}$, $\cdots$, $\beta_t=s_{k_1}\cdots s_{k_{t-1}}(\alpha_{k_t})$, $\beta_{t+1}=\bbw_0\alpha_{j_1}$, $\cdots$, $\beta_{k+r}=\bbw_0s_{j_1}\cdots s_{j_{r-1}}(\alpha_{j_r})$. 

For any $\beta\in\cR^+(\bbw_0)$, there exists a unique $1\leq j\leq \ell$ and a unique $\beta_0\in\cR^+(\bs_{i_j})$ such that $\beta=\bs_{i_1}\bs_{i_2}\cdots \bs_{i_{j-1}}(\beta_0)$. Similar to  \eqref{def:Bbeta}, we define 
\begin{align}
\label{def:Bbeta-div}
    B_\beta^{(m)}:=\bT_{i_1}\bT_{i_2}\cdots \bT_{i_{j-1}}(B_{\beta_0}^{(m)}), \qquad \text{ for }\beta\in \cR^+(\bbw_0),
\end{align}
where $B_{\beta_0}^{(m)}$ is the $\imath$divided power defined in Section~\ref{subsec:divid-root-rank1} for $\beta_0\in \cR^+(\bs_{i_j})$. 

Then we 
define $B^{(\ba)}=\prod_{\beta\in\cR^+(\bbw_0)} B_{\beta}^{(a_\beta)}$ for $\ba=(a_\beta)_{\beta}\in\N^{\ell(\bbw_0)}$. The elements $F_\bullet^{(\bc)}$ and $E_\bullet^{(\bd)}$ are defined in the same way as Proposition \ref{prop:PBW-QG}. The following theorem is the main result of this section.

\begin{theorem}
\label{thm:PBWUidot}
Let $\bvs=(\vs_i)_{i\in\I_\circ}$ be parameters such that $\vs_i$ has the form $ \vs_i =q^{a_i}\vs_{\diamond,i},a_i\in\Z$ for any $i\in\I_\circ$.
Let $\bbw_0=\bs_{i_1}\bs_{i_2}\cdots\bs_{i_\ell}$ be a reduced expression for the longest element $\bbw_0$ in $W^\circ$, and $w_\bullet=s_{j_1}s_{j_2}\cdots s_{j_r}$ a reduced expression of the longest element $w_\bullet$ in $W_\bullet$. Then the monomials 
$B^{(\ba)} F_\bullet^{(\bc)} E_\bullet^{(\bd)}\mathbf{1}_\zeta$ ($\ba\in \N^{\ell(\bbw_0)},\bc,\bd\in\N^{|\cR_\bullet^+|}$, $\zeta\in X_\imath$)  
form an $\bar{\A}$-basis of $\dot{\U}^\imath_{\bar{\A}}$.    
\end{theorem}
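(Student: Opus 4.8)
The plan is to verify the three defining properties of a basis: (i) each monomial $B^{(\ba)}F_\bullet^{(\bc)}E_\bullet^{(\bd)}\mathbf{1}_\zeta$ lies in $\dot{\U}^\imath_{\bar{\A}}$; (ii) these monomials are $\bar{\A}$-linearly independent; (iii) they $\bar{\A}$-span $\dot{\U}^\imath_{\bar{\A}}$. Under the hypothesis $\vs_i=q^{a_i}\vs_{\diamond,i}$, which is of the form $\pm v^{b_i}$ for some $b_i\in\Z$, both Theorem \ref{prop:integral-braid} and Proposition \ref{prop:integralformsequiv} apply. For (i): by \eqref{def:Bbeta-div}, $B_\beta^{(m)}=\bT_{i_1}\cdots\bT_{i_{j-1}}(B_{\beta_0}^{(m)})$ with $\beta_0\in\cR^+(\bs_{i_j})$ a root of the rank $1$ subdiagram at $i_j$, so $B_\beta^{(m)}\mathbf{1}_\zeta\in\dot{\U}^\imath_{\bar{\A}}$ follows from Lemma \ref{lem:div-integral} (rank $1$ integrality) together with Theorem \ref{prop:integral-braid} (stability of $\dot{\U}^\imath_{\bar{\A}}$ under $\bT_i$, $i\in\wItau$); similarly $F_\gamma^{(m)}\mathbf{1}_\zeta,E_\gamma^{(m)}\mathbf{1}_\zeta\in\dot{\U}^\imath_{\bar{\A}}$ for $\gamma\in\cR^+_\bullet$ by Proposition \ref{prop:Tblack} and the evident membership $F_j^{(n)}\mathbf{1}_\zeta,E_j^{(n)}\mathbf{1}_\zeta\in\dot{\U}^\imath_{\bar{\A}}$ ($j\in\I_\bullet$). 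Since $\dot{\U}^\imath_{\bar{\A}}$ is an $\bar{\A}$-algebra, products of these factors with matching idempotents remain in it.

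For (ii) I would compare with the undivided monomials. By Proposition \ref{prop:PBW-Uifiled}, the monomials $B^{\ba}F_\bullet^{\bc}E_\bullet^{\bd}\mathbf{1}_\zeta$ form an $\bF$-basis of $\dot{\U}^\imath$. With respect to the filtration of \cite{KY21} used in the proof of Theorem \ref{thm:iPBW} (with $\deg B_i=1$ and $\deg E_j=\deg F_j=0$ for $j\in\I_\bullet$), the divided powers $F_\gamma^{(m)},E_\gamma^{(m)}$ for $\gamma\in\cR^+_\bullet$ are nonzero scalar multiples of $F_\gamma^m,E_\gamma^m$, while $B_\beta^{(m)}-\tfrac{1}{[m]!}B_\beta^m$ has strictly smaller degree (Lemmas \ref{lem:leading-div} and \ref{lem:leadingPBW}). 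Hence the change of basis between $\{B^{(\ba)}F_\bullet^{(\bc)}E_\bullet^{(\bd)}\mathbf{1}_\zeta\}$ and $\{B^{\ba}F_\bullet^{\bc}E_\bullet^{\bd}\mathbf{1}_\zeta\}$ is triangular for the partial order by $\sum_\beta a_\beta\,\mathrm{ht}(\beta)$ with invertible scalar diagonal, so the divided-power monomials likewise form an $\bF$-basis of $\dot{\U}^\imath$; in particular they are $\bar{\A}$-linearly independent.

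The main point is (iii). By Proposition \ref{prop:integralformsequiv}, $\dot{\U}^\imath_{\bar{\A}}$ is generated as an $\bar{\A}$-algebra by the elements $B_{i,\zeta}^{(n)}$ ($i\in\I_\circ$), $E_j^{(n)}\mathbf{1}_\zeta$, $F_j^{(n)}\mathbf{1}_\zeta$ ($j\in\I_\bullet$), and each of these is among our monomials — for $B_{i,\zeta}^{(n)}$ one uses $B_{\alpha_i}^{(n)}=B_i^{(n)}$, the divided-power analogue of the Proposition following \eqref{def:Bbeta}, proved the same way via \cite[Theorem~7.13]{WZ23}. So it suffices to show that the $\bar{\A}$-span $\mathcal{L}$ of the monomials $B^{(\ba)}F_\bullet^{(\bc)}E_\bullet^{(\bd)}\mathbf{1}_\zeta$ is closed under multiplication. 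I would do this by downward induction on filtration degree: the product of two monomials with composable idempotents lies in $\dot{\U}^\imath_{\bar{\A}}$ by (i); in the associated graded its leading term equals the product of the leading terms, which — since the $F$-part lies in the $\bar{\A}$-subalgebra $\tU^-_{\bar{\A}}$, the $E$-part in $\tU^+_{\bullet,\bar{\A}}$, and the torus contributes only powers of $q$, both carrying Lusztig's PBW $\bar{\A}$-bases (Proposition \ref{prop:PBW-QG}) — expands as an $\bar{\A}$-combination of leading terms of monomials of the same degree; subtracting the corresponding $\bar{\A}$-combination of monomials yields an element of $\dot{\U}^\imath_{\bar{\A}}$ of strictly smaller degree, and one finishes by induction, the base case (degree $0$) being Lusztig's integral PBW theorem for the modified quantum group of type $\I_\bullet$.

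The hard part is precisely this last step: one must control the passage between $\dot{\U}^\imath_{\bar{\A}}$ and its associated graded, i.e. ensure that $\gr\dot{\U}^\imath_{\bar{\A}}$ is exactly the $\bar{\A}$-span of the leading terms of the monomials, with no denominators creeping in at lower degrees when an integral element is expressed through the mixed-degree generators of Proposition \ref{prop:integralformsequiv} (so that the cancellations in such an expression are controlled). Once that compatibility is secured the downward induction closes the argument; the ingredients it rests on are Proposition \ref{prop:integralformsequiv}, Lusztig's integral PBW theory (Propositions \ref{prop:PBW-QG} and \ref{prop:PBWUdot}), and the integral stability of the braid symmetries (Theorem \ref{prop:integral-braid}, Proposition \ref{prop:Tblack}).
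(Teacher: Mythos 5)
Your part (i) matches the paper: integrality of the monomials follows from Lemma \ref{lem:div-integral}, Theorem \ref{prop:integral-braid}, and Proposition \ref{prop:Tblack}. Your part (ii) is also fine (the paper cites Proposition \ref{prop:PBW-Uifiled} more tersely, but your triangularity argument is the correct elaboration). The problem is part (iii), and you flag it yourself: trying to prove that the $\bar{\A}$-span $\mathcal{L}$ is an $\bar{\A}$-subalgebra and then invoking Proposition \ref{prop:integralformsequiv} requires controlling the lower-degree corrections that appear when a product is re-expanded in the PBW basis, and you cannot bound those corrections by an $\bar{\A}$-combination without already knowing the integrality you are trying to prove. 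The downward induction as you describe it is circular, and the ``hard part'' you identify is a genuine gap, not a missing detail.

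The paper's spanning argument is structurally different and avoids this entirely. Rather than working inside $\dot{\U}^\imath$ and its associated graded, it tests integrality in the ambient $\dot{\U}_{\bar{\A}}$: given $x\in\dot{\U}^\imath_{\bar{\A}}\mathbf{1}_\zeta$, expand $x=\sum_{b\in\mathcal{B}(\zeta)}c_b\,b$ over $\Q(v)$ via Proposition \ref{prop:PBW-Uifiled}; for $\lambda\in X$ with $\bar\lambda=\zeta$, Lemma \ref{lem:UidotA} and Definition \ref{def:integral-Uidot} give $x\mathbf{1}_\lambda\in\dot{\U}_{\bar{\A}}$, where Lusztig's integral PBW basis (Proposition \ref{prop:PBWUdot}) is already available. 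By Lemma \ref{lem:leading-div} the leading term of $b\mathbf{1}_\lambda$, with respect to weight, is $F^{(\ba)}F_\bullet^{(\bc)}E_\bullet^{(\bd)}\mathbf{1}_\lambda$. Choosing $b$ of maximal weight among those with $c_b\neq 0$, that PBW element can only receive the coefficient $c_b$ in the $\dot{\U}_{\bar{\A}}$-expansion of $x\mathbf{1}_\lambda$, so $c_b\in\bar{\A}$ by uniqueness; subtracting $c_b b$ and inducting on weight finishes. Your circularity vanishes because at each inductive step one only reads off a single top-weight coefficient in a \emph{fixed} integral basis of the larger algebra, never needing to resolve lower-order error terms in a multiplication. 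This also shows Proposition \ref{prop:integralformsequiv} is not needed for spanning (it is used only inside the proof of Theorem \ref{prop:integral-braid}).
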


\begin{proof}
From Theorem \ref{prop:integral-braid} and Lemma \ref{lem:div-integral}, we have $B^{(\ba)}\mathbf{1}_\zeta\in\dot{\U}^{\imath}_{\A}$ for any $\ba$. Then the monomials $B^{(\ba)} F_\bullet^{(\bc)} E_\bullet^{(\bd)}\mathbf{1}_\zeta$ ($\ba\in \N^{\ell(\bbw_0)},\bc,\bd\in\N^{r}$, $\zeta\in X_\imath$) are in $\dot{\U}^{\imath}_{\A}$, and they are linearly independent by Proposition \ref{prop:PBW-Uifiled}. 

Let $\zeta\in X_\imath$. We show that the set of monomials 
\begin{align}
\label{def:cBzeta}
\mathcal{B}(\zeta)=\{B^{(\ba)} F_\bullet^{(\bc)} E_\bullet^{(\bd)}\mathbf{1}_\zeta\mid\ba\in \N^{\ell(\bbw_0)},\bc,\bd\in\N^{|\cR_\bullet^+|}\}
\end{align}
spans $\dot{\U}^\imath_{\bar{\A}}\mathbf{1}_\zeta$. Let $x\in \dot{\U}^\imath_{\bar{\A}}\mathbf{1}_\zeta$. By Proposition~\ref{prop:PBW-Uifiled}, we can write $x=\sum_{b\in \mathcal{B}(\zeta)} c_b b$ for $c_b\in \Q(v)$. By Lemma~\ref{lem:UidotA}, for $\lambda\in X$, we have $x\mathbf{1}_\lambda, b\mathbf{1}_\lambda\in \dot{\U}_{\bar\A}$. Note that $x\mathbf{1}_\lambda\neq0$ if and only if $\bar{\lambda}=\zeta$; hence, we assume $\bar{\lambda}=\zeta$ in the following arguments.

Let $b\in\mathcal{B}(\zeta)$ be a monomial such that $c_b\neq0$ and its weight is maximal among all $b'$ such that $c_b'\neq 0$. We show that $c_b\in \bar{\A}$. Write $b=B^{(\ba)} F_\bullet^{(\bc)}E_\bullet^{(\bd)}\mathbf{1}_\zeta$ for some $\ba,\bc,\bd$. By Lemma~\ref{lem:leading-div}, the leading term of $B^{(\ba)} F_\bullet^{(\bc)} E_\bullet^{(\bd)}\mathbf{1}_\lambda$ is $F^{(\ba)} F_\bullet^{(\bc)} E_\bullet^{(\bd)}\mathbf{1}_\lambda$. i.e.,
\begin{align}
    B^{(\ba)} F_\bullet^{(\bc)} E_\bullet^{(\bd)}\mathbf{1}_{\lambda}
    =F^{(\ba)} F_\bullet^{(\bc)} E_\bullet^{(\bd)}\mathbf{1}_\lambda+\text{ lower terms},
\end{align}
where the lower terms is an $\bar{\A}$-linear combination of elements in the PBW basis of $\dot{\U}_{\bar\A}\mathbf{1}_\lambda$ with strictly lower weights.
On the other hand, we can write $x\mathbf{1}_\lambda$ uniquely as an $\bar{\A}$-linear combination of the PBW basis for $\dot{\U}_{\bar\A}\mathbf{1}_\lambda$ (see Proposition~\ref{prop:PBWUdot}); by the weight reason, the coefficient of $F^{(\ba)}F_\bullet^{(\bc)} E_\bullet^{(\bd)}\mathbf{1}_\lambda$ in this linear combination of $x\mathbf{1}_\lambda$ must equal $c_b$. By the uniqueness, we must have $c_b\in \bar{\A}$ for the chosen $b$. 

We now repeat the above argument for $x-c_b b$. By induction, we conclude that $c_b\in \bar{\A}$ for all $b\in \mathcal{B}(\zeta)$. Therefore, any element in $\dot{\U}^\imath_{\A}$ can be written as a linear combination of $\mathcal{B}(\zeta)$ as desired.
\end{proof}

\begin{corollary}
Let $\bvs=(\vs_i)_{i\in\I_\circ}$ be parameters such that $\vs_i$ has the form $ \vs_i =q^{a_i}\vs_{\diamond,i},a_i\in\Z$ for any $i\in\I_\circ$. Then $\dot{\U}^\imath_{\bar{\A}}$ is a free $\bar{\A}$-module.
\end{corollary}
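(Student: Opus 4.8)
The plan is to deduce the statement immediately from Theorem~\ref{thm:PBWUidot}. Under the standing hypothesis $\vs_i=q^{a_i}\vs_{\diamond,i}$ with $a_i\in\Z$, that theorem produces an explicit $\bar{\A}$-basis of $\dot{\U}^\imath_{\bar{\A}}$, namely the monomials $B^{(\ba)}F_\bullet^{(\bc)}E_\bullet^{(\bd)}\mathbf{1}_\zeta$ indexed by $\ba\in\N^{\ell(\bbw_0)}$, $\bc,\bd\in\N^{|\cR^+_\bullet|}$, and $\zeta\in X_\imath$. Since a module that admits a basis is by definition free, $\dot{\U}^\imath_{\bar{\A}}$ is a free $\bar{\A}$-module, with this set of monomials as a free generating set; no further argument is needed.

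For the record, the mathematical content lies entirely in Theorem~\ref{thm:PBWUidot} and not in the corollary. That theorem is assembled from three ingredients: (i) the integrality $B_\beta^{(m)}\mathbf{1}_\zeta\in\dot{\U}^\imath_{\bar{\A}}$ for $\beta\in\cR^+(\bbw_0)$, which in rank $1$ is Lemma~\ref{lem:div-integral} and is propagated to higher rank through the braid-group integrality Theorem~\ref{prop:integral-braid} applied to \eqref{def:Bbeta-div}; (ii) the leading-term computation of Lemma~\ref{lem:leading-div}, giving $B^{(\ba)}F_\bullet^{(\bc)}E_\bullet^{(\bd)}\mathbf{1}_\lambda = F^{(\ba)}F_\bullet^{(\bc)}E_\bullet^{(\bd)}\mathbf{1}_\lambda + (\text{lower terms})$ inside $\dot{\U}_{\bar{\A}}$; and (iii) the weight-triangularity argument that, comparing against Lusztig's PBW basis of $\dot{\U}_{\bar{\A}}$ (Proposition~\ref{prop:PBWUdot}) one weight at a time, forces the expansion coefficients of an arbitrary element of $\dot{\U}^\imath_{\bar{\A}}$ in the set $\mathcal{B}(\zeta)$ to lie in $\bar{\A}$. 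Linear independence over $\bar{\A}$ is inherited from the $\bF$-linear independence in Proposition~\ref{prop:PBW-Uifiled}.

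Accordingly, I expect no obstacle at the level of the corollary itself; the only nontrivial point is the one already settled in Theorem~\ref{thm:PBWUidot}, namely simultaneously controlling the integrality and the spanning property of the candidate monomials. An alternative route --- trying to argue freeness abstractly from $\dot{\U}^\imath_{\bar{\A}}$ being a submodule of the free module $\dot{\U}_{\bar{\A}}$ --- does not work directly, since $\bar{\A}=\Z[q^{1/2},q^{-1/2}]$ is two-dimensional and submodules of free modules over it need not be free; exhibiting an actual basis, as in Theorem~\ref{thm:PBWUidot}, is genuinely the right approach.
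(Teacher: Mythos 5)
Your proposal is correct and matches the paper's intent exactly: the corollary is an immediate consequence of Theorem~\ref{thm:PBWUidot}, since exhibiting an explicit $\bar{\A}$-basis is precisely exhibiting freeness. Your side remark that one cannot instead argue abstractly via submodules of the free module $\dot{\U}_{\bar{\A}}$ (because $\bar{\A}=\Z[q^{1/2},q^{-1/2}]$ is not a PID) is also accurate and correctly identifies why the explicit basis is the right tool.
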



\subsection{The parabolic subalgebra $\bP$}

Let $\bP=\bP_{\I_\bullet}$ be the $\bF$-subalgebra of $\U_\bF$ generated by $\U_{\I_\bullet}$ and $\U^-$. The algebra $\dot{\bP}=\bigoplus_{\lambda\in X} \bP\mathbf{1}_\lambda$ as a subalgebra of $\dot{\U}$, and its $\A$-form is $\dot{\bP}_\A=\dot{\bP}\cap\dot{\U}_\A$. Similarly, we can define the $\bar{\A}$-form $\dot{\bP}_{\bar{\A}}$.

Note that $\dot{\bP}\cong \dot{\U}/\big(\sum_{x,\lambda\in X}\dot{\U}x\mathbf{1}_\lambda\big)$, where the sum is taken over all homogeneous $x\in\U^+$ whose weights are of the form $|x|=\sum_{i\in\I}a_ii$ with $a_i\neq0$ for some $i\in\I_\circ$. Thus we can view $\dot{\bP}$ as a left $\dot{\U}$-module, and then as a left $\dot{\U}^\imath$-module. 

For $\lambda\in X$, denote by $p_\bvs=p_{\bvs,\lambda}$ the composition map 
\begin{align*}
\dot{\U}^\imath\mathbf{1}_{\ov{\lambda}}\longrightarrow \dot{\U}\mathbf{1}_\lambda\longrightarrow \dot{\U}\mathbf{1}_\lambda/\big(\sum_{x,\lambda\in X}\dot{\U}x\mathbf{1}_\lambda\big)\longrightarrow \dot{\bP}\mathbf{1}_\lambda.
\end{align*}

\begin{lemma}[\text{cf. \cite[Lemma 3.22]{BW18b}}]
Let $\lambda\in X$. The map $p_\bvs=p_{\bvs,\lambda}:\dot{\U}_\bvs\mathbf{1}_{\ov{\lambda}}\rightarrow \dot{\bP}\mathbf{1}_\lambda$
is an isomorphism of left $\dot{\U}^\imath$-modules. Moreover, $p_\bvs$ restricts to an injective homomorphism $p_\bvs:\dot{\U}^\imath_{\bvs,\bar{\A}}\mathbf{1}_{\ov{\lambda}}\rightarrow\dot{\bP}_{\bar{\A}}\mathbf{1}_\lambda$ of $\bar{\A}$-modules.
\end{lemma}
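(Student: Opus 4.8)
The plan is to mirror the strategy of \cite[Lemma 3.22]{BW18b}, adapting it to the present setting where the ground ring is $\bar{\A}$ and the parameters are the more general ones allowed here. First I would establish that $p_\bvs$ is an isomorphism of left $\dot{\U}^\imath$-modules over the field $\bF$. Recall that $\dot{\bP}\mathbf{1}_\lambda \cong \dot{\U}\mathbf{1}_\lambda/\big(\sum_{x}\dot{\U}x\mathbf{1}_\lambda\big)$, so as an $\bF$-vector space $\dot{\bP}\mathbf{1}_\lambda$ has a basis given by $F^{(\ba)}F_\bullet^{(\bc)}E_\bullet^{(\bd)}\mathbf{1}_\lambda$ (the PBW basis for $\dot{\U}^-\mathbf{1}_\lambda$ together with the $E$-part supported on $\I_\bullet$, using the reduced expression $w_0 = s_{k_1}\cdots s_{k_t}s_{j_1}\cdots s_{j_r}$ fixed above). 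On the other hand, Proposition~\ref{prop:PBW-Uifiled} gives that $\dot{\U}^\imath\mathbf{1}_{\ov\lambda}$ has the $\bF$-basis $B^\ba F_\bullet^{\bc}E_\bullet^{\bd}\mathbf{1}_{\ov\lambda}$. By Lemma~\ref{lem:leadingPBW} (and Corollary~\ref{cor:Ba}), the leading term of $B^\ba F_\bullet^{\bc}E_\bullet^{\bd}$ is $F^\ba F_\bullet^{\bc}E_\bullet^{\bd}$, so the matrix expressing $p_\bvs(B^\ba F_\bullet^{\bc}E_\bullet^{\bd}\mathbf{1}_{\ov\lambda})$ in terms of the basis $F^{(\ba)}F_\bullet^{(\bc)}E_\bullet^{(\bd)}\mathbf{1}_\lambda$ is ``unitriangular'' with respect to the weight filtration (higher-weight terms lie in the kernel being quotiented out, so they do not interfere, and the leading term survives). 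Hence $p_\bvs$ sends a basis to a basis, and is an $\bF$-linear isomorphism; its $\dot{\U}^\imath$-linearity is immediate from the construction as a composition of $\dot{\U}$-module (hence $\dot{\U}^\imath$-module) maps.

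Next I would address integrality: that $p_\bvs$ restricts to $p_\bvs:\dot{\U}^\imath_{\bvs,\bar{\A}}\mathbf{1}_{\ov\lambda}\to \dot{\bP}_{\bar{\A}}\mathbf{1}_\lambda$. For one inclusion, note that $p_\bvs$ is the composition $\dot{\U}^\imath\mathbf{1}_{\ov\lambda}\hookrightarrow \dot{\U}\mathbf{1}_\lambda \twoheadrightarrow \dot{\bP}\mathbf{1}_\lambda$; by Lemma~\ref{lem:UidotA}, an element $u\in\dot{\U}^\imath_{\bvs,\bar{\A}}\mathbf{1}_{\ov\lambda}$ satisfies $u = u\mathbf{1}_\lambda \in \dot{\U}_{\bar{\A}}\mathbf{1}_\lambda$ (here I use $\bar\lambda = \ov\lambda$ so that $u\mathbf{1}_\lambda = u$), and the quotient map $\dot{\U}\mathbf{1}_\lambda\to\dot{\bP}\mathbf{1}_\lambda$ carries $\dot{\U}_{\bar{\A}}\mathbf{1}_\lambda$ onto $\dot{\bP}_{\bar{\A}}\mathbf{1}_\lambda$ (since $\dot{\bP}_{\bar{\A}} := \dot{\bP}\cap\dot{\U}_{\bar{\A}}$, this needs a small check: the PBW basis of $\dot{\bP}_{\bar{\A}}\mathbf{1}_\lambda$ is a subset of the PBW basis of $\dot{\U}_{\bar{\A}}\mathbf{1}_\lambda$, and the quotient kills exactly the complementary basis vectors, so the image of $\dot{\U}_{\bar{\A}}\mathbf{1}_\lambda$ is precisely $\dot{\bP}_{\bar{\A}}\mathbf{1}_\lambda$). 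Therefore $p_\bvs(u)\in\dot{\bP}_{\bar{\A}}\mathbf{1}_\lambda$, giving the well-definedness of the restriction. Injectivity of the restriction is inherited from injectivity of $p_\bvs$ over $\bF$.

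Finally, I would confirm that the restricted map is a homomorphism of $\bar{\A}$-modules, which is automatic since $p_\bvs$ is $\bF$-linear and both $\dot{\U}^\imath_{\bvs,\bar{\A}}\mathbf{1}_{\ov\lambda}$ and $\dot{\bP}_{\bar{\A}}\mathbf{1}_\lambda$ are $\bar{\A}$-submodules of the respective $\bF$-spaces. The main obstacle I anticipate is the careful bookkeeping in the first paragraph: one must check that the weight filtration argument genuinely gives that $p_\bvs$ is bijective, i.e., that the ``lower/higher weight'' terms produced by Lemma~\ref{lem:leadingPBW} do not obstruct unitriangularity after passing to the quotient $\dot{\bP}\mathbf{1}_\lambda$ — in particular that the error terms either survive consistently or are killed, so that the change-of-basis matrix is invertible over $\bF$. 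A secondary technical point is justifying that the quotient map sends $\dot{\U}_{\bar{\A}}\mathbf{1}_\lambda$ exactly onto $\dot{\bP}_{\bar{\A}}\mathbf{1}_\lambda$ rather than merely into it; this follows from Proposition~\ref{prop:PBWUdot} together with the explicit description of $\dot{\bP}$ as the quotient killing the $\U^+$-part supported off $\I_\bullet$, but it should be spelled out.
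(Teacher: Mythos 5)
Your approach is essentially the one the paper intends: the paper states this lemma as a ``cf.'' to \cite{BW18b} with no standalone proof, but the triangularity computation you invoke is precisely what is spelled out in the proof of Proposition~\ref{prop:UiPintegral} (see \eqref{eq:pbvs}). The one point that needs sharpening --- and which you rightly flag as a concern --- is the parenthetical ``higher-weight terms lie in the kernel being quotiented out, so they do not interfere.'' That is not literally what happens. The error terms in $B_\beta-F_\beta$ involve factors like $\widetilde{T}_{w_\bullet}(E_{\tau i})K_i'$; in $\dot{\bP}\mathbf{1}_\lambda$ these are \emph{not} all killed, since commuting the $E$'s past the $F$'s produces lower-order contributions that do survive in $\dot{\bP}\mathbf{1}_\lambda$. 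The correct statement is that every surviving correction is a combination of terms $F^{(\ba')}F_\bullet^{(\bc')}E_\bullet^{(\bd')}\mathbf{1}_\lambda$ with $\sum a'_\beta<\sum a_\beta$, so the change-of-basis matrix is triangular with nonzero diagonal for the filtration by total $\cR^+(\bbw_0)$-degree, not by the mere weight filtration. With that refinement your isomorphism argument is complete, and the integrality argument via Lemma~\ref{lem:UidotA} is correct as written.
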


It was shown in \cite[lemma 6.22]{BW18b} that $p_\bvs:\dot{\U}^\imath_{\bvs,\bar{\A}}\mathbf{1}_{\ov{\lambda}}\rightarrow\dot{\bP}_{\bar{\A}}\mathbf{1}_\lambda$ is an $\bar{\A}$-module isomorphism for a class of parameters (see \cite[\S 3.4]{BW18b}); the assumption on parameters made {\em loc. cit.} were used to ensure the existence of the $\imath$-canonical basis, which is essential in their proof. 

Using the integral PBW basis established in Theorem~\ref{thm:PBWUidot}, we can show that $p_\bvs:\dot{\U}^\imath_{\bvs,\bar{\A}}\mathbf{1}_{\ov{\lambda}}\rightarrow\dot{\bP}_{\bar{\A}}\mathbf{1}_\lambda$ is an $\bar{\A}$-module isomorphism for a new class of parameters in the next proposition.

\begin{proposition}
\label{prop:UiPintegral}
Let $\bvs=(\vs_i)_{i\in\I_\circ}$ be parameters such that $\vs_i$ has the form $ \vs_i =q^{a_i}\vs_{\diamond,i},a_i\in\Z$ for any $i\in\I_\circ$. Then $p_\bvs:\dot{\U}^\imath_{\bvs,\bar{\A}}\mathbf{1}_{\ov{\lambda}}\rightarrow\dot{\bP}_{\bar{\A}}\mathbf{1}_\lambda$ is an $\bar{\A}$-module isomorphism.
\end{proposition}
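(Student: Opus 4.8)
The map $p_\bvs:\dot{\U}^\imath_{\bvs,\bar{\A}}\mathbf{1}_{\ov{\lambda}}\rightarrow\dot{\bP}_{\bar{\A}}\mathbf{1}_\lambda$ is already known to be an injective $\bar{\A}$-module homomorphism by the previous lemma, so the whole point is surjectivity. The plan is to compare $\bar{\A}$-bases on both sides. On the source side, Theorem~\ref{thm:PBWUidot} gives the explicit PBW basis $\{B^{(\ba)} F_\bullet^{(\bc)} E_\bullet^{(\bd)}\mathbf{1}_{\ov{\lambda}}\}$ of $\dot{\U}^\imath_{\bar{\A}}\mathbf{1}_{\ov{\lambda}}$. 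On the target side, one knows from Lusztig's theory (Proposition~\ref{prop:PBWUdot} restricted to the parabolic, cf.\ \cite[\S23]{Lus93}) that $\dot{\bP}_{\bar{\A}}\mathbf{1}_\lambda$ has a PBW $\bar{\A}$-basis indexed by the same combinatorial data: namely $\{F^{(\ba)} F_\bullet^{(\bc)} E_\bullet^{(\bd)}\mathbf{1}_\lambda\}$, where $F^{(\ba)}$ runs over divided-power monomials in the root vectors $F_\beta$ for $\beta\in\cR^+(\bbw_0)$, and $F_\bullet^{(\bc)}, E_\bullet^{(\bd)}$ over the $\I_\bullet$-part. (This uses the decomposition $\cR^+=\cR^+(\bbw_0)\sqcup\cR^+_\bullet$ from Lemma~\ref{lem:positive-roots-rankone} together with the fact that $\dot\bP$ keeps all of $\U^-$ and only the $\I_\bullet$-part of $\U^+$.)

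\textbf{Key steps.} First I would set up the filtration/leading-term machinery: $\dot{\bP}_{\bar\A}\mathbf 1_\lambda$ carries a weight grading, and by Lemma~\ref{lem:leading-div} together with \eqref{def:Bbeta-div} and the integrality of the braid symmetries (Theorem~\ref{prop:integral-braid}), the image $p_\bvs\big(B^{(\ba)} F_\bullet^{(\bc)} E_\bullet^{(\bd)}\mathbf{1}_{\ov\lambda}\big)$ has leading term $F^{(\ba)} F_\bullet^{(\bc)} E_\bullet^{(\bd)}\mathbf{1}_\lambda$ plus a $\bar\A$-combination of strictly lower PBW monomials of the parabolic. (Here one must check that $p_\bvs$ intertwines the weight filtrations — this is immediate since $p_\bvs$ is a homomorphism of left $\dot\U^\imath$-modules and is induced by projection from $\dot\U\mathbf 1_\lambda$.) Second, this "unitriangularity" statement says precisely that $p_\bvs$ sends the PBW $\bar\A$-basis of the source to a family that is unitriangular with respect to the PBW $\bar\A$-basis of the target; hence the change-of-basis matrix is invertible over $\bar\A$, so $p_\bvs$ is surjective, and being already injective it is an isomorphism of $\bar\A$-modules. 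Third, I would record that this simultaneously gives $\dot{\bP}_{\bar\A}\mathbf 1_\lambda$ as a free $\bar\A$-module with the stated PBW basis, matching the earlier freeness corollary.

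\textbf{Main obstacle.} The delicate point is not the abstract triangularity argument but verifying that $\dot{\bP}_{\bar{\A}}\mathbf 1_\lambda$ genuinely has the PBW $\bar{\A}$-basis $\{F^{(\ba)} F_\bullet^{(\bc)} E_\bullet^{(\bd)}\mathbf 1_\lambda\}$ as an $\bar\A$-module — i.e.\ that Lusztig's integral PBW basis of $\dot\U_{\bar\A}\mathbf 1_\lambda$ (Proposition~\ref{prop:PBWUdot}, extended from $\A$ to $\bar\A$) descends correctly to the parabolic quotient and that the divided-power root vectors $F_\beta^{(\ba)}$ for $\beta\in\cR^+(\bbw_0)$ indeed lie in and together with the $\I_\bullet$-part span $\dot\bP_{\bar\A}\mathbf 1_\lambda$ over $\bar\A$. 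This follows by taking the reduced expression $w_0 = s_{k_1}\cdots s_{k_t} s_{j_1'}\cdots s_{j_r'}$ of $w_0$ adapted to $\bbw_0$ and $w_\bullet$ (as in the proof of Theorem~\ref{thm:iPBW}), applying Proposition~\ref{prop:PBWUdot} for this expression, and then quotienting out the ideal $\sum_{x}\dot\U x\mathbf 1_\lambda$ with $x\in\U^+$ of weight involving $\I_\circ$ — which kills exactly the $E_\gamma^{(\bd)}$ attached to roots in $\cR^+(\bbw_0)$ and retains the rest; the leftover $F$'s and $\I_\bullet$-$E$'s give the claimed basis. Granting this, the remainder of the proof is the routine unitriangular matrix argument above, identical in spirit to the surjectivity step in the proof of Theorem~\ref{thm:PBWUidot}.
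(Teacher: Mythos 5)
Your proposal is correct and takes essentially the same route as the paper: compare the PBW $\bar{\A}$-basis $\{B^{(\ba)}F_\bullet^{(\bc)}E_\bullet^{(\bd)}\mathbf{1}_{\ov{\lambda}}\}$ of the source (Theorem~\ref{thm:PBWUidot}) with the parabolic PBW basis $\{F^{(\ba)}F_\bullet^{(\bc)}E_\bullet^{(\bd)}\mathbf{1}_\lambda\}$ of the target, use the leading-term identity from Lemma~\ref{lem:leading-div} to establish unitriangularity, and conclude that $p_\bvs$ carries a basis to a basis, hence is an isomorphism. The extra care you take in justifying that $\dot{\bP}_{\bar{\A}}\mathbf{1}_\lambda$ genuinely carries the stated PBW $\bar{\A}$-basis via the adapted reduced expression is compressed in the paper to a citation of Proposition~\ref{prop:PBWUdot}, but it is the same fact.
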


\begin{proof}
It is known that $\dot{\bP}\mathbf{1}_\lambda$ admits a PBW basis of the form $F^{(\ba)}F_\bullet^{(\bc)}E_\bullet^{(\bd)}\mathbf{1}_\lambda$ ($\ba\in \N^{\ell(\bbw_0)},\bc,\bd\in\N^{r}$); cf. Proposition \ref{prop:PBWUdot}. By Lemma \ref{lem:leading-div}, the leading term of $B_\beta^{(m)}$ is $F_\beta^{(m)}$ for any $\beta\in\cR^+(\bbw_0)$, and then the leading term of $B^{(\ba)}$ is $F^{(\ba)}$ for $\ba\in \N^{\ell(\bbw_0)}$. It follows that 
\begin{align}
\label{eq:pbvs}
    p_\bvs(B^{(\ba)} F_\bullet^{(\bc)} E_\bullet^{(\bd)}\mathbf{1}_{\ov{\lambda}})=F^{(\ba)} F_\bullet^{(\bc)} E_\bullet^{(\bd)}\mathbf{1}_\lambda+\text{ lower terms}.
\end{align}
Here the ``lower terms'' are linear combinations of elements of the form 
$F^{(\ba')}F_\bullet^{(\bc')}E_\bullet^{(\bd')}\mathbf{1}_\lambda$, where $\sum_{\beta\in\cR^+(\bbw_0)} a'_\beta<\sum_{\beta\in\cR^+(\bbw_0)} a_\beta$. 

Let $\mathcal{B}(\ov{\lambda})$ be the set defined in \eqref{def:cBzeta}. By Theorem~\ref{thm:PBWUidot}, $\mathcal{B}(\ov{\lambda})$ forms a basis for $\dot{\U}^\imath_{\bvs,\bar{\A}}\mathbf{1}_{\ov{\lambda}}$. By \eqref{eq:pbvs}, the $p_\bvs$-image of $\mathcal{B}(\ov{\lambda})$ forms a basis for $\dot{\bP}\mathbf{1}_\lambda$. Since $p_\bvs$ sends the basis $\mathcal{B}(\ov{\lambda})$ of $\dot{\U}^\imath_{\bvs,\bar{\A}}\mathbf{1}_{\ov{\lambda}}$ to a basis of $\dot{\bP}_{\bar{\A}}\mathbf{1}_\lambda$, the map $p_\bvs$ is an isomorphism of $\bar{\A}$-modules.
\end{proof}

\end{document}